\newcommand{\compactlist}{\begin{list}{$\bullet$}{\setlength{\leftmargin}{1em}}}
\def\zz{{\bf Z}}
\def\ff{{\bf F}}
\def\qq{{\bf Q}}
\def\co{\colon\thinspace}
\def\cs{\mathop{\#}}
\def\caln{\mathcal{N}}
\def\calc{\mathcal{C}}
\def\calm{\mathcal{M}}
\def\calg{\mathcal{G}}
\def\calk{\mathcal{K}}
\def\calp{\mathcal{P}}
\newcommand{\spinc}{\ifmmode{{\mathfrak s}}\else{${\mathfrak s}$\ }\fi}
\newcommand{\spinct}{\ifmmode{{\mathfrak t}}\else{${\mathfrak t}$\ }\fi}
\newcommand{\Spc}{Spin$^c$}
\newcommand{\fig}[2] { \includegraphics[scale=#1]{#2} }
\newtheorem{theorem}{Theorem}
\newtheorem{lemma}[theorem]{Lemma}
\newtheorem{corollary}[theorem]{Corollary}
\newtheorem{prop}[theorem]{Proposition}
\theoremstyle{definition}
\newtheorem{definition}[theorem]{Definition}
\numberwithin{equation}{section}
\begin{document}
\title[Non-splittability of $\qq$--homology cobordism]{Non-splittability of the rational homology cobordism group of 3--manifolds}
\author{Se-Goo Kim}\author{Charles Livingston}
\thanks{This work was supported in part by the National Science Foundation under Grant 1007196 and by the National Research Foundation of Korea (NRF) grant funded by the Korea government (MEST) No.~2011--0012893. \\ \today}

\address{Se-Goo Kim: Department of Mathematics and Research Institute for Basic Sciences, Kyung Hee University, Seoul 130--701, Korea }
\email{sgkim@khu.ac.kr}
\address{Charles Livingston: Department of Mathematics, Indiana University, Bloomington, IN 47405 }
\email{livingst@indiana.edu}

%%%%%%%ABSTRACT%%%%%%%%%%%%%%

\begin{abstract}  Let $\zz[1/p]$ denote the ring of integers with the prime $p$ inverted.  There  is a canonical homomorphism 
$\Psi\co \hskip-.07in \oplus  \Theta^3_{\zz[1/p]}  \to \Theta^3_{\qq}$, where   $\Theta^3_{R}$ denotes the  three-dimensional smooth $R$--homology cobordism group of $R$--homology spheres and the direct sum is over all prime integers.   Gauge theoretic  methods prove the kernel is infinitely generated.  Here we prove that $\Psi$ is not surjective, with cokernel infinitely generated.  
As a basic example we show that for $p$ and $q$ distinct primes,  there is no rational homology cobordism from the lens space  $L(pq,1) $ to any  $M_p \cs M_q$, where $H_1(M_p) = \zz_{p}$ and $H_1(M_q) = \zz_{q}$.  More subtle examples  include cases in which a cobordism to  such a connected sum exists topologically but not smoothly.  (Conjecturally, such a splitting always exists topologically.)  Further examples can be chosen to represent 2--torsion in $\Theta^3_{\qq}$.   

Let $\calk$ denote the kernel  of $\Theta^3_\qq \to \widehat{\Theta}^{3}_\qq$, where  $\widehat{\Theta}^{3}_\qq$ denotes the topological homology cobordism group.  Freedman proved that $\Theta^3_\zz \subset \calk$.  A corollary of results here is that $\calk / \Theta^3_\zz $ is infinitely generated.  We also  demonstrate the failure in dimension three of splitting theorems that apply to higher dimensional knot concordance groups. \end{abstract}

\maketitle

%%%%%%%SECTION%%%%%%%%%%%%%%
\section{Introduction.}\label{sectionintroduction}
In~\cite{furuta}, Furata applied instanton theory to  reveal unexpectedly deep structure in the homology cobordism group of smooth homology 3--spheres, $\Theta^3_\zz$.  Here we will use the added algebraic structures associated to Heegaard--Floer theory to identify further complications in the rational cobordism group, $\Theta^3_\qq$.

As a simple example, an application of Lisca's rational homology cobordism classification of lens spaces~\cite{lisca}  implies that for $p$ and $q$ relatively prime, the lens space $L(pq,1)$ is not $\qq$--homology cobordant to any connected sum $L(p,a) \# L(q,b)$.  A simple consequence of the work here is that $L(pq,1)$ is not $\qq$--homology cobordant to any connected sum $M_p \#M_q$ where $H_1(M_p) = \zz_p$ and $H_1(M_q) = \zz_q$.

We let $\Theta^3_R$ denote the $R$--homology cobordism group of three-dimensional $R$--homology spheres.  Note that $\Theta^3_{\zz[1/p]}$ is generated by three-manifolds $M$ with $H_1(M)$ $p$--torsion.  There is a canonical map $$\Phi \co  \oplus_{p \in \calp} \Theta^3_{\zz[1/p]} \to \Theta^3_{\qq}.$$
Rochlin's Theorem and Furuta's result imply that the kernel of $\Phi$ is infinitely generated.  Our main result is the following:\vskip.05in

\noindent{\bf Proposition.} {\sl The cokernel of $\Phi$,   $  \Theta^3_{\qq}/ \Phi(\oplus_{p \in \calp} \Theta^3_{\zz[1/p]}) $,  contains an infinite free subgroup generated by lens spaces of the form $L(pq,1)$ and infinite two-torsion, generated by lens spaces of the form $L(4n^2+1, 2n)$.  An infinite subgroup is also generated by three-manifolds that bound $\qq$--homology balls topologically. } \vskip.05in

 We also present applications to the study of knot concordance and present  families of elements in the kernel $\Theta^3_\qq / \Theta^3_\zz \to \widehat{\Theta}^3_\qq$, where  $\widehat{\Theta}^3_\qq$ denotes the topological cobordism group.  Similar examples were presented in~\cite{hlr}, with the additional condition that bordisms were assumed to be Spin.

An important perspective is provided by considering the torsion linking form of three-manifolds, which yields a homomorphism $\Theta^3_\qq \to W(\qq /\zz)$, the Witt group of nonsingular $\qq/\zz$--valued linking forms on finite abelian groups.  According to~\cite{kk}  this homomorphism is surjective.  Again by Rochlin's theorem and Furuta's result, it has infinitely generated kernel (in the topological category it is conjecturally an isomorphism).  A basic result of Witt theory is that $W(\qq/\zz)$ splits into primary components, $\oplus_{p \in \calp} W(\ff_p) \xrightarrow{\cong} W(\qq/\zz)$, where $W(\ff_p)$ is the Witt group of linking forms of $\ff_p$--vector spaces and $\calp$ is the set of prime integers.  The conjecture that topological cobordism is determined by the linking form implies that $\widehat{\Theta}^3_\qq $ has a corresponding primary decomposition.  One thrust of our work here is to display the extent of the failure of the existence of  such a primary decomposition in the smooth setting.

The following commutative diagram organizes the groups of interest.  In the diagram, hats denote the topological category and $\calk$ denotes the kernel of the canonical homomorphism from the smooth to the topological $\qq$--homology cobordism group.  With the exception of the inclusion of the kernel, all horizontal arrows are surjective. Conjecturally, the right square consists of isomorphisms.
\vskip.1in

\begin{center}
$\begin{diagram}
\dgARROWLENGTH=1.4em
\node{  }  \node{ \oplus_{p \in \calp} \Theta^3_{\zz[1/p]}} \arrow{e}\arrow{s,r}{\Phi}\node{ \oplus_{p \in \calp} \widehat{\Theta}^3_{\zz[1/p]}}\arrow{e}\arrow{s,r}{\widehat{\Phi}}\node{ \oplus_{p \in \calp} W(\ff_p)}\arrow{s,r}{\cong} \\
\node{\calk}\arrow{e}   \node{ \Theta^3_{\qq}}\arrow{e}  \node{ \widehat{\Theta}^3_{\qq}}\arrow{e} \node{ W(\qq/\zz)}
\end{diagram}
$
\end{center}
\vskip.1in

\noindent  The proposition above states that $\Theta^3_\qq / \text{Image} (\Phi)$ is infinitely generated  containing an infinite free subgroup and infinite two-torsion and that furthermore,  the image of $\calk$ in  $\Theta^3_\qq / \text{Image} (\Phi)$ similarly contains an infinite  subgroup.

\vskip.05in

\noindent {\bf Definition.}  A three manifold $M$ is said to {\em split} if it represents a  class in the image of $\Phi$.  That is, a manifold does not split if it is nontrivial in the cokernel of $\Phi$.
\vskip.05in
\noindent{\bf Outline}  In Sections~\ref{secdefinitions}, \ref{secmetabolizers} and \ref{spincsection}  we present some of the basic definitions used throughout the paper, isolate a basic result concerning metabolizers of linking forms, and discuss \Spc--structures.  
Section~\ref{secobstructions}  presents one of our main results, describing an obstruction based on Heegaard--Floer $d$--invariants  to a class in $\Theta^3_\qq$ being in the image of $\oplus_{p \in \calp} \Theta^3_{\zz[1/p]}$.\vskip.05in
Following this we provide a series of examples:
\begin{itemize}
\item  Section~\ref{basiclensspace} demonstrates that lens spaces $L(pq, 1)$  with $p$ and $q$ square free and relatively prime do not split, and  extends this to finite connected  sums  of such  lens spaces, with all $p$ and $q$ distinct, thus proving that $  \Theta^3_{\qq}/ \Phi(\oplus_{p \in \calp} \Theta^3_{\zz[1/p]}) $ is infinite. Section~\ref{infiniteorderlens} further  extends this, demonstrating that the set of lens spaces of the form $L(pq,1)$ (with $p$ and $q$ now required to be prime) generate an infinite free subgroup of infinite rank contained in $  \Theta^3_{\qq}/ \Phi(\oplus_{p \in \calp} \Theta^3_{\zz[1/p]})  $.  \vskip.05in 

\item Section~\ref{order2lensspacesec} considers specific lens spaces of the form $L(4n^2 +1, 2n)$ to provide elements of order 2 in  $\Theta^3_\qq$ that do not split, in particular showing that $  \Theta^3_{\qq}/ \Phi(\oplus_{p \in \calp} \Theta^3_{\zz[1/p]}) $ contains 2--torsion.  Section~\ref{inftwotor} expands on this, providing an infinite family of independent elements of order 2.\vskip.05in

\item  Section~\ref{secbasicexample}  begins the examination of the failure of splittings among manifolds that do split topologically;  that is, we consider manifolds representing classes in  $\calk$.  The main example  is built from surgery on the  connected sum of the torus knot $T_{3,5}$ and the untwisted Whitehead double of the trefoil knot, $Wh(T_{2,3}) = D$. We show that $S^3_{15}(T_{3,5} \cs D)$ splits topologically  but not smoothly.  Section~\ref{secexamplestopsplit1}  generalizes that example to an infinite family, using $(p, p+2)$ torus knots, with $p$ odd. \vskip.05in 

\item Section~\ref{concordance} applies the results of Section~\ref{basiclensspace} to demonstrate the failure of a splitting theorem for knot concordance which, by a result of Stoltzfus~\cite{stoltzfus}, applies algebraically and in dimensions greater than 3.\vskip.05in

\item According to the Freedman's work~\cite{fr, freedman-quinn}, all homology spheres bound contractible 4--manifolds topologically, so $\Theta^3_\zz \subset \calk$.  In Section~\ref{toptrivialbordism} we outline the proof  that the quotient $\calk / \Theta^3_\zz$ contains an infinitely generated free subgroup.  This was proved in~\cite{hlr} with the added constraint that one restricts the cobordism groups by considering only manifolds that are $\zz_2$--homology spheres  or by requiring that all spaces have Spin--structures.  We briefly indicate how results here permit  one to remove those restrictions in the argument in~\cite{hlr}.

\end{itemize}
\vskip.05in

\noindent{\it Acknowledgements.}  We are grateful for Matt Hedden's help in better understanding   Heegaard--Floer homology.   His results regarding the Heegaard--Floer theory of doubled knots is central here, and our specific  examples  are inspired by those that Matt pointed us toward in our  collaborations with him.   
%%%%%%%SECTION%%%%%%%%%%%%%%
\section{Definitions}\label{secdefinitions}
We will consider $\qq$--homology   3--spheres: these are closed 3--manifolds $M^3$ with $H_n(M^3, \qq) \cong H_n(S^3, \qq)$ for all $n$.  For each such $M$ there is a symmetric linking form $\beta \co H_1(M) \times H_1(M) \to \qq/\zz $ which is nonsingular in the sense that the induced map $\beta^* \co H_1(M) \to \text{Hom}(H_1(M), \qq/\zz)$ is an isomorphism.   If $M = \partial X^4$ where $X$ is a compact 4--manifold and  $H_n(X, \qq) = H_n(B^4, \qq)$ for all $n$, then the kernel $\calm$ of the map $H_1(M) \to H_1(X)$ is a metabolizer for $\beta$ (see~\cite{CG1}). That is, $\calm^\perp = \calm$, and in particular  $|\calm|^2 = |H_1(M)|$. The Witt group $W(\qq/\zz)$ is built from  the set of all pairs $(G, \beta)$ where $G$ is a finite abelian group and $\beta$ is a non-degenerate symmetric bilinear form taking values in $\qq/\zz$.  There is an equivalence relation on this set:  $(G, \beta)\sim  (G', \beta')$ if $(G \oplus G', \beta \oplus -\beta')$ has a metabolizer, and under this relation it becomes an abelian group under direct sum, denoted $W(\qq/\zz)$.  It can be proved (e.g.~\cite{ahv}) that a pair $(G, \beta)$ is Witt trivial if and only if it has a metabolizer.  The proof of this fact includes the following, which we will be using.
\begin{prop}\label{metasplitprop}  If $(G_1, \beta_1) \oplus (G_2, \beta_2)$ has metabolizer $\calm$ and $  (G_2, \beta_2)$ has metabolizer $\calm_2$, then 
$\calm_1 = \{ g \in G_1\ |\  (g,h) \in \calm \text{ for some } h \in \calm_2 \}$ is a metabolizer for $(G_1, \beta_1)$.
\end{prop}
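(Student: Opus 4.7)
The plan is to verify the two defining properties of a metabolizer for $(G_1, \beta_1)$: isotropy of $\calm_1$ under $\beta_1$, and the order equality $|\calm_1|^2 = |G_1|$. Combined with nondegeneracy of $\beta_1$ (which yields $|\calm_1|\cdot|\calm_1^\perp| = |G_1|$), these force $\calm_1 = \calm_1^\perp$. The isotropy step is immediate: given $g, g' \in \calm_1$ with witnesses $(g, h), (g', h') \in \calm$ and $h, h' \in \calm_2$, the isotropy of $\calm$ gives $\beta_1(g, g') + \beta_2(h, h') = 0$, while the isotropy of $\calm_2$ gives $\beta_2(h, h') = 0$, so $\beta_1(g, g') = 0$. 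That $\calm_1$ is itself a subgroup of $G_1$ follows directly from $\calm$ and $\calm_2$ being subgroups.

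The real work is the order count. Setting $N = \calm \cap (G_1 \oplus \calm_2)$, one sees $\calm_1 = \pi_1(N)$, where $\pi_1$ denotes projection to $G_1$, so $|\calm_1| = |N|/|N \cap (\{0\} \oplus G_2)|$. I would compute $|N|$ via the homomorphism $\calm \to G_2/\calm_2$ sending $(g, h) \mapsto h + \calm_2$, whose kernel is exactly $N$, obtaining
\[ |N| = \frac{|\calm|\cdot|\calm_2|}{|\pi_2(\calm) + \calm_2|}, \]
where $\pi_2$ denotes projection to $G_2$. For the denominator, self-duality of $\calm$ yields the identification $\{h \in G_2 : (0, h) \in \calm\} = \pi_2(\calm)^\perp$, so $N \cap (\{0\} \oplus G_2) \cong \pi_2(\calm)^\perp \cap \calm_2$, which equals $(\pi_2(\calm) + \calm_2)^\perp$ by the self-duality of $\calm_2$ and has order $|G_2|/|\pi_2(\calm) + \calm_2|$ by the nondegeneracy of $\beta_2$. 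The factor $|\pi_2(\calm) + \calm_2|$ cancels, leaving $|\calm_1| = |\calm|\cdot|\calm_2|/|G_2| = |G_1|^{1/2}$ after substituting $|\calm|^2 = |G_1|\cdot|G_2|$ and $|\calm_2|^2 = |G_2|$.

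The subtle point---and the main place where the metabolizer hypothesis on $\calm$ rather than mere isotropy is invoked---is the identification $\{h : (0, h) \in \calm\} = \pi_2(\calm)^\perp$. The inclusion $\subseteq$ is a direct consequence of the isotropy of $\calm$, but $\supseteq$ requires $\calm = \calm^\perp$: any $h$ such that $(0, h)$ pairs trivially with every element of $\calm$ must itself lie in $\calm$. Once this identification is in hand, the remainder of the argument is a routine counting.
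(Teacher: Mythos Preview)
Your argument is correct. The isotropy step is straightforward and handled cleanly, and the order computation is carried out accurately: the key identification $\{h : (0,h) \in \calm\} = \pi_2(\calm)^\perp$ does use the full self-perpendicularity of $\calm$, exactly as you flag, and the subsequent equality $\pi_2(\calm)^\perp \cap \calm_2 = (\pi_2(\calm) + \calm_2)^\perp$ correctly invokes $\calm_2 = \calm_2^\perp$. The cancellation then goes through to give $|\calm_1|^2 = |G_1|$, and together with isotropy and nondegeneracy this forces $\calm_1 = \calm_1^\perp$.

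As for comparison: the paper does not supply its own proof of this proposition. It is stated as a consequence embedded in the standard proof (cited to Alexander--Hamrick--Vick~\cite{ahv}) that Witt triviality is equivalent to the existence of a metabolizer, and is simply quoted for later use. Your write-up therefore goes beyond what the paper provides by giving a direct, self-contained verification. The approach you take---reducing to an order count and exploiting the duality identities $H^\perp{}^\perp = H$ and $|H|\,|H^\perp| = |G|$ for subgroups of a nondegenerate form---is the standard one and is essentially what one finds in the references.
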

The Witt groups $W(\qq / \zz, \left<p\right>)$ are defined as is $W(\qq / \zz)$, considering only $p$--torsion abelian groups, and the decomposition   $W(\qq/\zz)\cong \oplus_{p \in \calp }  W(\qq / \zz, \left< p \right>)  $ is easily proved.  The Witt group of non-degenerate symmetric forms on $\ff_p$--vector spaces is denoted $W(\ff_p)$. The inclusion  $W(\ff_p) \to  W(\qq / \zz, \left<p\right>)$ is an isomorphism.  In the proof of this, the inclusion is clearly injective, and an inverse map  $   W(\qq / \zz, \left<p\right>) \to W(\ff_p) $ is explicitly constructed via ``divessage''~\cite{ahv, mh}.

Let $R$ be a commutative ring.  Two closed 3--manifolds, $M_1$ and $M_2$, are called \emph{$R$--homology cobordant} if there is a compact smooth  4--manifold $X$ with boundary the disjoint union   $ M_1 \cup -M_2$ such that  the inclusions $H_*(M_i, R) \to H_*(X, R)$ are isomorphisms.  Equivalently, they are $R$--cobordant, written $M_1 \sim_R  M_2$, if $M_1 \cs - M_2$ bounds an $R$--homology 4--ball.  The set of $R$--cobordism classes of $R$--homology spheres  forms an abelian  group with operation induced  by connected sum.  This group is denoted  $\Theta^3_R$.

The ring $\zz[1/p]$ is the ring of integers with $p$ inverted, consisting of all rational numbers with denominators a power of   $p$.  A closed 
\hbox{3--manifold} $M$ is a $\zz[1/p]$--homology sphere if and only if $H_1(M)$ is $p$--torsion.
The linking form provides well-defined homomorphisms $\Theta^3_\qq \to  W(\qq / \zz)$ and $\Theta^3_{\zz[1/p]} \to  W(\ff_p)$ for which the following diagram commutes.

\begin{center}
$\begin{diagram}
\dgARROWLENGTH=1.5em
\node{ \oplus_{p \in \calp}  {\Theta}^3_{\zz[1/p]}} \arrow{e,t}{ } \arrow{s,r}{\Phi}\node{ \oplus_{p \in \calp} W(\ff_p)}\arrow{s,r}{\cong} \\
\node{  {\Theta}^3_{\qq}} \arrow{e,t}{}\node{ W(\qq/\zz)}\\
\end{diagram}
$
\vskip-.3in
$\ $
\end{center}
If we switch to the topological category, all these maps are conjecturally isomorphisms.
%%%%%%%SECTION%%%%%%%%%%%%%%
\section{Metabolizers for connected sums}\label{secmetabolizers}          
\subsection{Metabolizers}
If a connected sum of 3--manifolds bounds a rational homology ball, the associated metabolizer of the linking form does not necessarily split relative to the connected sum.  However, the existence of the connected sum decomposition does place constraints on the metabolizer.

\begin{theorem}\label{metabolizerthm2}  If $p$ is prime, $G$ is a finite abelian group, and a given  nonsingular linking form $\beta_1 \oplus \beta_2$ on $\zz_p \oplus G$ has metabolizer $\calm$, then for some $a \in G$, $(1,a) \in \calm$.
\end{theorem}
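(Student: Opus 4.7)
The plan is to exploit the prime-ness of $p$ together with the fact that the form splits as $\beta_1 \oplus \beta_2$, so that elements supported entirely in one factor are automatically orthogonal to elements supported in the other. Let $\pi_1 \co \zz_p \oplus G \to \zz_p$ denote the projection onto the first factor. The goal reduces to showing that $1 \in \pi_1(\calm)$, since then any preimage of $1$ in $\calm$ has the form $(1,a)$ for some $a \in G$.

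Since $\pi_1(\calm)$ is a subgroup of $\zz_p$ and $p$ is prime, there are only two possibilities: $\pi_1(\calm) = \zz_p$ (in which case we are done) or $\pi_1(\calm) = 0$. I would rule out the latter by a direct contradiction. If $\pi_1(\calm) = 0$, then $\calm \subseteq 0 \oplus G$, so every element of $\calm$ has the form $(0, y)$ for some $y \in G$. Because the linking form is the orthogonal sum $\beta_1 \oplus \beta_2$, we have
\[
(\beta_1 \oplus \beta_2)\bigl((1, 0), (0, y)\bigr) \;=\; \beta_1(1, 0) + \beta_2(0, y) \;=\; 0
\]
for every $(0, y) \in \calm$. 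Therefore $(1, 0) \in \calm^\perp$. Since $\calm$ is a metabolizer we have $\calm = \calm^\perp$, forcing $(1, 0) \in \calm$, which contradicts $\calm \subseteq 0 \oplus G$.

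Hence $\pi_1(\calm) = \zz_p$, and in particular $1 \in \pi_1(\calm)$, producing the required element $(1, a) \in \calm$.

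There is no real obstacle here; the only conceptual point is recognizing that the primality of $p$ makes the image $\pi_1(\calm)$ all-or-nothing, and that the splitting of the form lets elements of the form $(1, 0)$ automatically pair trivially with anything in $0 \oplus G$. The argument is short and self-contained, and it does not require any deeper Witt-theoretic input such as Proposition~\ref{metasplitprop}; it uses only the definition of a metabolizer.
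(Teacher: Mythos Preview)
Your proof is correct and is genuinely different from the paper's argument. The paper first passes to the $p$--primary part $\zz_p \oplus G_p$, obtains a metabolizer $\calm_p$ there, and then rules out $\calm_p \subset G_p$ by a counting argument: if $\calm_p$ were contained in $G_p$ it would be a metabolizer for $\beta_2|_{G_p}$, forcing $|G_p|$ to be a perfect square (an even power of $p$), while metabolicity of $\zz_p \oplus G_p$ forces $|G_p|$ to be an odd power of $p$. Your argument avoids both the restriction to the $p$--primary part and the order count: you simply observe that if $\calm \subset 0 \oplus G$ then $(1,0)$ lies in $\calm^\perp = \calm$, an immediate contradiction. This is shorter and uses only the definition $\calm = \calm^\perp$ together with the direct-sum splitting of the form; the paper's approach, by contrast, implicitly relies on the fact that $|\calm|^2$ equals the order of the ambient group. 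Either way the primality of $p$ enters only to upgrade ``nonzero first coordinate'' to ``first coordinate equal to $1$,'' and both proofs handle that step identically.
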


\begin{proof} Let $G_p$ denote the $p$--torsion in $G$.  There is a metabolizer   $\calm_p$ for the form restricted to $\zz_p \oplus G_p$.  If $\calm_p \subset G_p$, then it would represent a metabolizer for the linking form restricted to $G_p$, implying that the order of $G_p$ is an even power of $p$.  But since the form on    $\zz_p \oplus G_p$ is metabolic, the order of $G_p$ must be an odd power of $p$.  
It follows that there is an element $(a',a'') \in \calm_p$ with $a' \ne 0$.  Multiplying by $(a')^{-1} \mod p$, we see that $ (1,a) \in \calm_p \subset \calm$ for some $a \in G_p$.  
\end{proof}

In the following corollary, for each  integer $k$, $G_k$ denotes a finite abelian group  of order dividing a  power of $k$. 
\begin{corollary}\label{productthm} If $m$ is a square free integer, $G_m\oplus G_n$ is a finite abelian group with $\gcd(m,n)=1$,
%\marginpar{$G$ is replaced by $G_m\oplus G_n$}
and a given linking form  $\beta_1 \oplus \beta_2\oplus \beta_3$ on $\zz_m \oplus G_m\oplus G_n$ has metabolizer $\calm$, then for some $a \in G_m$, $(1,a,0) \in \calm$.
\end{corollary}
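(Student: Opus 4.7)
The plan is to reduce the statement to the prime case established in Theorem \ref{metabolizerthm2} by two successive decompositions of the metabolizer $\calm$: first peel off the $G_n$ factor using $\gcd(m,n)=1$, and then split what remains along the prime factorization of the square-free integer $m$ and apply Theorem \ref{metabolizerthm2} prime-by-prime.

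For the first reduction, set $H = \zz_m \oplus G_m$ (an $m$-torsion group) and view $\calm$ as a subgroup of $H \oplus G_n$. The key observation is that multiplication by $m$ annihilates $H$ but is an automorphism of $G_n$, so $\calm$ splits as $(\calm \cap H) \oplus (\calm \cap G_n)$. Indeed, for any $(h,g) \in \calm$ one has $(0, mg) = m(h,g) \in \calm$, and then writing $1 = am + bn$ with $ng = 0$ gives $(0, g) = a(0, mg) \in \calm$ and hence $(h, 0) \in \calm$ as well. A counting argument using $|\calm|^2 = |H| \cdot |G_n|$ together with the self-orthogonality inequalities $|\calm \cap H|^2 \le |H|$ and $|\calm \cap G_n|^2 \le |G_n|$ forces equality in both, so each is a metabolizer for the corresponding summand. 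Equivalently, once $\calm \cap G_n$ is identified as a metabolizer for $(G_n, \beta_3)$, Proposition \ref{metasplitprop} produces $\calm \cap H$ as a metabolizer for $(H, \beta_1 \oplus \beta_2)$.

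For the second reduction, write $m = p_1 \cdots p_k$ with the $p_i$ distinct primes. The Chinese Remainder Theorem gives $\zz_m \cong \bigoplus_i \zz_{p_i}$, and $G_m$ decomposes into primary components $G_m = \bigoplus_i G_m^{(p_i)}$, so the linking form on $H$ splits orthogonally as $\bigoplus_i (\zz_{p_i} \oplus G_m^{(p_i)}, \beta_1^{(p_i)} \oplus \beta_2^{(p_i)})$. Iterating the coprime argument of the previous paragraph, at each stage isolating a single prime $p_i$ against the product of the remaining ones, splits $\calm \cap H = \bigoplus_i \calm^{(p_i)}$ with each $\calm^{(p_i)}$ a metabolizer for its summand. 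Theorem \ref{metabolizerthm2} applied to each $\calm^{(p_i)}$ produces an $a_i \in G_m^{(p_i)}$ with $(1, a_i) \in \calm^{(p_i)}$. Assembling these across $i$ yields an element of $\calm \cap H$ whose $\zz_m$-coordinate corresponds under CRT to $1 \in \zz_m$ and whose $G_m$-coordinate is $a = a_1 + \cdots + a_k$; this element is $(1, a, 0) \in \calm$.

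The main technical obstacle is justifying that $\calm$ itself respects the coprime and primary direct sum decompositions. Self-orthogonal subgroups of linking forms need not split along arbitrary orthogonal decompositions, and what rescues the argument is that at every stage the two factors have coprime exponents, so one can exhibit the splitting explicitly by multiplying by an integer that annihilates one factor while acting invertibly on the other. Once that structural decomposition is in hand, the remainder is a clean reduction to the prime statement already proved.
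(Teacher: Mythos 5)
Your route is genuinely different from the paper's: you first split the metabolizer itself---peeling off $G_n$ and then decomposing $\calm\cap H$ into $p_i$-primary pieces, using the count $|\calm|^2=|H|\cdot|G_n|$ together with self-orthogonality to see that each piece is a metabolizer for its summand---and only then apply Theorem~\ref{metabolizerthm2} summand by summand. The paper instead applies Theorem~\ref{metabolizerthm2} directly to each $\zz_{p_i}$ sitting inside the full group (its proof already produces the element inside the $p_i$-torsion, which is why the $G_n$-coordinate comes out zero) and then combines the resulting elements via the Chinese Remainder Theorem. Your version is longer but more explicit about why the last coordinate can be taken to be $0$, a point the paper's two-line proof leaves implicit.

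There is, however, one step that is false as literally written: ``multiplication by $m$ annihilates $H$'' and ``$ng=0$''. By the convention stated before the corollary, $G_m$ and $G_n$ only have order dividing a \emph{power} of $m$ and of $n$; for instance with $m=3$ one could have $G_m=\zz_9$, and then $m(h,g)$ need not have first coordinate $0$, so the identity $(0,mg)=m(h,g)$ on which your splitting of $\calm$ rests fails. The repair is immediate: replace $m$ and $n$ by the exponents of $H=\zz_m\oplus G_m$ and of $G_n$ (or any powers $m^A$, $n^B$ annihilating them). These are still coprime, so writing $1=am^A+bn^B$ gives $(0,g)=am^A(h,g)\in\calm$ and hence $(h,0)\in\calm$, and the same adjustment (or simply the standard fact that a subgroup of a finite abelian group is the direct sum of its primary components) takes care of the prime-by-prime splitting in your second reduction. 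With that correction your argument is complete.
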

  
\begin{proof}Write $\zz_m = \zz_{p_1} \oplus \cdots \oplus \zz_{p_k}$. By Theorem~\ref{metabolizerthm2}, the projection of $\calm$ to each $\zz_{p_i}$ summand is surjective.  Since the $p_i$ are relatively prime, the projection to $\zz_m$ is similarly surjective.
\end{proof}
In order to construct elements of infinite order, we will need to consider multiples of linking forms.  Without loss of generality, we will be able to assume that the multiplicative factors are divisible by 4.
 
\begin{theorem}\label{thmmultiplemetab} Suppose that $p$ is prime and the nonsingular form $4 k(\beta_1 \oplus \beta_2)$ on $(\zz_p \oplus G)^{4k}$ has a metabolizer $\calm$.  Then   $\calm$    contains an element of the form $(1, 1, \ldots , 1, \alpha_{2k+1}, \cdots , \alpha_{4k}) \oplus b$ for some set of $\alpha_i \in \zz_p$ and some $b \in G^{4k}$.
\end{theorem}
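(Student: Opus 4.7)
The plan is to project the metabolizer onto the $\zz_p^{4k}$ factor, show the resulting $\ff_p$--subspace has dimension at least $2k$, and then extract the desired element by elementary linear algebra. Write $\pi\co (\zz_p \oplus G)^{4k} \to \zz_p^{4k}$ for the projection, set $V = \pi(\calm) \subset \zz_p^{4k} = \ff_p^{4k}$, and let $W = \ker(\pi|_\calm) = \calm \cap G^{4k}$.

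First I would establish the dimension bound $\dim_{\ff_p} V \geq 2k$. Since $\calm$ is a metabolizer for a nonsingular form on a group of order $p^{4k}|G|^{4k}$, one has $|\calm| = p^{2k}|G|^{2k}$. The subgroup $W$ is isotropic for the nondegenerate form $4k\beta_2$ on $G^{4k}$, so the standard estimate $|W|^2 \leq |G^{4k}|$ gives $|W| \leq |G|^{2k}$. Combined with $|V|\cdot|W| = |\calm|$ this yields $|V| \geq p^{2k}$, hence $\dim V \geq 2k$.

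Next I would extract a vector of the right shape from $V$. Choose an $\ff_p$--basis of $V$ and form the $4k \times d$ matrix $M$ whose columns are these basis vectors, where $d = \dim V \geq 2k$. Since row rank equals column rank, some $d$ rows of $M$ are linearly independent; pick any $2k$ of them and let $S \subset \{1,\ldots,4k\}$ be the corresponding index set. The $2k \times d$ submatrix $M_S$ then has rank $2k$, so the coordinate projection $V \to \ff_p^S$ is surjective, and in particular there is $v \in V$ with $v_i = 1$ for every $i \in S$. By the definition of $V$ this lifts to some $(v,b) \in \calm$, and by relabeling the $4k$ interchangeable summands of $(\zz_p \oplus G)^{4k}$ so that $S = \{1,\ldots,2k\}$ we obtain the required element.

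I expect no substantive obstacle. The only point that needs care is the order estimate on $|W|$; if the non-$p$-primary torsion in $G$ complicates matters, I would first split $G = G_p \oplus G_{p'}$, note that $\calm = \calm_p \oplus \calm_{p'}$ and that $\calm_p$ is a metabolizer for the $p$-primary summand by an order count as in the proof of Theorem~\ref{metabolizerthm2}, and then run the argument inside $\calm_p \subset \calm$. Once the dimension count $\dim V \geq 2k$ is in hand, which is essentially forced by $\calm$ having the order of a Lagrangian, the combinatorial extraction is immediate.
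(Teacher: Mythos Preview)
Your proof is correct and takes a genuinely different route from the paper. The paper first invokes the fact that $W(\qq/\zz)$ is $4$--torsion to produce a metabolizer $\calm'$ for $4k\beta_2$ on $G^{4k}$, then applies Proposition~\ref{metasplitprop} to $\calm$ and $\calm'$ to obtain an exact $2k$--dimensional metabolizer $\caln$ for $4k\beta_1$ on $\zz_p^{4k}$; Gauss--Jordan on a basis of $\caln$, summing the basis vectors, and a final lift through $\calm'$ give the element. Your argument bypasses both the Witt $4$--torsion fact and Proposition~\ref{metasplitprop}: you project $\calm$ onto $\zz_p^{4k}$ directly and control the kernel by the elementary isotropy estimate $|W|^2 \le |G^{4k}|$, which is valid for any nondegenerate $\qq/\zz$--valued form on a finite abelian group, so your caveat about first passing to $p$--primary parts is unnecessary. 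The only thing the paper's route buys is that the component $b$ lands in a specific metabolizer $\calm'$ of $4k\beta_2$, but this extra structure is never used downstream. Both arguments tacitly permute the $4k$ identical summands so that the $2k$ pivot coordinates occupy the first $2k$ slots; you make this explicit, while in the paper it is hidden inside the Gauss--Jordan step.
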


\begin{proof}  The Witt group $W(\qq/\zz)$ is 4--torsion~\cite{mh}, and thus $4 k \beta_2$ has a metabolizer $\calm'$.  By Proposition~\ref{metasplitprop}, the set of elements $x$ such that $(x,y) \in \calm$ for some $y \in \calm'$ is a metabolizer, denoted $\caln$, for $4k \beta_1$, and thus is $2k$--dimensional.  As argued in~\cite{LN1}, a simple application of the Gauss--Jordan algorithm applied to a generating set for $\caln$  yields  a generating set consisting of vectors of the form $(1,0 ,0 ,0 , \ldots , 0, *, * \ldots )$, $(0,1 ,0 ,0 , \ldots , 0, *, * \ldots )$, $(0,0, 1 ,0 , \ldots , 0, *, * \ldots )$, $\ldots$, where each initial sequence of a 1 and 0s is of length $2k$.  

By adding these vectors together, we find that the metabolizer  $\caln$ contains an element of the form $ (1,1,\cdots, 1, \alpha_{2k+1} , \cdots, \alpha_{4k})  \in \zz_p^{4k}$.  Finally, since each element in $\caln$ pairs with an element in the metabolizer $\calm'$ to give an element in $\calm$, we get the desired element $b$.
\end{proof}

%%%%%%%SECTION%%%%%%%%%%%%%%
\section{\Spc--structures}\label{spincsection}  We need the following facts about    \Spc($Y)$, the set of  \Spc--structures on an arbitrary space $Y$.
\begin{itemize}
\item The first Chern class is a map $c_1 \co   \text{\Spc}(  Y )  \to H^2(Y).$\vskip.05in

\item  There is a transitive action $H^2(Y) \times \text{\Spc}(Y) \to \text{\Spc}(Y) $ denoted $(\alpha,\spinc) \to \alpha \cdot \spinc$.\vskip.05in

\item  For $Y \subset W$, the restriction map $r$  is functorial:  If $\spinc \in  \text{\Spc}(W)$,  $\alpha \in H^2(W)$   then
$$r(\alpha  \cdot \spinc) = r(\alpha) \cdot r(\spinc).$$\vskip.05in

\item  For all $\alpha \in H^2(Y) $ and $\spinc \in \text{\Spc}(Y)$, $c_1(\alpha \cdot \spinc) - c_1(\spinc) = 2 \alpha $.\vskip.05in

\item  As a corollary, if $|H^2(Y)|$ is finite and odd, then $c_1 \co  \text{\Spc}(  Y )  \to H^2(Y) $ is a bijection. \vskip.05in

\item   There is a canonical bijection:  \Spc($Y \cs W) \to$ \Spc($Y)  \times$ \Spc($W$).   \vskip.05in
\end{itemize}

For every smooth 4--manifold $X$, the set \Spc$(X)$ is nonempty.  (See~\cite{gs} for a proof.)  As a consequence, we have the following.

\begin{theorem}\label{extendthm} Let $ N  = \partial X$  and  let $  \spinc \in \text{\Spc}(N) $ be the restriction of a \Spc--structure on $X$.  Then the set of  \Spc--structures on $N$ which extends to $X$ are those of the form  $ \alpha  \cdot \spinc $ for   $ \alpha $ in the image of the restriction map $r\co H^2(X) \to H^2(N)$.
\end{theorem}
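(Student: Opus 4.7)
The plan is to exploit the transitive (torsor) action of $H^2$ on \Spc-structures together with the functoriality of the restriction map, both of which appear in the bullet list above. By hypothesis, fix $\tilde{\spinc} \in \text{\Spc}(X)$ with $r(\tilde{\spinc}) = \spinc$; this serves as a reference \Spc-structure on $X$.

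For the easier inclusion, suppose $\alpha \in H^2(N)$ has the form $\alpha = r(\tilde\alpha)$ for some $\tilde\alpha \in H^2(X)$. Then $\tilde\alpha \cdot \tilde{\spinc}$ lies in $\text{\Spc}(X)$, and by functoriality its restriction to $N$ equals $r(\tilde\alpha \cdot \tilde{\spinc}) = r(\tilde\alpha) \cdot r(\tilde{\spinc}) = \alpha \cdot \spinc$. Hence every \Spc-structure of the form $\alpha \cdot \spinc$ with $\alpha$ in the image of $r$ does extend to $X$.

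For the converse, suppose $\spinc' \in \text{\Spc}(N)$ extends, i.e. $\spinc' = r(\tilde{\spinc}')$ for some $\tilde{\spinc}' \in \text{\Spc}(X)$. By transitivity of the $H^2(X)$-action on $\text{\Spc}(X)$, choose $\tilde\alpha \in H^2(X)$ with $\tilde{\spinc}' = \tilde\alpha \cdot \tilde{\spinc}$. Restricting and applying functoriality once more yields $\spinc' = r(\tilde\alpha) \cdot r(\tilde{\spinc}) = r(\tilde\alpha) \cdot \spinc$, and by construction $r(\tilde\alpha)$ lies in the image of $r$, as required.

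I expect no real obstacle: the theorem is essentially a formal consequence of the torsor property of \Spc combined with restriction functoriality, and the proof uses nothing beyond these two bullets and the assumed extendability of $\spinc$. The only minor subtlety is to apply transitivity on $X$ rather than on $N$; this is precisely what converts the extendability of $\spinc'$ into an image condition on $\alpha$.
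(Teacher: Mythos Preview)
Your proof is correct and is exactly the argument the paper intends: the theorem is stated there without proof, merely as a consequence of the preceding bullet points (torsor action of $H^2$ on \Spc\ and functoriality of restriction), and your write-up simply unpacks those two properties in the only natural way.
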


\subsection{Identifying $H_1(N)$ and $H^2(N)$.}  Suppose that  $N$ is a rational homology 3--sphere bounding a rational homology ball $X$. Then by Poincar\'e duality,  $H_1(N) \cong H^2(N)$.  We have denoted kernel($H_1(N) \to H_1(X))$ by $\calm$.  Via duality, it corresponds to the image of $H^2(X)$ in $H^2(N)$.  Thus, we will use $\calm$ to denote this subgroup of $H^2(N)$.
\subsection{Spin--structures}  If the order $|H_1(M)|$ is odd, then there is a unique Spin--structure on $M$  that lifts to a canonical \Spc--structure that we will denote $\spinc_0$.  With this, there is a natural identification of $H^2(M)$ with \Spc$(M)$.  However, we face the complication that in assuming that $M$ bounds a rational homology 4--ball $X$, we cannot assume that $X$ has a Spin--structure.  The following result permits us to adapt to this possibility.  (In addition to playing a role in considering splittings of classes in $\Theta^3_\qq$, in Section~\ref{toptrivialbordism} we will use this result to extend a theorem from~\cite{hlr}  in which an added hypothesis was needed to ensure the existence of a Spin--structure on $X$.)

\begin{theorem}\label{lemmaextendspin} Suppose that $ N_1 \cs N_2 = \partial X$ for some smooth rational homology 4--ball $X$ and that the order of $H_1(N_1)$ is odd.  Then the image of the restriction map \Spc$(X) \to $ \Spc$(N_1)$ contains the Spin--structure $\spinc_0 \in $ \Spc$(N_1)$.  In particular, every element in the image of this restriction map is of the form $\alpha \cdot \spinc_0$ for   $\alpha \in \text{Image}(H^2(X) \to H^2(N_1))$. 
\end{theorem}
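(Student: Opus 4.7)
The plan is to begin with any \Spc--structure $\spinct$ on $X$, which exists by the cited fact that every smooth $4$--manifold admits one, and then to adjust it by a suitable integer multiple of $c_1(\spinct)$ so that the adjusted structure restricts to a \Spc--structure on $N_1$ with vanishing first Chern class. Since $|H_1(N_1)|$ is odd, the map $c_1\co \text{\Spc}(N_1)\to H^2(N_1)$ is a bijection, so any \Spc--structure on $N_1$ with trivial $c_1$ must be $\spinc_0$; this yields the first assertion.

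Concretely, let $r_1\co H^2(X)\to H^2(N_1)$ and $R_1\co \text{\Spc}(X)\to\text{\Spc}(N_1)$ denote the maps obtained by restriction to $\partial X$ followed by the canonical projection to the $N_1$--factor. Set $\gamma=c_1(R_1(\spinct))\in H^2(N_1)$; by naturality of $c_1$ this equals $r_1(c_1(\spinct))$, so in particular $\gamma\in r_1(H^2(X))$. Because $|H^2(N_1)|$ is odd, $\gamma$ has odd order $d$, so one may pick an integer $e$ with $2e+1\equiv 0\pmod{d}$. Form the modified \Spc--structure $\spinct'=\bigl(e\cdot c_1(\spinct)\bigr)\cdot\spinct$ on $X$. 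Combining the naturality $R_1(\alpha\cdot\spinct)=r_1(\alpha)\cdot R_1(\spinct)$ with the identity $c_1(\alpha\cdot\spinct)=c_1(\spinct)+2\alpha$ from the excerpt, the first Chern class of $R_1(\spinct')$ is $\gamma+2e\gamma=(2e+1)\gamma=0$, and hence $R_1(\spinct')=\spinc_0$.

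The ``in particular'' statement is then immediate from Theorem~\ref{extendthm} applied to $N=\partial X$ and $\spinct'$: the image of $\text{\Spc}(X)\to\text{\Spc}(\partial X)$ is the orbit of $\spinct'|_{\partial X}$ under the action of the image of $H^2(X)\to H^2(\partial X)$. Projecting to $N_1$ under the canonical bijection $\text{\Spc}(\partial X)\cong\text{\Spc}(N_1)\times\text{\Spc}(N_2)$ identifies the image of $R_1$ with the set of elements $\alpha\cdot\spinc_0$ as $\alpha$ ranges over $r_1(H^2(X))$.

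The argument is essentially routine, using only the bullet-point properties of \Spc--structures, the existence of some \Spc--structure on $X$, and arithmetic in the odd cyclic subgroup generated by $\gamma$. The only conceptual step — and hence the main ``obstacle'' — is recognizing that $\gamma$ itself already lies in $r_1(H^2(X))$, so an integer multiple of $c_1(\spinct)$ suffices to cancel $\gamma$ modulo $2$; no independent structural information (such as a metabolizer analysis) about $r_1(H^2(X))$ is needed. The oddness hypothesis is used exactly twice: to make $2$ invertible modulo $d$, and to make $c_1$ a bijection on $\text{\Spc}(N_1)$.
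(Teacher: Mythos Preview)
Your proof is correct, but it follows a different strategy than the paper's. The paper argues by parity and conjugation: the image $S\subset\text{\Spc}(N_1)$ is in bijection with the image $H\subset H^2(N_1)$, hence has odd order; since conjugation on \Spc--structures commutes with restriction, it acts on $S$, and an involution on an odd set must fix a point; finally the unique conjugation-fixed \Spc--structure is $\spinc_0$. Your argument is instead constructive: you observe that $\gamma=c_1(R_1(\spinct))$ already lies in $r_1(H^2(X))$ and has odd order, so twisting $\spinct$ by the multiple $e\cdot c_1(\spinct)$ with $2e+1\equiv 0\pmod{d}$ kills $\gamma$ directly. Both arguments exploit oddness in the same essential way (invertibility of $2$), but yours produces an explicit extension, while the paper's avoids any computation by appealing to the fixed-point principle. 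One small point you use implicitly is that $c_1(\spinc_0)=0$; this is standard, and in the paper's language follows from $c_1(\bar{\spinc})=-c_1(\spinc)$ together with $\bar{\spinc_0}=\spinc_0$ and oddness of $H^2(N_1)$.
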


\begin{proof}  Let $H =  \text{Image}(H^2(X) \to H^2(N_1))$ and  $S =\text{Image}($\Spc$(X) \to$ \Spc$(N_1))$. As usual, the choice of an element $\spinc \in S$ determines a bijection between $H$ and $S$.  In particular, the number of elements in $S$ is the same as in $H$, which is odd.  Conjugation defines an involution on $S$ which commutes with restriction.  Thus, since $S$ is odd, conjugation has a fixed point in $S$.  But the only fixed element under conjugation is the Spin--structure, since $ c_1(\bar{\spinc}) = -c_1(\spinc)$.
\end{proof}

%%%%%%%SECTION%%%%%%%%%%%%%%
\section{Basic obstructions from $d$--invariants}\label{secobstructions}  
To each rational homology 3--sphere $M$ and $\spinc \in \text{\Spc}(M)$ there is associated an invariant $d(M,\spinc) \in \qq$, defined in~\cite{os2}.  It is additive under connected sum: $d(M\cs N, (\spinc_1 , \spinc_2)) = d(M\ , \spinc_1  ) + d(  N,   \spinc_2) $. A key result relating the $d$--invariant and bordism is the following from~\cite{os2}.

\begin{theorem} 
If $M = \partial X$ with $H_*(X,\qq) \cong H_*(B^4,\qq)$,  and $ \spinct  \in \text{\Spc}(X)$, then $d(M,\spinct |_{M}) = 0$.\end{theorem}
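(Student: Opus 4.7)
The plan is to realize $X$ as a cobordism from $S^3$ to $M$ and exploit the functoriality of Heegaard--Floer theory under $\text{Spin}^c$--cobordisms, following the strategy of Ozsv\'ath--Szab\'o. Specifically, let $W = X \setminus \text{int}(B^4)$, regarded as a cobordism $W \co S^3 \to M$, and let $\spinct \in \text{\Spc}(X)$ be the given structure, with $\spinct|_{S^3} = \spinc_0$ the unique \Spc--structure on $S^3$. Because $H_*(X, \qq) \cong H_*(B^4, \qq)$, we have $\chi(W) = 0$, $\sigma(W) = 0$, and $b_1(W) = b_2^+(W) = b_2^-(W) = 0$. Moreover, $H^2(X;\qq) = 0$ forces $c_1(\spinct)$ to be torsion, so that the rational self-intersection $c_1(\spinct)^2$ equals $0$.

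Next, I would invoke the degree shift formula of Ozsv\'ath--Szab\'o for the induced cobordism map on $HF^+$: the map $F^+_{W,\spinct} \co HF^+(S^3,\spinc_0) \to HF^+(M,\spinct|_M)$ shifts absolute grading by
\[
\Delta = \frac{c_1(\spinct)^2 - 2\chi(W) - 3\sigma(W)}{4},
\]
which is $0$ under our hypotheses. Then, applying their theorem that a $\text{Spin}^c$--cobordism with $b_1 = b_2^+ = b_2^- = 0$ induces an isomorphism on $HF^\infty$, we see that $F^\infty_{W,\spinct}$ is an isomorphism. Since the $d$--invariant is characterized as the lowest degree of a nonzero element in the image $HF^\infty \to HF^+$, and this map intertwines the $U$--action, the image of the bottom of the $U$--tower of $HF^+(S^3)$ under $F^+_{W,\spinct}$ lies in degree $0$ and is $U$--non-torsion. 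This gives
\[
d(M, \spinct|_M) \le d(S^3, \spinc_0) = 0.
\]

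For the opposite inequality, I would run the identical argument on the reversed cobordism $\overline{W} \co M \to S^3$, equipped with the same underlying \Spc--structure. All the numerical invariants $\chi, \sigma, c_1^2$, and $b_i$ are unchanged, so again the degree shift is zero and the $HF^\infty$ map is an isomorphism, yielding $d(S^3,\spinc_0) \le d(M,\spinct|_M)$. Combining the two inequalities gives $d(M,\spinct|_M) = 0$.

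The main obstacle in presentation is really only citation management: essentially every substantive step is a direct invocation of machinery developed in \cite{os2}, namely the grading shift formula for cobordism maps and the invariance of $HF^\infty$ under rational homology cobordisms. Once those are in hand, no delicate computation remains, and the vanishing of $c_1(\spinct)^2$ is immediate from the rational triviality of $H^2(X)$.
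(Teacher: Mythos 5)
The paper itself offers no proof of this statement---it is quoted verbatim from Ozsv\'ath--Szab\'o \cite{os2}---so what you are really doing is reconstructing their argument, and your overall framework (puncture $X$ to get a rational homology cobordism $W\co S^3\to M$, check that the grading shift $\bigl(c_1(\spinct)^2-2\chi(W)-3\sigma(W)\bigr)/4$ vanishes, use that $F^\infty_{W,\spinct}$ is an isomorphism, and run the argument in both directions) is the right one; the computations $\chi(W)=\sigma(W)=0$ and $c_1(\spinct)^2=0$ are also fine. The gap is in how you extract each inequality. From $W\co S^3\to M$ you assert that the image under $F^+_{W,\spinct}$ of the bottom class of the tower in $HF^+(S^3)$ is a nonzero class in degree $0$ lying in the image of every $U^n$, and conclude $d(M,\spinct|_M)\le 0$. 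That nonvanishing is not justified, and in fact it fails precisely when $d(M,\spinct|_M)>0$: writing the bottom class as $\pi_1(x)$ with $x\in HF^\infty(S^3)$ homogeneous of degree $0$, commutativity gives $F^+(\pi_1(x))=\pi_2(F^\infty(x))$, and $\pi_2\co HF^\infty(M)\to HF^+(M)$ annihilates every class of degree less than $d(M,\spinct|_M)$. So the claim you rely on is equivalent to the inequality you are trying to prove; as written the step is circular.

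What the isomorphism on $HF^\infty$ actually yields from $W\co S^3\to M$ is the \emph{opposite} inequality: since $F^\infty_{W,\spinct}$ is surjective and the square relating the flavors commutes, the image of $\pi_2$ in $HF^+(M)$---which contains a nonzero class in degree exactly $d(M,\spinct|_M)$---equals $F^+_{W,\spinct}\bigl(\mathrm{im}\,\pi_1\bigr)$, and $\mathrm{im}\,\pi_1\subset HF^+(S^3)$ is supported in degrees $\ge 0$; with zero grading shift this forces $d(M,\spinct|_M)\ge 0$. The reversed cobordism (equivalently, applying the same reasoning to $-M=\partial(-X)$ and using $d(-M,\spinc)=-d(M,\spinc)$) then gives $d(M,\spinct|_M)\le 0$. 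In short, your two inequalities are each attached to the wrong cobordism, and the justification offered for each does not stand on its own; once the directions are corrected and the argument is phrased via surjectivity of $F^\infty$ hitting the bottom of the \emph{target} tower, rather than nonvanishing of the image of the \emph{source's} bottom class, the proof closes up and agrees with the argument in \cite{os2}.
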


\subsection{Obstruction theorem}
Suppose that $|H_1(M)|$  is odd and $\spinc_0$ is the unique  Spin--structure on $M$.   For $\alpha \in H^2(M)$, we abbreviate $d(M, \alpha \cdot \spinc_0) $ by $d(M, \alpha)$. 

\begin{definition} $\bar{d}(M, \alpha) = d(M, \alpha) -d(M,0)$.
\end{definition}

The following result will be sufficient to prove that $\Theta^3_\qq / \Phi(\oplus \Theta^3_{\zz[1/p]})$ is infinite.
\begin{theorem}\label{obstructthm}Suppose $\{M_i\}$ is a collection of 3--manifolds for which $H_1(M_i) = \zz_{m_i} \oplus \zz_{n_i}$, where $m_i$ and $n_i$ are square free and odd, and the full set $\{m_i, n_i\}$ is pairwise relatively prime.   If a finite connected sum $\cs_{k=1}^N \pm M_{i_k}$ represents a class in $\Theta^3_{\qq}$ that is in the image of $\oplus_{p} \Theta^3({\zz[1/p]})$, then for all $i=i_k, 1 \le k \le N$, and for all $(a,b) \in \zz_{m_i} \oplus \zz_{n_i}$, $$\bar{ d} (M_{i}, (a,b) ) =    \bar{d}(M_{i}, (a,0) ) +  \bar{d}(M_{i},  (0,b)) .$$
\end{theorem}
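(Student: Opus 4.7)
The plan is to combine the metabolizer splitting Theorem~\ref{metabolizerthm2}, the Spin-extension Theorem~\ref{lemmaextendspin}, and the vanishing of $d$-invariants on extendable \Spc--structures over a rational homology ball.

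\textbf{Setup.} By hypothesis there exist $\zz[1/p_j]$--homology spheres $N_{p_j}$ (with $H_1(N_{p_j})$ being $p_j$--torsion) such that $\partial X = \bigl(\cs_k\pm M_{i_k}\bigr)\cs\bigl(\cs_j \mp N_{p_j}\bigr)$ bounds a smooth rational homology ball $X$; let $\calm \subset H_1(\partial X)$ be the resulting metabolizer. Fix an index $i$ from the list and split $\partial X = M_i \cs Y$, where $Y$ is the connected sum of the remaining $\pm M_{i_l}$ and $\mp N_{p_j}$. The pairwise-coprime hypothesis forces the $m_i$--primary (resp.\ $n_i$--primary) part of $H_1(Y)$ to come only from those $N_{p_j}$ with $p_j \mid m_i$ (resp.\ $p_j \mid n_i$), so $Y$ decomposes as a connected sum $Y_m \cs Y_n \cs Y'$ with $H_1(Y_m)$ supported on primes dividing $m_i$ and $H_1(Y_n)$ supported on primes dividing $n_i$.

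\textbf{Extracting metabolizer elements.} For each prime $p$ dividing $m_i$, the $p$--primary part of the linking form on $H_1(\partial X)$ lives on $\zz_p \oplus H_1(Y_m)_p$, and its metabolizer is the $p$--primary part of $\calm$. Applying Theorem~\ref{metabolizerthm2} to each such prime and assembling via the Chinese Remainder Theorem produces an element $((1,0),x) \in \calm$ with $x \in H_1(Y_m)$; scalar multiplication then gives, for every $a \in \zz_{m_i}$, an element $((a,0),x_a) \in \calm$ with $x_a \in H_1(Y_m)$. The symmetric argument yields, for every $b \in \zz_{n_i}$, an element $((0,b),y_b) \in \calm$ with $y_b \in H_1(Y_n)$; linearity then places all four of $((0,0),0)$, $((a,0),x_a)$, $((0,b),y_b)$, and $((a,b),x_a+y_b)$ in $\calm$.

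\textbf{Applying the $d$--invariant obstruction.} Since $|H_1(M_i)| = m_i n_i$ is odd, Theorem~\ref{lemmaextendspin} supplies a \Spc--structure $(\spinc_0, \tau_0) \in \text{\Spc}(\partial X)$ extending over $X$, where $\spinc_0$ is the Spin structure on $M_i$. Every extendable \Spc--structure is then of the form $\alpha \cdot (\spinc_0, \tau_0)$ for $\alpha \in \calm$, and each such structure has vanishing $d$--invariant. Evaluating this vanishing on the four metabolizer elements above, then expanding via the connected-sum additivity of $d$ for both $\partial X = M_i \cs Y$ and $Y = Y_m \cs Y_n \cs Y'$, and using that $x_a$ perturbs only the $Y_m$--coordinate of $\tau_0$ while $y_b$ perturbs only the $Y_n$--coordinate, the four $Y$--side $d$--invariants cancel in the alternating sum, leaving
$$
d(M_i,(a,b)) - d(M_i,(a,0)) - d(M_i,(0,b)) + d(M_i,(0,0)) = 0,
$$
which rearranges to the desired identity $\bar d(M_i,(a,b)) = \bar d(M_i,(a,0)) + \bar d(M_i,(0,b))$.

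\textbf{Main obstacle.} The delicate step is arranging the decomposition $Y = Y_m \cs Y_n \cs Y'$ so that $x_a$ and $y_b$ live in disjoint connected-sum factors of $Y$, which is exactly what makes the four $Y$--side $d$--invariants cancel in the alternating sum. This requires simultaneously the pairwise coprimality hypothesis (to rule out $m_i$-- or $n_i$--torsion contributions from the other $M_{i_l}$), the primary splitting of the linking form, and the fact that membership in the image of $\Phi$ realizes the cobordism target as a genuine connected sum of $p$--primary manifolds. Once this setup is in place, the remaining computation is the routine alternating-sum cancellation above.
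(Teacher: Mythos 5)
Your proposal is correct and follows essentially the same route as the paper: split off $M_i$, collect the remaining summands into pieces supported on primes dividing $m_i$, primes dividing $n_i$, and the rest, use Theorem~\ref{lemmaextendspin} to base at the Spin structure, extract metabolizer elements of the form $((1,0),x)$ and $((0,1),y)$, and cancel the four auxiliary $d$--invariants in the alternating sum. Your CRT assembly of Theorem~\ref{metabolizerthm2} over the primes dividing $m_i$ is exactly the content of the paper's Corollary~\ref{productthm}, so there is no substantive difference.
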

\begin{proof}Suppose that $Y =  \cs_k \pm M_{i_k} \in \Phi(\oplus_{p} \Theta^3({\zz[1/p]}))$. We consider $k= 1$,  abbreviating  $M_{i_1} = M$ and $H_1(M) \cong \zz_m \oplus \zz_n$.
Suppose that $Y$ is in the image.  Then $Y \cs \oplus Y_{p_i} = \partial X$ for some collection of $Y_{p_i}$ which are $\zz[p_{i}^{-1}]$--homology spheres and $X$ is a rational homology ball.    Collecting summands, we can write $M  \cs N_m \cs N_n \cs N = \partial X$, where the prime factors of $|H_1(N_m)|$ all divide $m$, the prime factors of  $|H_1(N_n)| $ all divide $n$, and $|H_1(N)|$ is relatively prime to $mn$.
Let $(\spinc_0, \spinc_1, \spinc_2, \spinc_*) \in \text{Image(\Spc}(X))$.   (By Theorem~\ref{lemmaextendspin}  we can assume that the structure $\spinc_0 \in \text{\Spc}(M)$ is the Spin--structure.)  Then by Corollary \ref{productthm}, for all $a \in \zz_m$ and $b \in \zz_n$, there are elements $a' \in H_1(N_m)$ and $b' \in H_1(N_n)$ such that:  \vskip.05in 

 \begin{itemize}
\item $((a,0) \cdot \spinc_0, a' \cdot \spinc_1, \spinc_2, \spinc_*) \in \text{Image(\Spc}(X))$. \vskip.05in

\item $(  (0,b) \cdot \spinc_0,   \spinc_1,b' \cdot  \spinc_2, \spinc_*) \in \text{Image(\Spc}(X))$. \vskip.05in

\item $(  (a,b) \cdot \spinc_0,  a' \cdot  \spinc_1,b' \cdot  \spinc_2, \spinc_*) \in \text{Image(\Spc}(X))$. \vskip.05in
\end{itemize}
Thus, we have the following vanishing conditions on the $d$--invariants:
\vskip.05in
\begin{itemize}
\item $d(M  , \spinc_0) + d(N_m, \spinc_1) + d(N_n, \spinc_2)  + d(N, \spinc_*)   = 0$.\vskip.05in

\item $d(M  , (a,0)\cdot \spinc_0) + d(N_m, a' \cdot \spinc_1) + d(N_n, \spinc_2)  + d(N, \spinc_*)   = 0$.\vskip.05in

\item $d(M  ,(0,b) \cdot  \spinc_0) + d(N_m, \spinc_1) + d(N_n, b' \cdot \spinc_2)  + d(N, \spinc_*)   = 0$.\vskip.05in

\item $d(M  , (a,b) \cdot  \spinc_0) + d(N_m, a' \cdot \spinc_1) + d(N_n,b' \cdot  \spinc_2)  + d(N, \spinc_*)   = 0$.\vskip.05in
\end{itemize}

Subtracting the second and third equality from the sum of the first and  fourth yields:
$$d(M , (a,b) \cdot \spinc_0) -    d(M , (a,0)\cdot  \spinc_0) -  d(M , (0,b) \cdot\spinc_0) +d(M ,  \spinc_0) = 0.$$
Recalling that $\bar{d}(M, \alpha )$ denotes $d(M, \alpha \cdot \spinc_0) - d(M, \spinc_0)$, this can be rewritten as $$\bar{d}(M, (a,b)) - \bar{d}(M, (a,0 ))- \bar{d}(M, (0,b)) =0.$$ Repeating for each $M_i$ completes the proof of the theorem.
\end{proof}

%%%%%%%SECTION%%%%%%%%%%%%%%

\section{Lens Space Examples:  $L(pq,1)$.}\label{basiclensspace}
Let $\{p_i,q_i\}$ be a set of pairs of odd integers such that the union of all pairs are pairwise relatively prime.  We prove:

\begin{theorem}\label{lensspacethm}  No 
finite linear combination $\cs_k \pm L(p_{i_k}q_{i_k} ,1)$   represents an element in the image $\Phi(\oplus_{p \in \calp} \Theta^3_{\zz[1/p]}) \subset \Theta^3_\qq$.  
\end{theorem}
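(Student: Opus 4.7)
The plan is to apply Theorem~\ref{obstructthm} to the summands $M_i = L(p_i q_i, 1)$. Each has $H_1(M_i) \cong \zz_{p_i} \oplus \zz_{q_i}$ via the Chinese Remainder Theorem, and the standing hypothesis that $\bigcup_i \{p_i, q_i\}$ is a pairwise coprime set of odd (and, as in the section heading, square-free) integers places this collection directly in the setting of that obstruction theorem. If some $\cs_k \pm L(p_{i_k}q_{i_k},1)$ lay in the image of $\Phi$, then for every pair $(p,q) = (p_{i_k},q_{i_k})$ and every $(a,b) \in \zz_p \oplus \zz_q$ the bilinearity
\[ \bar{d}(L(pq,1),(a,b)) = \bar{d}(L(pq,1),(a,0)) + \bar{d}(L(pq,1),(0,b)) \]
would be forced. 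The strategy is therefore to produce, for each admissible $(p,q)$, a single witness $(a,b)$ violating this identity; I expect $(a,b) = (1,1)$ to work uniformly.

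I would first record the standard $d$--invariant formula
\[ \bar{d}(L(n,1),t) = \frac{t(n-t)}{n}, \qquad t \in \{0,1,\ldots,n-1\}, \]
where the Spin--structure corresponds to $t=0$; this follows either from the recursion of Ozsv\'ath--Szab\'o~\cite{os2} or by capping $L(n,1)$ with the $\pm n$--disk bundle over $S^2$. Next, I would make the CRT identification $\zz_p \oplus \zz_q \cong \zz_{pq}$ explicit: let $\alpha,\beta \in \{1,\ldots,pq-1\}$ be the idempotents with $\alpha \equiv 1 \pmod p$, $\alpha \equiv 0 \pmod q$, and symmetrically for $\beta$. Writing $\alpha = qu$ and $\beta = pv$ with $1 \le u \le p-1$ and $1 \le v \le q-1$, the conditions $\alpha+\beta \equiv 1 \pmod{pq}$ and $\alpha+\beta < 2pq$ pin down $\alpha+\beta = pq+1$, so that $(1,1) \in \zz_p \oplus \zz_q$ corresponds to the representative $1 \in \{0,\ldots,pq-1\}$. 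A short algebraic manipulation using $\alpha\beta = pq\cdot uv$ then gives
\[ \bar{d}(L(pq,1),(1,0)) + \bar{d}(L(pq,1),(0,1)) - \bar{d}(L(pq,1),(1,1)) = 2(uv-1). \]

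The main obstacle is then to confirm that $uv \ge 2$ for every admissible pair $(p,q)$. If instead $uv = 1$, then $u = v = 1$, forcing $\alpha = q$ and $\beta = p$, equivalently the simultaneous congruences $q \equiv 1 \pmod p$ and $p \equiv 1 \pmod q$; these yield $q \ge p+1$ and $p \ge q+1$, which is impossible once $p,q > 1$. Hence $uv \ge 2$, the bilinearity fails by at least $2$, and Theorem~\ref{obstructthm} is contradicted for every finite combination $\cs_k \pm L(p_{i_k}q_{i_k},1)$, completing the argument.
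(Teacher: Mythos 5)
Your proposal is correct and follows essentially the same route as the paper: apply Theorem~\ref{obstructthm} to a chosen summand $L(pq,1)$, identify the Spin--structure with the index-$0$ structure in the Ozsv\'ath--Szab\'o enumeration of \Spc--structures, and exhibit one element of $\zz_p\oplus\zz_q$ on which additivity of $\bar{d}$ fails. The only difference is cosmetic: the paper tests the image $p+q\in\zz_{pq}$ of $(1,1)$ (using the generators $q$ and $p$ of the primary summands) and computes the defect to be exactly $-2$, whereas you test $1\in\zz_{pq}$ via the CRT idempotents, obtaining defect $\pm 2(uv-1)$ and needing the small extra observation that $u=v=1$ is impossible --- both are valid instances of the same obstruction.
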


\begin{proof} We consider the first term $L(p_1q_1,1)$ and simplify notation by writing $p = p_1$ and $q = q_1$.
By Theorem~\ref{obstructthm} we would have  for all $(a,b) \in \zz_{p} \oplus \zz_{q}$, $$\bar{ d} (L(pq,1), (a,b) ) =    \bar{d}(L(pq,1), (a,0) ) +  \bar{d}(L(pq,1),  (0,b)) .$$
According to~\cite{os2}, for some enumeration of  \Spc--structures on $L(m,n)$, denoted $\spinc_i$, $0\le i <m$, if we let $D(m,n,i) = d(-L(m,n), \spinc_i)$, there is the recursive formula:
$$D(m,n,i) = \frac{ mn - (2i +1 - m-n)^2}{4mn} - D(n,m',i'),$$
where the primes denote reductions modulo $n$, $0<n <m$, and   $0 \le i <  m$.  The base case in the recursion is by definition $D(1,0,0) = 0$.
For every \Spc--structure $\spinc$ there is a conjugate structure $\bar{\spinc}$ for which $d(M, \spinc) = d(M, \bar{\spinc})$ and $\spinc \ne \bar{\spinc}$ unless $\spinc$ is the Spin--structure.  We claim that for $L(pq,1)$, the \Spc--structure $\spinc_0$ does correspond to the Spin--structure.  To see this, observe that an algebraic compuation shows   $4pqD(pq,1,i) = -4i^2 +4pq i +pq(1-pq)$ and in particular, $pqD(pq,1,0) = pq(1-pq)$.  The difference $4pqD(pq,1,i) - 4pqD(pq,1,0) = 4i(pq -i)$,   does not take on the value 0 for any $0<i<pq$.  Since the value of $D(pq,1,0)$ is unique among the $d$--invariants, it must correspond to the Spin--structure.
In applying Theorem~\ref{obstructthm}, we identify $\zz_p \oplus \zz_q \cong \zz_{pq}$, so that the pair $(a,b) \in \zz_p \oplus \zz_q  $ corresponds  to $aq+bp \in \zz_{pq}$. In this case, the  criteria becomes 
$$D(pq,1, ap+bq)  -   D(pq,1,ap) - D(pq,1,bq)+   D(pq,1,0)  = 0.$$
Certainly $p+q <pq$, so we can apply the formula for $D$ with $a = b =1$ .  However, in this case the sum is immediately calculated to equal $-2 \ne 0$. 
\end{proof}

%%%%%%%%%%SECTION

\section{Infinite order examples}\label{infiniteorderlens}
The examples of the previous section are sufficient to demonstrate  that the quotient $\Theta^3_\qq /  \Phi(\oplus \Theta^3_{\zz[1/p]})$ is infinite.  We now present an argument to show it contains an infinite free subgroup.  To carry out this argument we need to make the additional assumption of primeness for the relevant $p$ and $q$.   Let $\{p_i , q_i\}$ be a set of distinct odd prime pairs with all elements distinct.  This section is devoted to the proof of the following theorem.

\begin{theorem}\label{infiniteorderthm} The lens spaces $L(p_i q_i, 1)$ are linearly independent  in the quotient $\Theta^3_\qq / \Phi(\oplus \Theta^3_{\zz[1/p]})$.
\end{theorem}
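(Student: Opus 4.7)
The plan is to extend the single-copy obstruction of Theorem~\ref{lensspacethm} to arbitrary integer multiples by combining the \Spc--twisting mechanism of Theorem~\ref{obstructthm} with the metabolizer refinement of Theorem~\ref{thmmultiplemetab}. Suppose for contradiction that a nontrivial integer combination $\cs_i c_i L(p_iq_i,1)$ lies in $\Phi(\oplus_{p\in\calp}\Theta^3_{\zz[1/p]})$; after reindexing and, if necessary, reorienting we may assume $c_1>0$. Multiplying the relation by $4$ keeps it in the image and turns it into one with $4k:=4c_1$ copies of $L:=L(pq,1)$, where $p=p_1$ and $q=q_1$. Thus there is a rational homology ball $X$ whose boundary $M$ is the connected sum of these $4k$ copies of $L$, the remaining lens--space terms, and a connected sum of $\zz[1/p_j]$--homology spheres $Z_j$ coming from a pre--image of the relation under $\Phi$.

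Because a metabolizer of a $\qq/\zz$--valued linking form splits along its primary components, the $p$-- and $q$--primary pieces of the full metabolizer can be handled separately. The $p$--primary part of $H_1(M)$ is $\zz_p^{4k}\oplus B_p$, where $B_p=\bigoplus_{p_j=p}H_1(Z_j)$ has metabolic linking form because each $Z_j$ bounds a $\zz[1/p]$--homology ball. Applying Proposition~\ref{metasplitprop} to reduce to the $\zz_p^{4k}$ factor, followed by the Gauss--Jordan step from the proof of Theorem~\ref{thmmultiplemetab}, produces an element $m_p$ of the full metabolizer whose $\zz_p^{4k}$--component has the shape $(1,\ldots,1,\alpha_{2k+1},\ldots,\alpha_{4k})$ and whose remaining support lies entirely in $B_p$. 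Interchanging the roles of $p$ and $q$ produces $m_q$ with the analogous $\zz_q^{4k}$--component and support in the $q$--primary summand.

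By Theorem~\ref{lemmaextendspin} we can fix a \Spc--structure $\sigma_0$ on $M$ that extends over $X$ and restricts to the Spin structure $\spinc_0$ on each copy of $L$. The four \Spc--structures $\sigma_0$, $m_p\cdot\sigma_0$, $m_q\cdot\sigma_0$, $(m_p+m_q)\cdot\sigma_0$ all extend to $X$, so for each the sum of $d$--invariants over the summands of $M$ vanishes. On every summand of $M$ other than a copy of $L$, at most one of $m_p,\ m_q$ acts nontrivially, and the alternating combination of the four identities collapses as $d-d-d+d=0$ on that summand. What survives is the contribution from the $4k$ copies of $L$,
\[
\sum_{j=1}^{4k} E\bigl(x_{p,j},x_{q,j}\bigr)=0,\qquad E(a,b):=\bar d\bigl(L,(a,b)\bigr)-\bar d\bigl(L,(a,0)\bigr)-\bar d\bigl(L,(0,b)\bigr),
\]
in which $x_{p,j}=x_{q,j}=1$ for $j\le 2k$.

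The main obstacle, and the punchline, is a sign computation showing $E\ge 0$. Using the closed form $\bar d(L(pq,1),\alpha)=\alpha^2/(pq)-\alpha$ on canonical representatives $\alpha\in\{0,\ldots,pq-1\}$, extracted from the recursion recalled in the proof of Theorem~\ref{lensspacethm}, and identifying $(a,b)\in\zz_p\oplus\zz_q$ with $aq+bp\in\zz_{pq}$, a short case split on whether $aq+bp<pq$ or $aq+bp\ge pq$ yields $E(a,b)=2ab$ or $E(a,b)=2(p-a)(q-b)$, respectively. Both expressions are nonnegative for $a\in\{0,\ldots,p-1\}$, $b\in\{0,\ldots,q-1\}$, and $E(1,1)=2$. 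Therefore the first $2k$ terms of the displayed sum already contribute $4k$ while the remaining $2k$ terms are each $\ge 0$, forcing the total to be at least $4k>0$ and contradicting the identity. Hence $c_1=0$; applying the same argument to each remaining lens--space index in turn shows all $c_i=0$, completing the proof.
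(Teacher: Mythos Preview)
Your argument follows the paper's route almost step for step: multiply the relation by $4$, use the metabolizer structure of Theorem~\ref{thmmultiplemetab} to produce elements $m_p,m_q$ whose first $2k$ coordinates in $(\zz_p)^{4k}$ and $(\zz_q)^{4k}$ are all $1$'s, take the alternating sum of $d$--invariants over the four \Spc--structures $\sigma_0,\,m_p\cdot\sigma_0,\,m_q\cdot\sigma_0,\,(m_p+m_q)\cdot\sigma_0$, and reduce to a sign--definiteness statement for the quantity you call $E$. Your computation of $E$ via the closed form $\bar d(L(pq,1),\alpha)=\alpha^2/(pq)-\alpha$ and the two--case split $aq+bp<pq$ versus $aq+bp\ge pq$ is correct and in fact a bit cleaner than the paper's three--case split over symmetric representatives; the paper obtains the same conclusion with the opposite sign (it is really computing $d(-L)$ in that lemma), which is immaterial.

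There is one genuine gap. You claim that $B_p=\bigoplus_{p_j=p}H_1(Z_j)$ has metabolic linking form ``because each $Z_j$ bounds a $\zz[1/p]$--homology ball.'' That is false: the $Z_j$ are merely representatives of classes in $\Theta^3_{\zz[1/p_j]}$ chosen as a preimage under $\Phi$, and there is no reason any of them should bound. Without a metabolizer for $B_p$ you cannot invoke Proposition~\ref{metasplitprop} to project the full metabolizer down to $(\zz_p)^{4k}$, so the Gauss--Jordan step has no input to act on. The fix is the one already built into the proof of Theorem~\ref{thmmultiplemetab}: the total $p$--primary form on $(\zz_p)^{4k}\oplus B_p$ is metabolic (it is a primary summand of the metabolic form on $H_1(\partial X)$), and the form on $(\zz_p)^{4k}$ is metabolic because $W(\qq/\zz)$ is $4$--torsion; hence $B_p$ is Witt trivial, and since Witt trivial forms are metabolic, $B_p$ has a metabolizer after all. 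With that correction in place the remainder of your argument is sound.
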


\subsection{Notation}  
Suppose that  $\sum_i b_i L(p_iq_i ,1) \subset \text{Image\ } (\Phi)$.  We can assume that $b_1 \ne 0$.  We simplify notation, writing $p $ and $q $ for $p_1$ and $q_1$, respectively.
There is no loss of generality in assuming that for all $i$,  $b_i = 4k_i$ for some $k_i$, and write $k= k_1$.  At times we also abbreviate $L(pq,1) = L_{pq}$.

Following our earlier approach,   we will show that a contradiction arises from the assumption that   $N = 4k L(pq,1)  \cs M_p \cs M_q \cs M_*= \partial X$ for some rational homology 4--ball $X$, where the orders of $H_1(M_p)$ and $H_1(M_q)$ are powers of $p$ and $q$, respectively, and the order of $H_1(M_*)$ is relatively prime to $pq$.

According to Theorem~\ref{thmmultiplemetab}, the $p$--primary part of the associated metabolizer, $\calm_p$, includes a vector $A  = ((1, \ldots, 1, \alpha_{2k+1} , \ldots , \alpha_{4k}), g) \in  (\zz_p)^{4k} \oplus H_1(M_p)$.  Similarly, the $q$--primary part of the associated metabolizer, $\calm_q$, includes a vector $B = ((1, \ldots, 1, \beta_{2k+1} , \ldots , \beta_{4k}), h) \in  (\zz_q)^{4k} \oplus H_1(M_q)$.  
\subsection{ Constraints on the $d$--invariants}
We let the Spin--structures on $L(pq,1)$, $M_p$, and $M_q$ be $\spinc_0, \spinc_0'$ and $\spinc_0''$, respectively.    Consider now the vectors 0,  $aA$, $bB$, and $aA + bB \in \calm$. Computing the $d$-invariant associated to each, we find that each of the following sums is 0.

\begin{itemize}
\item $2k d(L_{pq}, s_0) + \sum_{i = 2k+1}^{ 4k}  d(L_{pq} ,  \spinc_0) + d(M_p, \spinc_0' ) + d(M_q, \spinc_0'') +d(M_*,\spinct) $.\vskip.05in

\item $2k d(L_{pq}, aq \cdot s_0) + \sum_{i = 2k+1}^{4k}  d(L_{pq},  aq \alpha_i \cdot \spinc_0) + d(M_p, a g \cdot \spinc_0' ) + d(M_q,  \spinc_0'') +d(M_*,\spinct)$.\vskip.05in

\item $2k d(L_{pq}, bp\cdot s_0) + \sum_{i = 2k+1}^{ 4k}  d(L_{pq},  bp \beta_i \cdot \spinc_0) + d(M_p,   \spinc_0' ) + d(M_q,  bh\cdot \spinc_0'') +d(M_*,\spinct)$.\vskip.05in

\item $2k d(L_{pq}, (aq + bp) \cdot s_0) + \sum_{i = 2k+1}^{ 4k}  d(L_{pq} , (aq \alpha_i  + bp \beta_i) \cdot \spinc_0) +  d(M_p, a g \cdot \spinc_0') )+ d(M_q,  bh\cdot \spinc_0'')+d(M_*,\spinct) $.\vskip.05in

\end{itemize}

\noindent{\bf Note.} We have again used  that the inclusion   $\zz_p \subset \zz_{pq}$ takes $\alpha$ to $\alpha q$, and similarly for $\zz_q$ and $\beta$.
We now take the sum of the first and last equation, and subtract the sum of the middle two.    The result is that for some set of $a_i$ and $b_i$:
$$  \hskip-.5in 2k\left( { d} (L_{pq}, aq + bp) -   { d} (L_{pq}, aq )-   { d} (L_{pq},   bp)   +  {d}  (L_{pq}, 0)  \right)+$$  
$$\hskip.5in  \sum_{i = 2k+1}^{4k}  \left(   { d}  (L_{pq}, a_i q + b_i p) -  { d}  (L_{pq}, a_i q )-   { d} (L_{pq},   b_i p) +   { d}   (L_{pq},   0)\right)
= 0.$$
We now introduce further notation: let 
$$ \delta  (L_{pq} , a,b) =  {d} (L_{pq}, aq + bp) - {d} (L_{pq}, aq)-   {d} (L_{pq},  bp) +  {d} (L_{pq}, 0) .$$  With this, we have proved the following lemma.

\begin{lemma}\label{deltaboundlemma} If  the lens spaces $L_{p_i q_i}$ are linearly  dependent in $\Theta^3_\qq / \Phi(\oplus \Theta^3_{p})$ and, for $p=p_1$ and $q=q_1$, $L_{pq}$ has nonzero coefficient in some linear relation,  then for  all $a$ and $b$ there are $k$, $a_i$ and $b_i$ such that,
$$2k  {\delta} (L_{pq},a,b) + \sum_{i = 2k+1}^{ 4k}  {\delta} (L_{pq},a_i ,b_i ) = 0. $$
\end{lemma}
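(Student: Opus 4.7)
The plan is to assemble the lemma directly from the Spin$^c$ obstruction machinery developed in Sections~\ref{secmetabolizers}--\ref{secobstructions}. Starting from the standing assumption that $\sum_i 4k_i L(p_iq_i,1) \in \text{Image}(\Phi)$ with $k = k_1 \neq 0$, collect terms by primary decomposition of the first homology to write $4k L_{pq} \cs M_p \cs M_q \cs M_* = \partial X$ for some smooth rational homology $4$-ball $X$, where $|H_1(M_p)|$ is a power of $p$, $|H_1(M_q)|$ is a power of $q$, and $|H_1(M_*)|$ is relatively prime to $pq$.

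Next I apply Theorem~\ref{thmmultiplemetab} twice, once to the $p$-primary summand $4k(\beta_{L_{pq}}|_{\zz_p} \oplus \beta_{M_p})$ and once to its $q$-primary analogue, to produce metabolizer vectors
$$A = ((1,\ldots,1,\alpha_{2k+1},\ldots,\alpha_{4k}),g) \in (\zz_p)^{4k} \oplus H_1(M_p),$$
$$B = ((1,\ldots,1,\beta_{2k+1},\ldots,\beta_{4k}),h) \in (\zz_q)^{4k} \oplus H_1(M_q).$$
Under the inclusions $\zz_p,\zz_q \hookrightarrow \zz_{pq}$ sending $\alpha \mapsto \alpha q$ and $\beta \mapsto \beta p$, the four elements $0$, $aA$, $bB$, and $aA+bB$ all sit inside the full metabolizer $\calm$ for the linking form of $\partial X$.

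The heart of the argument is to extract four vanishing $d$-invariant relations, one for each of these four metabolizer elements. Using Theorem~\ref{lemmaextendspin}, I may choose the Spin structures $\spinc_0, \spinc_0', \spinc_0''$ on $L_{pq}, M_p, M_q$ (their orders are odd) together with some extendable $\spinct$ on $M_*$ as a base point in the image of $\Spc(X) \to \Spc(\partial X)$. Translating this base point by each of the four metabolizer elements via the transitive $H^2$-action, then applying the $d$-invariant vanishing theorem, yields the four summed relations displayed in the excerpt. Taking the alternating combination (first) + (fourth) $-$ (second) $-$ (third) causes the $d$-invariant contributions from $M_p$, $M_q$, and $M_*$ to cancel in pairs, since each of those factors either receives the same Spin$^c$ shift in the two positive terms as in the two negative ones, or is shifted independently of the interaction between $a$ and $b$. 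What remains are the $L_{pq}$ contributions: the first $2k$ copies contribute $2k\,\delta(L_{pq}, a, b)$, while the remaining copies, receiving shifts by $a\alpha_i$ and $b\beta_i$ instead of $a$ and $b$, contribute $\sum_{i=2k+1}^{4k} \delta(L_{pq}, a_i, b_i)$ with $a_i = a\alpha_i$ and $b_i = b\beta_i$.

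The main obstacle is purely bookkeeping: keeping straight which Spin$^c$-structure sits on which summand of the $4k+3$-fold connected sum, tracking the identifications $H_1 \cong H^2$ on each piece, and ensuring that the inclusions $\zz_p,\zz_q \hookrightarrow \zz_{pq}$ interact correctly with the action on Spin$^c$-structures of $L_{pq}$. Theorem~\ref{lemmaextendspin} is essential here: without it, one could not assume that the restricted Spin$^c$-structures on $L_{pq}$, $M_p$, $M_q$ agree with the Spin structures at the base point, which is what allows $\delta$ (defined relative to $\spinc_0$) to appear cleanly in the final identity.
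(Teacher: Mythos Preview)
Your proposal is correct and follows the paper's argument essentially verbatim: the paper likewise collects terms into $4kL_{pq}\cs M_p\cs M_q\cs M_*=\partial X$, invokes Theorem~\ref{thmmultiplemetab} on the $p$-- and $q$--primary summands to produce the vectors $A$ and $B$, uses Theorem~\ref{lemmaextendspin} to take Spin--structures as base points, and then takes the alternating sum of the four vanishing $d$--invariant relations for $0,\,aA,\,bB,\,aA+bB$ to obtain the displayed identity with $a_i=a\alpha_i$, $b_i=b\beta_i$. One minor notational slip: the $p$--primary form is $4k\bigl(\beta_{L_{pq}}|_{\zz_p}\bigr)\oplus\beta_{M_p}$ rather than $4k\bigl(\beta_{L_{pq}}|_{\zz_p}\oplus\beta_{M_p}\bigr)$, so Theorem~\ref{thmmultiplemetab} does not apply verbatim, but its proof goes through since $4k\bigl(\beta_{L_{pq}}|_{\zz_p}\bigr)$ is Witt trivial, forcing $\beta_{M_p}$ to be metabolic and hence supplying the metabolizer $\calm'$ needed there---the paper makes the same elision.
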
 

\subsection{Computation of bounds on $\delta (L_{pq},a,b)$}
Note that $\delta(L_{pq},a,b)=0$ if $a=0$ or $b=0$. Given Lemma~\ref{deltaboundlemma}, the proof of Theorem~\ref {infiniteorderthm} is completed with the following result.

\begin{lemma}  For all $a \ne  0 \mod p$ and $b \ne 0 \mod q$, $\delta( L_{pq}, a,b)  <  0 $. \end{lemma}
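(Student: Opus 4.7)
The plan is to derive a closed-form expression for $\delta(L_{pq},a,b)$ by direct substitution into the explicit $d$--invariant formula from Section~\ref{basiclensspace} and then read off the sign.  The recursion there yields, for the standard integer representative $i \in \{0,\ldots,pq-1\}$ of an element of $H^2(L_{pq}) \cong \zz_{pq}$,
\[
\bar{d}(L_{pq}, i) = -\tfrac{i(pq-i)}{pq},
\]
which vanishes only at $i=0$ and is strictly negative otherwise.  Thus $\delta(L_{pq},a,b)$ is a quadratic expression obtained by evaluating this formula at the three representatives $[aq+bp]_{pq}$, $aq$, and $bp$.

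Given the hypotheses $a \not\equiv 0 \pmod p$ and $b \not\equiv 0 \pmod q$, I take representatives $a \in \{1,\ldots,p-1\}$ and $b \in \{1,\ldots,q-1\}$; then $aq$ and $bp$ already lie in $\{0,\ldots,pq-1\}$, so no reduction is needed for those terms.  Only $aq+bp$ may exceed $pq$, producing the natural case split.  In the case $aq+bp < pq$, substitution together with the identity $(aq+bp)^2-(aq)^2-(bp)^2 = 2abpq$ collapses $\delta(L_{pq},a,b)$ to $-2ab$.  In the case $aq+bp \geq pq$, one has $[aq+bp]_{pq} = aq+bp-pq$; substituting and using the factorization $ab+pq-aq-bp = (p-a)(q-b)$ gives $\delta(L_{pq},a,b) = -2(p-a)(q-b)$.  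Since $1 \leq a \leq p-1$ and $1 \leq b \leq q-1$, each of $a$, $b$, $p-a$, $q-b$ is strictly positive, so $\delta(L_{pq},a,b) < 0$ in either case.

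The only point requiring care is the case split on the size of $aq+bp$; the algebra itself is routine.  Combined with Lemma~\ref{deltaboundlemma}, this completes the proof of Theorem~\ref{infiniteorderthm}: every term in the sum $2k\,\delta(L_{pq},a,b) + \sum_{i=2k+1}^{4k} \delta(L_{pq},a_i,b_i)$ is nonpositive, and the initial $2k$ terms are strictly negative, so the sum cannot vanish.
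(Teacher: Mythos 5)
Your overall strategy is the same as the paper's: substitute the closed-form lens-space $d$--invariant into $\delta(L_{pq},a,b)$ and do a case analysis on which integer representative of $aq+bp$ the formula applies to. The paper works with symmetric representatives $|a|\le\frac{p-1}{2}$, $|b|\le\frac{q-1}{2}$ and uses conjugation symmetry $d(L,i)=d(L,-i)$, which produces three cases; your choice of representatives $1\le a\le p-1$, $1\le b\le q-1$ with reduction modulo $pq$ gives an equivalent (and slightly tidier) two-case bookkeeping, and your factorization identities are correct.

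There is, however, an internal sign inconsistency that you need to repair. With your stated formula $\bar{d}(L_{pq},i)=-\frac{i(pq-i)}{pq}$, the linear terms cancel in $\delta$ and the quadratic terms give $\delta(L_{pq},a,b)=\frac{(aq+bp)^2-(aq)^2-(bp)^2}{pq}=+2ab$ in your first case and $+2(p-a)(q-b)$ in your second case --- strictly positive, not the claimed $-2ab$ and $-2(p-a)(q-b)$. Your claimed closed forms (and the sign in the lemma) instead correspond to the convention the paper uses in this section, namely $4pq\,d(L_{pq},i)=pq-(pq)^2+4pq\,i-4i^2$, i.e.\ $\bar{d}(i)=+\frac{i(pq-i)}{pq}$; note that by the recursion of Section~\ref{basiclensspace} this is really the formula for $d(-L(pq,1),\spinc_i)$, an orientation point the paper itself elides. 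So exactly one of your two ingredients carries the wrong sign, and as written your own formula for $\bar{d}$ would prove $\delta>0$, the opposite of the statement. The slip is reparable rather than fatal: the application through Lemma~\ref{deltaboundlemma} only requires that $\delta(L_{pq},a,b)$ be nonzero of a fixed sign whenever $a\not\equiv0\pmod p$ and $b\not\equiv0\pmod q$, which your computation does yield once the two signs are made consistent --- but you must fix the mismatch and state which orientation convention for $L_{pq}$ you are using.
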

\begin{proof}  All \Spc--structures are included by considering the range $-\frac{p-1}{2} \le a \le \frac{p-1}{2}$ and $-\frac{q-1}{2} \le b \le \frac{q-1}{2}$.  By symmetry we can exclude the case $a<0$.  Since the formula for the $d$--invariant $d(L(pq,1), i)$ assumes $i \ge 0$, there are three cases to consider.

\begin{enumerate}
\item $a > 0 , b>0$.\vskip.05in 

\item $a>0 , -\frac{aq}{p} < b <0$. \vskip.05in 

\item $a>0 , b< -\frac{aq}{p}$. \vskip.05in
\end{enumerate}
The formula for the $d$--invariant  in the current case is 
$$4 n(d(L(n,1), i)) =   { n- (2i + 1 -n -1)^2}  = n - n^2 + 4n i - 4i^2,$$  for $0 \le i <  n$.  
We now compute $4pq \delta(L_{pq}, aq+bp)$ in each of the three cases.  First note that $\delta(L_{pq},aq + bp) = d(L_{pq}, aq+bp) - d(L_{pq}, aq) - d(L_{pq},bp) + d(L_{pq},0).$  In places we write $pq = n$ to simplify the appearance of the formula.
\begin{enumerate}
\item  Since all entries are now positive we find
\begin{align*}
4n \delta(L_{pq},a,b)  &=    \left( n - n^2 + 4n(aq +bp) - 4(aq+bp)^2 \right)\\
&-  \left( n - n^2 + 4n(aq ) - 4(aq)^2 \right)\\
&-  \left( n - n^2 + 4n(bp ) - 4(bp)^2 \right)\\
&+   \left( n - n^2 + 4n(0 ) - 4(0)^2 \right).
\end{align*}
This simplifies to $-8abpq$, which is negative.
\vskip.05in

\item  In this case  $bp<0$, so we replace $d(L_{pq}, bp) $ with $d(L_{pq}, -bp)$ in the computation.  
\begin{align*}
4n \delta(L_{pq},a,b) & =    \left( n - n^2 + 4n(aq +bp) - 4(aq+bp)^2 \right)\\
&-  \left( n - n^2 + 4n(aq ) - 4(aq)^2 \right)\\
&-  \left( n - n^2 + 4n(-bp ) - 4(-bp)^2 \right)\\
&+   \left( n - n^2 + 4n(0 ) - 4(0)^2 \right).
\end{align*}
This simplifies to give $-8b(a-p)pq$.  Since $b<0$ and $a < \frac{p-1}{2}$, this is negative.\vskip.05in

\item In this case, both $bp$ and $aq + bp <0$.  Thus, we compute 
\begin{align*}
4n \delta( L_{pq},a,b ) & =    \left( n - n^2 + 4n(-aq -bp) - 4(-aq-bp)^2 \right)\\
&-  \left( n - n^2 + 4n(aq ) - 4(aq)^2 \right)\\
&-  \left( n - n^2 + 4n(-bp ) - 4(-bp)^2 \right)\\
&+   \left( n - n^2 + 4n(0 ) - 4(0)^2 \right).
\end{align*}
This simplifies to give $-8apq(b+q)$.  Since $b > -\frac{q-1}{2}$, this is again negative.
\end{enumerate}
\end{proof}

%%%%%%%SECTION%%%%%%%%%%%%%%
\section{An Order 2 lens space that does not split}\label{order2lensspacesec}
We now consider a  lens space that represents 2--torsion in $\Theta^3_\qq$.  Let  $M= L(65,8)$;  since $8^2 = -1 \mod 65$,   $M= -M$  and $2M = 0 \in \Theta^3_\qq$.  We show that $M$ does not split.
It follows quickly from the fact that $ L(65,8) $ is of finite order in $\Theta^3_\qq$ that for the Spin-structure $\spinc^*$, $d(L(65,8), \spinc^*) = 0$.  On can compute directly from the formula for $D$ given above that the value 0 is realized only by $\spinc_{36}$.     Thus, in applying Theorem~\ref{obstructthm} we identify the homology class $x \in H_1(L(65,8))$ with the \Spc--structure $\spinc_{36+x}$, where the index is taken modulo 65.  
The matrix in  Figure~\ref{fig1a} presents the values of $d(L(65,8), 13a + 5b)$ (multiplied by 65 to clear denominators).  Rows correspond to the values of $a$ and columns to $b$.   The central row and left column correspond to $a=0$ and $b=0$ respectively. Symmetry permits us to list only the values with $b \ge 0$.       In Figure~\ref{fig2a} we list the differences, $d(L(65,8), 13a + 5b) - d(L(65,8), 13a ) -d(L(65,8), 5b)$, with the nonzero entries demonstrating the failure of additivity.
\begin{figure}[h]
$$\begin{array}{c|c|c|c|c|c|c|c|} 
& b=0 & b= 1 & b= 2 & b= 3 & b= 4 & b=5 & b= 6\\
\hline
a=2        &  -52 & 18 & -32 & 58 &28 & 8 & 128 \\
\hline 
a=1     & 52 & -8 & 72 & 32 &2 &112 & -28\\
\hline 
a=0       &0 & 70 & 20 & -20 & 80 &-70& -80 \\
\hline
a=-1     &52 &-8 & -58 & 32 &-128 & 18 & -28  \\
\hline
a =-2         & -52 &-112 &-32 & -72 & 28 &8 & -2  \\
\hline
\end{array}$$
%\vskip.1in
\caption{$65\,d(L(65,8), 13a + 5b) $}\label{fig1a}
\end{figure}

\begin{figure}[h]
$$\begin{array}{c|c|c|c|c|c|c|c|} 
& b=0 & b= 1 & b= 2 & b= 3 & b= 4& b=5  & b= 6\\
\hline
a=2        & 0 & 0 & 0 & 2 & 0 & 2 & 4 \\
\hline 
a=1     & 0 & -2 & 0 & 0 & -2 & 2 & 0 \\
\hline 
a=0       & 0 & 0 & 0 & 0 & 0 & 0 & 0 \\
\hline
a=-1     & 0 &-2 & -2 & 0& -4 &0 & 0 \\
\hline
a =-2         & 0 & -2 & 0 & 0 & 0 & 2 & 2 \\
\hline
\end{array}$$
%\vskip.1in
\caption{$d(L(65,8), 13a + 5b) - d(L(65,8), 13a ) -d(L(65,8), 5b)$}\label{fig2a}
\end{figure}

%%%%%%%%%%%%%%%SECTION%%%%%%%%%
\section{Infinite 2--torsion}\label{inftwotor}
We now generalize the previous example to describe an infinite subgroup of $\Theta^3_\qq$ consisting of 2--torsion that injects into the quotient      $\Theta^3_\qq / \Phi(\oplus_{p \in \calp} \Theta^3_{\zz[1/p]})$.
Consider the family $N_n = L( 4(5n +1)^2 +1, 2(5n+1))$; for $n=-1$ we have $-L(65,  8)$ as in the previous section, but we simplify the computations by restricting to $n>0$.  Expanding, we have
$N_n = L(5 ( 20n^2 +8n +1), 2(5n+1))$.  If $n \ne 3 \mod 5$, then $20n^2 + 8n +1$ is not divisible by 5.  By Appendix~\ref{appendpi} we can further assume that the $n$ are   selected so that $n$ is divisible by 5 and the set of integers $20n^2 + 8n +1$ are pairwise relatively prime and square free.   We enumerate the set of such $n$ as $n_i$ and abbreviate the corresponding  lens spaces   as $L(5p_i, q_i) = N_{n_i}$.
The remainder of this section is devoted to proving the following.

\begin{theorem}\label{twotorthm}The set   $\{N_{n_i}$\} generates an infinite subgroup consisting of elements of order 2 in $\Theta^3_\qq / \Phi(\oplus_{p \in \calp} \Theta^3_{\zz[1/p]})$.
\end{theorem}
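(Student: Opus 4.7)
\medskip

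The plan is to mimic the proof of Theorem~\ref{infiniteorderthm}: since each $N_{n_i}$ is already $2$--torsion the $4k$--copies trick of Section~\ref{infiniteorderlens} is unnecessary, but a new difficulty appears because the prime $5$ is shared across all summands $N_{n_i}$.  This is handled by extracting, for each $i\in S$, two distinguished metabolizer elements and then running a pairwise cancellation argument that isolates one $N_{n_i}$ at a time.

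Suppose for contradiction that a nonempty finite $\zz_2$--sum $\cs_{i\in S} N_{n_i}$ lies in $\Phi(\oplus_{p\in\calp}\Theta^3_{\zz[1/p]})$.  Grouping the image terms by prime, there is a rational homology $4$--ball $X$ with
$$\partial X \;=\; \bigl(\cs_{i\in S}N_{n_i}\bigr) \cs M_5 \cs \bigl(\cs_{i\in S}M_{p_{n_i}}\bigr) \cs M_*,$$
where $M_5$ has $5$--torsion $H_1$, each $M_{p_{n_i}}$ has $p_{n_i}$--torsion $H_1$, and $M_*$ collects all remaining primes.  Let $\calm$ be the linking form metabolizer.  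For each $i\in S$, Corollary~\ref{productthm} applied once with $m=5$ (taking the distinguished $\zz_5$ to be the one coming from $N_{n_i}$) and once with $m=p_{n_i}$ (squarefree by choice) produces two elements $V_i,A_i\in\calm$: the element $V_i$ projects to $1$ in the $i$-th $\zz_5$ factor, to an arbitrary element of the remaining $5$--primary factors $(\zz_5)^{|S|-1}\oplus H_1(M_5)$, and to $0$ in every factor coprime to $5$; the element $A_i$ projects to $1$ in $\zz_{p_{n_i}}$, to some element of $H_1(M_{p_{n_i}})$, and to $0$ in every factor outside the $p_{n_i}$--primary part.

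Fix $i\in S$ and parameters $a\in\zz_5$, $b\in\zz_{p_{n_i}}$.  By Theorems~\ref{extendthm} and \ref{lemmaextendspin}, each of the four metabolizer shifts $0,\,aV_i,\,bA_i,\,aV_i+bA_i$ of the Spin--structure on $\partial X$ extends to $X$, so the total $d$--invariant vanishes for each.  Adding the four equations with signs $+,-,-,+$, all contributions cancel in pairs except those of $N_{n_i}$: on each $N_{n_j}$ with $j\ne i$ the four shifts take the form $(0,0),(ac_j,0),(0,0),(ac_j,0)$ (where $c_j$ is the $j$-th $\zz_5$--coordinate of $V_i$); the four shifts on $M_5$ are $0,ag,0,ag$ and on $M_{p_{n_i}}$ are $0,0,ba_i,ba_i$; and the remaining summands $M_{p_{n_j}}$ ($j\ne i$) and $M_*$ are never shifted.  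The surviving $N_{n_i}$ contribution gives the obstruction
$$\bar d(N_{n_i},(a,b)) - \bar d(N_{n_i},(a,0)) - \bar d(N_{n_i},(0,b)) \;=\; 0 \qquad \text{for all }(a,b)\in\zz_5\oplus\zz_{p_{n_i}}.$$

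The final step is to show this obstruction fails for every $n=n_i$ in the family.  The Spin--structure on $N_n = L(5p_n,2(5n+1))$ is the unique conjugation fixed point, at index $i_0 = (5p_n+10n+1)/2 = 50n^2+25n+3$, and the recursion of Section~\ref{basiclensspace} together with the base value $D(q,1,j) = (q-(2j-q)^2)/(4q)$ allows each needed $D$--value to be written in closed form after one step.  I would evaluate $D(5p_n,2(5n+1),\cdot)$ at the four indices $i_0,\,i_0+5,\,i_0+p_n,\,i_0+p_n+5$ and form the combination $-\delta(N_n,1,1)$; a short calculation (for example $D(m,q,i_0)=0$ and $D(m,q,i_0+5)=50(5n+1)/m$) shows the recursion collapses cleanly at $i_0$ and $i_0+5$, and the remaining two values reduce via $p_n\equiv 5^{-1}\pmod{2(5n+1)}$ to explicit rationals whose alternating sum is nonzero, generalizing the tabulation in Section~\ref{order2lensspacesec} for $n=-1$.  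The principal technical obstacle is this systematic verification across the entire family, but the computation is structurally parallel to the single-$n$ case already completed.
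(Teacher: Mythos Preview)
Your proposal is correct and follows essentially the same route as the paper.  Two small differences are worth noting.  First, you treat the shared prime $5$ as a genuine new obstacle and track every $N_{n_j}$ separately through a pairwise cancellation; the paper sidesteps this entirely by absorbing all $N_{n_j}$ with $j\ne i$ into the ``garbage'' summand $M_m$ (whose homology is allowed $5$--torsion, since it need only be coprime to $p_{n_i}$), so the argument reduces immediately to the single-manifold additivity obstruction exactly as in Theorem~\ref{obstructthm}.  Second, for the explicit failure of additivity the paper chooses $(a,b)=(1,-1)$ rather than your $(1,1)$; with Lemma~\ref{spinlemma} the three relevant indices $i_0+p-5,\ i_0+p,\ i_0-5$ correspond to $(x,y)=(7n+1,9n-3),\ (7n+1,9n+2),\ (5n+1,5n-4)$, and the alternating sum comes out to exactly $4$ for every $n$.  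Your $(1,1)$ choice would work as well once the analogous computation is completed.
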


To begin, we need to identify the Spin--structure.  We use the recursion formula
$$D(m,n,i) = \frac{ mn - (2i +1 - m-n)^2}{4mn} - D(n,m',i')$$
to compute relevant $d$--invariants.     
We are interested in the lens spaces  $L(4r^2 +1, 2r)$.  One step of the recursion reduces this to $L(2r, 1)$, and another step reduces it to $S^3$.  Since we need to reduce modulo $2r$, for $0\le i<4r^2+1$, let $y$ be the remainder of $i$ modulo $2r$ and $x$ the quotient so that $2rx+y=i$. So we write \Spc--structures as $\spinc_{2rx + y}$ for $0 \le y < 2r$ and  $0\le 2rx+y<4r^2+1$. Carrying out the arithmetic yields:

\begin{lemma}\label{spinlemma} For any $r>0$, $x$ and $y$ with $0\le y<2r $ and $0\le 2rx+y<4r^2+ 1$,
 \begin{enumerate}
\item  $d(L(4r^2 +1, 2r), \spinc_{2rx+y}) =\frac{2\left(rx^2+(y-r(2r+1))x-r(y^2-(2r-1)y-r)\right)} {4r^2+1}$.\vskip.05in

\item The discriminant of the numerator, viewed as a quadratic  polynomial  in the variable $x$, is $4(y-r)^2(4r^2+1)$. Moreover,  it is the square of an integer if and only if $y=r$.\vskip.05in
\item $d(L(4r^2 +1, 2r), \spinc_{2rx+y})=0$ if and only if $x=r$ and $y=r$.\vskip.05in
\item The Spin--structure on $L(4r^2 +1, 2r)$ is $\spinc_{2r^2 +r}$.\vskip.05in
\end{enumerate}
\end{lemma}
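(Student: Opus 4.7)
\smallskip\noindent\textbf{Proof plan for Lemma~\ref{spinlemma}.}

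The formula in (1) will come from applying the recursion in Section~\ref{basiclensspace} twice. Write $i = 2rx + y$ with $0 \le y < 2r$. Since $4r^2+1 \equiv 1 \pmod{2r}$ and $i \equiv y \pmod{2r}$, one step of the recursion gives
\[
D(4r^2+1, 2r, 2rx+y) = \frac{2r(4r^2+1) - (4rx+2y-4r^2-2r)^2}{8r(4r^2+1)} - D(2r, 1, y),
\]
and a second step (together with $D(1,0,0)=0$) yields $D(2r,1,y) = (r - 2(y-r)^2)/(4r)$. After clearing to the common denominator $8r(4r^2+1)$ the two copies of $2r(4r^2+1)$ cancel, and writing $2rx+y-2r^2-r = 2r(x-r)+(y-r)$ lets me expand and collect terms. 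Negating (since $d(L(m,n),\spinc_i) = -D(m,n,i)$) rearranges the result into the quadratic in $x$ displayed in (1).

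For (2), I compute the discriminant of the numerator $2[rx^2 + (y-r(2r+1))x - r(y^2-(2r-1)y-r)]$ regarded as a quadratic in $x$ directly: the $y^2$, $y$, and constant coefficients assemble into $(4r^2+1)[y^2 - 2ry + r^2]$ times $4$, giving $4(y-r)^2(4r^2+1)$. To identify when this is a perfect square, I use the elementary observation that $(2r)^2 < 4r^2+1 < (2r+1)^2$ for $r \ge 1$, so $4r^2+1$ itself is never a perfect square, which forces $y=r$. Part (3) is then essentially immediate: if $d$ vanishes at an integer $x$, the discriminant is a square, so $y=r$; substituting $y=r$ collapses the quadratic to $r(x-r)^2$, which vanishes only at $x=r$.

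Part (4) is where the geometry enters. From $(2r)^2 \equiv -1 \pmod{4r^2+1}$ one has the standard oriented lens-space equivalence $L(4r^2+1,2r) \cong -L(4r^2+1,2r)$, so $L(4r^2+1,2r)$ is $2$--torsion in $\Theta^3_\qq$ and $L(4r^2+1,2r)\cs L(4r^2+1,2r)$ bounds a smooth rational homology $4$--ball $X$. Because $|H_1(L(4r^2+1,2r))| = 4r^2+1$ is odd, Theorem~\ref{lemmaextendspin} guarantees that the Spin--structure $\spinc_0$ on the connected sum is the restriction of a \Spc--structure on $X$, so $d(L(4r^2+1,2r)\cs L(4r^2+1,2r),\spinc_0)=0$ and by additivity $d(L(4r^2+1,2r),\spinc_0)=0$ for the Spin--structure on the summand. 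By (3) the unique \Spc--structure with vanishing $d$--invariant is $\spinc_{2rx+y}$ with $x=y=r$, i.e.\ $\spinc_{2r^2+r}$, which must therefore be the Spin--structure. The main technical burden is the bookkeeping in the two-step $d$--invariant computation for~(1); once that is in place, (2)--(4) follow by algebra together with the one geometric input from Theorem~\ref{lemmaextendspin}.
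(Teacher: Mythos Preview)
Your proof is correct and follows essentially the same approach as the paper, which leaves the lemma as an exercise with only the hint ``carrying out the arithmetic yields.'' Your two-step application of the recursion for~(1), the squeeze $(2r)^2 < 4r^2+1 < (2r+1)^2$ for~(2), and the substitution $y=r$ collapsing the quadratic to $2r(x-r)^2$ for~(3) are exactly what the arithmetic amounts to; for~(4), the paper likewise uses (in the surrounding text) that $L(4r^2+1,2r)$ is $2$--torsion to force $d=0$ at the Spin--structure and then invokes the uniqueness in~(3), so your explicit appeal to Theorem~\ref{lemmaextendspin} just makes that step precise.
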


In our case   $r = 5n +1$ and the Spin--structure is $\spinc_{50n^2+25n+3}$.
\begin{proof}[Proof Theorem~\ref{twotorthm}]   For each $n$, we write  $N_n = L(5p_n , q_n)$ and assume that some linear combination $\sum N_{n_i} = 0 \in  \Theta^3_\qq / \Phi(\oplus_{p \in \calp} \Theta^3_{\zz[1/p]})$.  We write the first term in the sum as $N = L(5p,q)$ where $p =  20n^2 + 8n +1$.  

Since  the sum splits, for some collection of primes $r_j$ and manifolds $M_{r_j}$ with $H_1(M_{r_j}) $ being $r_j$--torsion, we have $$N \#_{i >1} N_{n_i} \#_j M_{r_j} = \partial X,$$ where $X$ is a rational homology ball.  We can collect terms as $N \# M_p \# M_m = \partial X$ where $M_p$ includes all the $M_{r_j}$ for which $r_j$ divides $p$, and $M_m$ contains all the other summands, including all the $N_{n_i}$ with $i>1$.

The homology of this connected sum of   three manifolds splits into the direct sum of three groups: $(\zz_5 \oplus \zz_p) \oplus G_p \oplus G_m$, where the order of $G_p$ is a product of prime factors of $p$, 5 does not divide the order of $G_p$, and the orders of $G_p$ and $G_m$ are relatively prime.  It follows that the $5$--torsion in the metabolizer, $\calm_5$, is contained in   $ (\zz_5,0)   \oplus   0   \oplus G_m$.  The direct sum of all primary parts of the metabolizer for primes that divide $p$, $\calm_p$, is  contained in $\calm_p =  (0,\zz_p )   \oplus   G_p   \oplus 0$.

Now, as in our previous arguments,  $\calm_5$ contains an element of the form $(1,0) \oplus 0 \oplus a''$ and $\calm_p$ contains an element $(0,1) \oplus b'' \oplus 0$.  Continuing as in the early proofs, we find that for all $a$ and $b$, 
$$\bar{d}(L(5p,q), (a,b)) = \bar{d}(L(5p,q), (a,0)) +  \bar{d}(L(5p,q), (0,b) ).$$  Or, writing $\zz_5 \oplus \zz_p$ as $\zz_{5p}$, 
$$\bar{d}(L(5p,q), pa + 5b) = \bar{d}(L(5p,q), pa ) +  \bar{d}(L(5p,q),5b ).$$
Since $L(5p,q)$ is of order two, for the Spin--structure the $d$--invariant vanishes, so the $\bar{d}$--invariant is the same as the $d$--invariant.  We let $a = 1$ and $b=-1$, and arrive at a contradiction by showing the following equality does not hold:
$$ {d}(L(5p,q), p-5) =  {d}(L(5p,q),  p ) +   {d}(L(5p,q), -5 ).$$
To apply Lemma~\ref{spinlemma} we need to express each of   $(50n^2+25n+3)+p-5$,  $(50n^2+25n+3)+p $, and   $(50n^2+25n+3)-5  $, as $2(5n+1)x +y$.  Simple algebra yields the following pairs $(x,y)$ for these three respective \Spc--structures:

\begin{itemize}
\item  $a=1, b= -1 \longrightarrow (x,y) = ( 7n+1, 9n-3)$.\vskip.05in

\item  $a=1, b= 0 \longrightarrow (x,y) = ( 7n+1, 9n+2)$.\vskip.05in

\item  $a=0, b= -1 \longrightarrow (x,y) = ( 5n+1, 5n-4)$.\vskip.05in
\end{itemize}

Finally, one uses these expressions to determine that for all $n$, $$ {d}(L(5p,q), p-5) -  {d}(L(5p,q),  p ) -   {d}(L(5p,q), -5 )= 4.$$
Since the difference is not zero, no splitting exists and the proof of Theorem~\ref{twotorthm} is complete.
\end{proof}           
%%%%%%%SECTION%%%%%%%%%%%%%%
\section{Topologically split examples}\label{secbasicexample}  

In this section, we apply Theorem~\ref{obstructthm} to find examples of manifolds that split topologically but not smoothly.   We begin by  carefully examining an example in which the splitting exists smoothly, focusing on the computation of the $d$--invariants, and next illustrate the modifications which do not change its topological cobordism class, but alter it smoothly.  The deepest aspect of the work is in the determination of the $d$--invariants.
In brief, the manifold we look at is 15--surgery on the $(3,5)$--torus knot, $T_{3,5}$, denoted $ S^3_{15}(T_{3,5}) $.  This is homeomorphic to the connected sum $L(3,5) \cs - L(5,3)$.  Next, letting $D $ denote the untwisted double of the trefoil knot ($D = Wh(T_{2,3})$), which is topologically slice, we consider   $ S^3_{15}(T_{3,5} \cs D ) $, and prove that it does not split in the cobordism group.    

In this section and the next, and also Appendix~\ref{torusknotpoly}, we develop properties of the Heegaard-Floer complex of specific torus knots as well as tensor products of certain of these complexes.  Related and more extensive computations appear in~\cite{hhn}. 

\subsection{ $\bf \bar{d} (S^3_{15}(T_{3,5}),i ))$} 
We now determine the doubly filtered   Heegaard-Floer complex   
$CFK^\infty(S^3, T_{3,5})$.  This complex is by definition a doubly filtered, graded  chain complex over $\ff_2$.  Thus a set of filtered generators can be illustrated on a grid with the coordinates representing the filtration levels and the grading marked.  There is an action of $\zz$ on the complex, and if we let $U$ be the generator, this  makes the complex a $\ff_2[U, U^{-1}]$--module.  The action of $U$ on the complex  lowers filtration levels by 1 and gradings by 2.

We now show that  $CFK^\infty(S^3, T_{3,5})$ is  as illustrated  in Figure~\ref{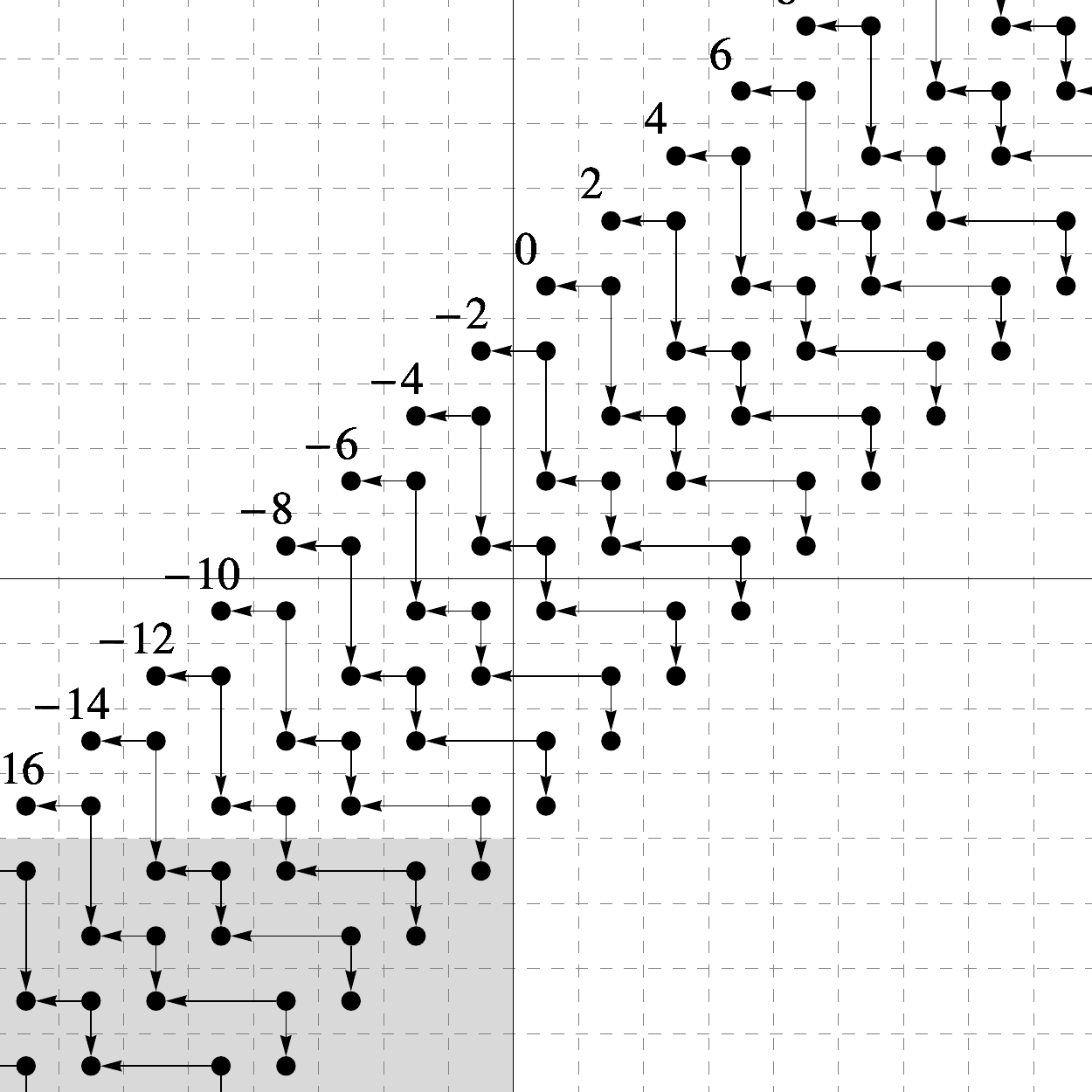}.   In order to find this decomposition, we start by focusing on the  central column (for which the top-most generator is at filtration level $j = 4$ and is labeled with its  grading 0).  
The vertical column, $i = 0$, represents the sub-quotient complex  $\widehat{CFK}(S^3 ,T_{3,5})$.  We begin by explaining why it appears as it does in the illustration.   According to~\cite[Theorem 1.2]{os3}, since for torus knots there is an integer surgery that yields a lens space, $\widehat{HFK}(S^3, T_{3,5}, j)$, the quotients of the $j$-filtration level by the $(j-1)$--filtration level is completely determined by the Alexander polynomial, $$\Delta_{T_{3,5}}(t) = 1 -(t^{-1}+t) +(t^{-3} + t^3) - (t^{-4} + t^4).$$  This explains the location of the  generators of  $\widehat{CFK}(S^3 ,T_{3,5})$.  Similarly,~\cite{os3} determines the grading of the generators.  The fact   the complex $\widehat{CFK}(S^3 ,T_{3,5})$ is a filtration of the complex $\widehat{CF}(S^3 )$ which has homology $\ff_2$ with its generator at  grading level 0, forces the vertical arrows, presenting the boundary maps, to be as illustrated. 
To build the $ {CFK}^\infty$ diagram from the $\widehat{CFK}$ diagram, we first apply the action of $U$ to fill in the generators as well  as the all the vertical arrows.  We next note that the homology groups $\widehat{HFK}(T_{3,5},i)$ can be computed using the horizontal slice $j=0$ instead of the vertical slice, and this forces the existence of the horizontal arrows as drawn.  With this much of the diagram drawn, and the action of $U$ lowering grading by $2$, the gradings of all the elements in the diagram are determined.  Finally, we note that the fact that the boundary map lowers gradings by 1 rules out the possibility of any other arrows.

\begin{figure}[h]
\fig{.4}{hf35-4.pdf}
\caption{}\label{hf35-4.pdf}
\end{figure}

According to~\cite{os4}, the complex  $CFK^+(S^3_{15}(T_{3,5}),s)$, for $-7 \le s \le 7$ is given by the quotient 
$$CFK^\infty(S^3 , T_{3,5} ) / CFK^\infty(S^3, T_{3,5} )_{i <0, j<s}[- \eta],$$ where the quotienting subgroup is shaded in the diagram  for $s=-4$.  Here $\eta$ is a grading shift:
$$\eta = \frac{-(2s-15)^2 +15}{60}.$$ 
By definition, the  $d$--invariant is  the minimal grading  among all classes in  the group $HFK^+(S^3_{15}(T_{3,5}),s )$ which are in the image of $U^n$ for all $n$.  From the diagram, without shifting the gradings, we see this minimum for $HFK^+(S^3_{15}(T_{3,5}),-4 )$ is $-8$: one generator of grading level $-10$ has been killed, and all such generators are homologous.  The values for all \Spc--structures, $s = -7, -6, \ldots , 6,7$ are given in order as $$\{-14, -12,-10,-8,-8,-6,-4,-4,-2,-2,-2,0,0,0,0\}.$$
After the grading shift, the values are all of the form $a_i/30$, where, in order, the $a_i$ are:
$$\{-7,-3,5,17,-27,-7,17,-15,17,-7,-27,17,5,-3,-7\}.$$\vskip.05in
Finally, to compute $\bar{d}$, we subtract  $-15/30$ (the value for the Spin structure) to each entry, and find that the values of $\bar{d}$ are given by $b_i/30$ for the following values of $b_i$ in order.
$$\{8,12,20,32,-12,8,32,0,32,8,-12,32,20,12,8\}.$$
We have listed these values in the chart of Figure~\ref{dt35}, in which we write each value of $s$ as $5a+3b \mod 15$ for $-1\le a \le 1$ and $-2\le b \le 2$.  
\vskip.1in
\begin{figure}[h]
$$\begin{array}{c|c|c|c|c|c|} 
& b= -2 &b= -1 & b=0 & b= 1&b= 2 \\
\hline
 a=  1  & 32 &8 & \bf  20 &  8& 32 \\
\hline 
 a=0   &\bf   12&\bf   -12&\bf   0& \bf  -12 &\bf   12\\
\hline 
 a=  -1  & 32 &  8&  \bf 20& 8& 32\\
\hline
\end{array}$$
\caption{$30\,\bar{d}(S^3_{15}(T_{3,5}), 5a+3b)$}\label{dt35}
\end{figure}
\vskip.1in

Since $S^3_{15}(T_{3,5})$ is the connected sum of lens spaces, Theorem~\ref{obstructthm} predicts a pattern in the chart: each element should be the sum of the entries of its projection on the the main axes.  This is the case.  Notice for instance that the top right entry 32 in position $(a,b) = (1,2) \in \zz_3 \oplus \zz_5$ (which represents $1(5) +2(3) = 11 \in \zz_{15}$), is the sum of the entries in positions $(2,0)$ and $(0,1)$, 12 and 20, respectively. 
 
%%%%%%%SUBSECTION%%%%%%%%%%%%%%

\subsection{$\bf \bar{d}(S^3_{15}(T_{3,5} \cs D), \spinc)$.}  In order to compute the $\bar{d}$--invariants that are associated to surgery on the connect sum, we first must compute  $CFK^\infty$ for the connected sum of knots.  The complex $CFK^\infty(T_{2,3})$ is illustrated in Figure~\ref{hf23figure}, and it follows from~\cite{hedden} that, modulo acyclic subcomplexes, the homology of the double $D(T_{2,3})$ is the same.

\begin{figure}[h]
\fig{.3}{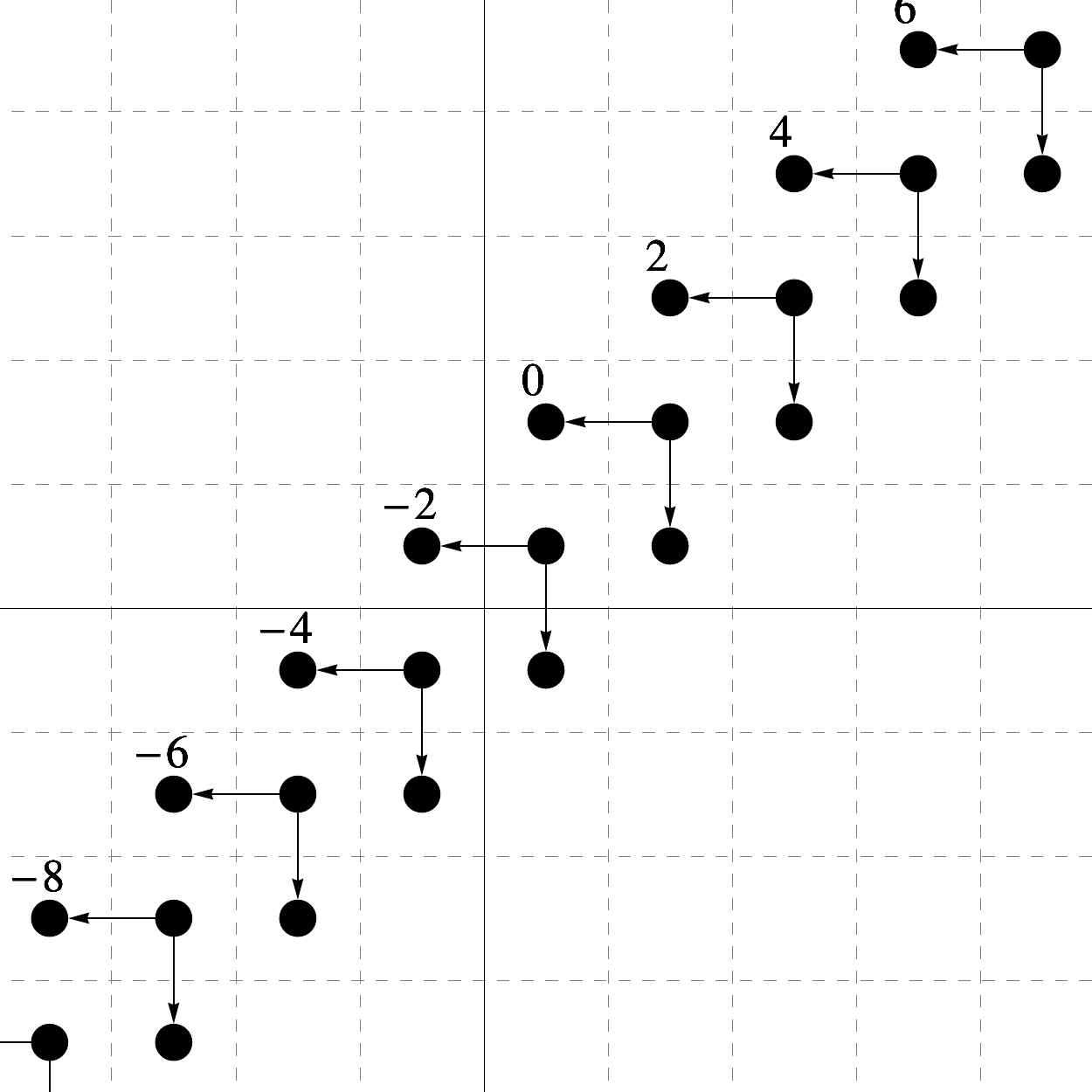}
\caption{}\label{hf23figure}
\end{figure}

At this point we need to analyze the tensor product,  $$C = CFK^\infty(T_{3,5}) \otimes_{\ff[U,U^{-1}]} CFK^\infty(T_{2,3}).$$ This complex is fairly complicated, containing 21 generators, but it is easily seen that it contains a subcomplex $C'$ as illustrated in Figure~\ref{hf35doublefigure}.  This subcomplex carries the homology of the overall complex, but does not contain all generators of a given grading.  However, it has the following property.  

\begin{theorem} The complex $C_{i<m, j<n}$ contains a generator of grading 0 if and only if $C'_{i<m, j<n}$ contains a generator of grading 0.  In particular, $d$--invariants for $C$ can be computed using $C'$.
\end{theorem}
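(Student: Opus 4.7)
My plan is to verify the statement by a direct coordinatewise comparison of grading-zero generators in $C$ and $C'$, exploiting the explicit staircase structure of $C'$.

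First, I would enumerate all $21$ generators of $C = CFK^\infty(T_{3,5}) \otimes_{\ff_2[U,U^{-1}]} CFK^\infty(T_{2,3})$: each has the form $a \otimes b$ with $a$ a generator from Figure~\ref{hf35-4.pdf} and $b$ a generator from Figure~\ref{hf23figure}, bifiltration given by coordinatewise addition, grading by summation, and differential by the Leibniz rule. From this enumeration I would identify which generators lie in the subcomplex $C'$ (pictured in Figure~\ref{hf35doublefigure}) and which do not.

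The main step is to verify a \emph{coordinatewise domination} property: for every grading-zero generator $x$ of $C$ at bifiltration $(i_x, j_x)$, there exists a grading-zero generator $x' \in C'$ at bifiltration $(i_{x'}, j_{x'})$ satisfying $i_{x'} \le i_x$ and $j_{x'} \le j_x$. This is a finite check (modulo $U$-equivariance), and I expect it to follow cleanly from the staircase shape of $C'$, which places its grading-zero generators at the bifiltration minima among all grading-zero generators of $C$ within the same bifiltration region.

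Given coordinatewise domination, the biconditional is immediate: if $C_{i<m,j<n}$ contains a grading-zero generator $x$, then the dominating $x' \in C'$ satisfies $i_{x'} \le i_x < m$ and $j_{x'} \le j_x < n$, so $x' \in C'_{i<m,j<n}$; the converse is trivial since $C' \subseteq C$. The ``in particular'' consequence for $d$-invariants follows because, after the grading shift from~\cite{os4}, each large-surgery $d$-invariant is detected by whether the appropriate bifiltration truncation contains a grading-zero generator. The main obstacle is the combinatorial bookkeeping: computing the Leibniz-rule differentials for all $21$ generators and verifying domination for the grading-zero generators outside $C'$.
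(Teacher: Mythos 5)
The paper states this theorem without giving a proof, so there is nothing to compare your route against; judged on its own, your outline has one genuine gap, and it is precisely in the ``in particular'' clause. You assert that each large-surgery $d$--invariant is detected by whether the truncation $C_{i<m,j<n}$ contains a grading-zero \emph{generator}. That is not the correct criterion: the $d$--invariant is read from homology, namely from whether the grading-zero generator of $H_*(C)\cong \ff_2[U,U^{-1}]$ (the class surviving in the image of $U^n$ for all $n$ in the quotient complex) admits a representative \emph{cycle} supported in the truncation. A grading-zero chain generator lying in the region need be neither a cycle nor a representative of that class --- the $21$--generator tensor product contains grading-zero generators belonging to acyclic ``box'' pieces, whose presence in a region says nothing about the tower --- and conversely a representative may a priori be a sum of several generators. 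So even after you verify coordinatewise domination, the biconditional you obtain is a statement about basis elements, and the passage to $d$--invariants does not yet follow.

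The gap is repairable in two ways, and either would complete your plan. (1) Strengthen the domination step: check that the dominating grading-zero generators of $C'$ can always be taken to be the staircase corners, which are cycles, pairwise homologous, and generate $H_*(C')\xrightarrow{\ \cong\ }H_*(C)$; then ``a grading-zero generator of $C$ lies in the region'' does imply ``the tower class is representable by a cycle of $C'$ in the region,'' and the homological criterion transfers both ways (the forward direction being trivial since $C'\subseteq C$ and $H_*(C')\to H_*(C)$ is injective). (2) More robustly, avoid generator bookkeeping altogether by exhibiting the bifiltration-preserving splitting $C\cong C'\oplus A$ with $A$ a direct sum of acyclic boxes; the projection $C\to C'$ is then a filtered chain map inducing the identity on homology, so it carries any representative cycle in $C_{i<m,j<n}$ to one in $C'_{i<m,j<n}$, which is exactly what the $d$--invariant computation needs. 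Finally, note that your domination property is only ``expected'' in the write-up; whichever repair you choose, the finite check (or the identification of the box summands) still has to be carried out explicitly.
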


\begin{figure}[h]
\fig{.5}{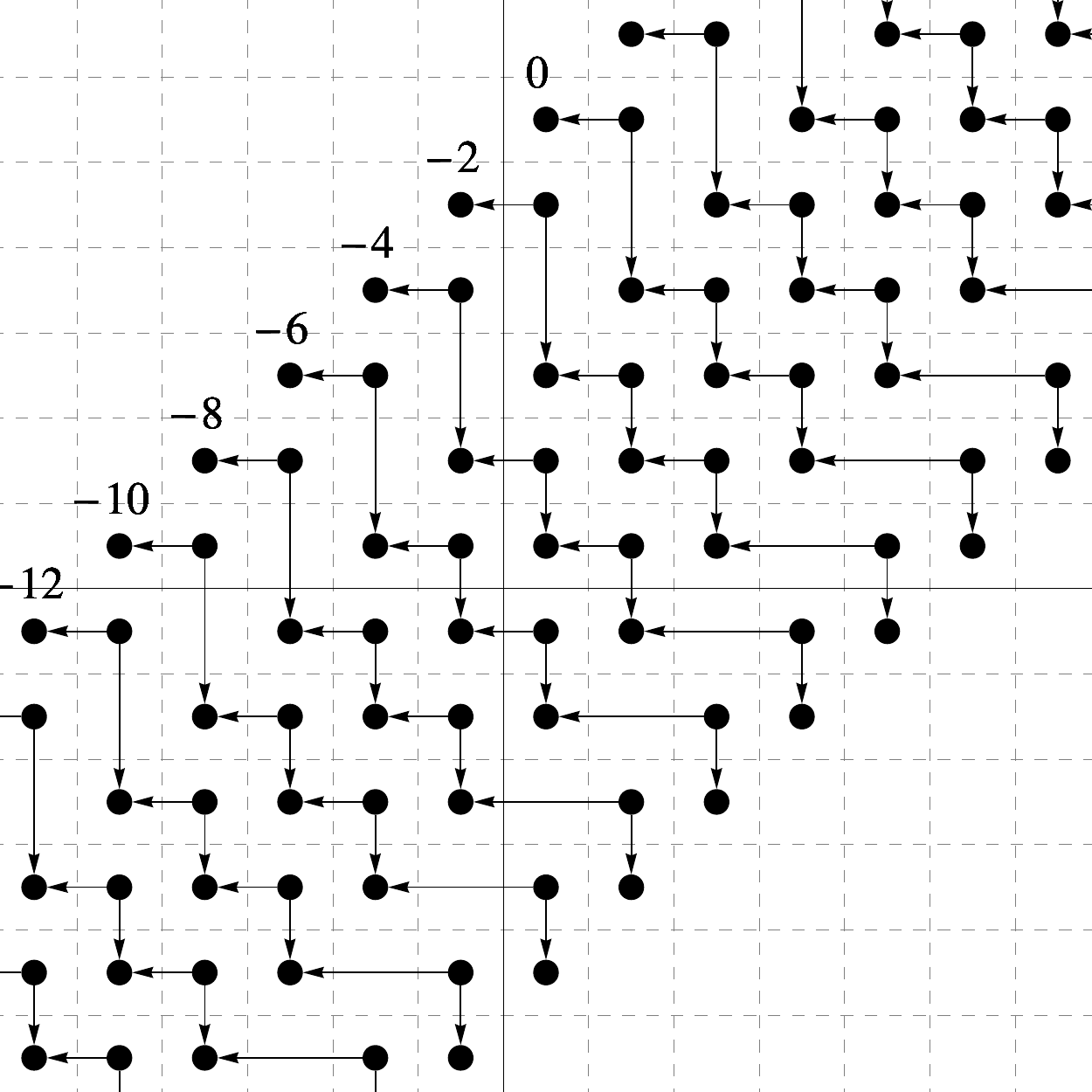}
\caption{}\label{hf35doublefigure}
\end{figure}

Using this diagram to compute the minimal gradings of classes in $$CFK^\infty(T_{3,5} \cs D ) / CFK^\infty(T_{3,5}\cs D)_{i <0, j<s} $$ for $-7 \le s \le 7$ we get the following:
$$\{ -14, -12, -10,-10,-8, -6, -6, -4, -4, -2, -2, -2, 0,0,0\}.$$  After shifting gradings by $-\eta$, the values are of the form $a_i/30$, where the $a_i$ are, in order, 
$$\{ -7, -3, 5, -43, -27, -7, -43, -15, -43, -7, -27, -43, 5, -3, -7\}.$$  To compute $\bar{d}$, we add $15/30$ to each term, yielding the values $b_i/30$, where the $b_i$ are:
$$\{ 8,12,20,-28,-12,8,-28,0,-28,8,-12,-28,20,12,8\}.$$ 
We can arrange these in a chart shown in Figure~\ref{dt35D}.
\begin{figure}[h]
$$\begin{array}{c|c|c|c|c|c|} 
& b= -2 &b= -1 & b=0 & b= 1&b= 2 \\
\hline
 a=  1  &  \underline{-28} &8 & \bf  20 &  8& \underline{-28}  \\
\hline 
 a=0   &\bf   12&\bf   -12&\bf   0& \bf  -12 &\bf   12\\
\hline 
 a=  -1  &   \underline{-28}  &  8&  \bf 20& 8&   \underline{-28} \\
\hline
\end{array}$$
\caption{$30\,\bar{d}(S^3_{15}(T_{3,5} \# D), 5a +3b)$}\label{dt35D}
\end{figure}
%\vskip.1in

Notice that the entries on the axes are  unchanged, but the underlined entries are no longer the sum of the values of the projections; that is, $-28 \ne 12 + 20$.  Thus, according to Theorem~\ref{obstructthm}, this manifold is not  $\qq$--homology cobordant to any manifold of the form $M_3 \cs M_5 \cs M_q$. 

%%%%%%%%%%SUBSECTION %%%%%%%%%

\subsection{Second Example}

As a second example  we consider the case of $S^3_{35}(T_{5,7})$ and $S^3_{35}(T_{5,7}\cs D)$  and illustrate the analogous charts as above (this time multiplied by 70 to clear denominators). The first chart, Figure~\ref{dt57} necessarily demonstrates additivity, the second, in Figure~\ref{dt57D},  upon examination does not.  This becomes more apparent by considering the third chart, in Figure~\ref{dt57Ddiff}, formed as the difference of the first two, but not multiplied by 70.  The underlined entries illustrate the failure of additivity.  Considering this difference is a simplifying approach of the general proof in the next section. 
%\vskip.3in
\begin{figure}
$$\begin{array}{c|c|c|c|c|c|c|c|} 
& b= -3 &b= -2 & b=-1 & b= 0&b= 1& b= 2&b= 3 \\
\hline
 a=  2  & -68 & -108 & -48 &\bf -28 & -48 & -108 & -68 \\
\hline 
 a=1   &-12 & -52 & 8 & \bf 28 & 8 & -52 & -12 \\
\hline 
 a=  0  &\bf -40 &\bf -80 &\bf -20 &\bf 0 &\bf -20 &\bf -80 &\bf -40 \\
\hline
 a=  -1  &-12 & -52 & 8 &\bf 28 & 8 & -52 & -12 \\
\hline
 a=  -2  & -68 & -108 & -48 &\bf -28 & -48 & -108 & -68\\
\hline
\end{array}$$
\caption{$70\,\bar{d}(S^3_{35}(T_{5,7}), 7a +5b)  $}\label{dt57}
\end{figure}

%\vskip.2in
%\vskip.3in

\begin{figure}
$$\begin{array}{c|c|c|c|c|c|c|c|} 
& b= -3 &b= -2 & b=-1 & b= 0&b= 1& b= 2&b= 3 \\
\hline
 a=  2  &\underline{72} & \underline{32} & 92 &\bf 112 & 92 & \underline{32} & \underline{72} \\
\hline 
 a=1   &128 & 88 & 8 &\bf 28 & 8 & 88 & 128 \\
\hline 
 a=  0  &\bf100 &\bf 60 &\bf -20 &\bf 0 &\bf -20 &\bf 60 &\bf 100\\
\hline
 a=  -1  &128 & 88 & 8 &\bf 28 & 8 & 88 & 128 \\
\hline
 a=  -2 &\underline{72} & \underline{32} & 92 &\bf 112 & 92 & \underline{32} & \underline{72}\\
\hline
\end{array}$$
\caption{$ 70\,\bar{d}(S^3_{35}(T_{5,7}\#D), 7a +5b)$}\label{dt57D}
\end{figure}

%\vskip.2in
%\vskip.3in

\begin{figure}
$$\begin{array}{c|c|c|c|c|c|c|c|} 
& b= -3 &b= -2 & b=-1 & b= 0&b= 1& b= 2&b= 3 \\
\hline
 a=  2  &\underline{2} & \underline{2} &2 &  2 & 2 & \underline{2} & \underline{2} \\
\hline 
 a=1   &2 & 2 &0 &  0 & 0 &2& 2 \\
\hline 
 a=  0  &  2 &  2 &  0 &  0 &  0 &  2 &  2\\
\hline
 a=  -1  &2 & 2&0 &  0 &0 & 2 & 2 \\
\hline
 a=  -2 &\underline{2} &\underline{2}&2&  2 & 2 & \underline{2} & \underline{2}\\
\hline
\end{array}$$
\caption{$\bar{d}(S^3_{35}(T_{5,7}\#D), 7a +5b) - \bar{d}(S^3_{35}(T_{5,7}), 7a +5b)$}\label{dt57Ddiff}
\end{figure}

%%%%%%%%%%SECTION %%%%%%%%%

\section{Topologically split examples, general case.}\label{secexamplestopsplit1}  
We now wish to generalize the examples of the previous section.  To do so, we begin by choosing an infinite set of integers $\{p_i\}$ with the following properties: (1) all $p_i$ are odd; (2)
the full set of integers   $\{p_i, p_i +2\}$ is pairwise relatively prime; and, (3) each $p_i$ and $p_i +2$ is square free.  The existence of such a set is demonstrated in Appendix~\ref{appendpi}, and throughout this section we assume all $p$ are selected from this set.
In the previous example we needed to track grading shifts.  It will simplify our discussion if we avoid dealing the grading shifts  as follows:  define $\tilde{d}(S^3_n(K), s) = d(S^3_n(K),s) +\eta$.  That is, $\tilde{d}$ is computed as is the $d$--invariant, except without the grading shift, the induced grading on \[
CFK^+(S^3_N(K),s)=CFK^\infty(S^3,K)/CFK^\infty(S^3,K)_{\{i<0,j<s\}}
\]
Since $p$ is odd, we can write $ p= 2n+1$ and let $q = p+2 = 2n +3$.  Our manifolds of interest are $S^3_{pq}(T_{p,q})$ and   $S^3_{pq}(T_{p,q} \cs D)$.  We collect here the results of a few elementary calculations.
\begin{theorem}\label{pqcalcs} $ \ $
\begin{enumerate}

\item The surgery coefficient is $$pq = 4n^2 + 8n +3.$$ \vskip.05in

\item The three-genus satisfies $$g(T_{p,q}) = 2n(n+1) = 2n^2+2n  \hskip.2in \text{and} \hskip.2in g(T_{p,q} \cs D)     = 2n^2+2n +1.$$\vskip.05in

\item \Spc--structures are parameterized by $s$, with $$-(2n^2 +4n+1)  \le s \le (2n^2 +4n+1).$$\vskip.05in

\item  Generators of $\widehat{CFK}(T_{p,q})$ have filtration level $j$, where $$-2n(n+1) \le j \le 2n(n+1).$$\vskip.05in
\end{enumerate}
\end{theorem}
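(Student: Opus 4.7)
The proof is essentially a bookkeeping exercise assembling four standard facts, so my plan is simply to verify each item in sequence, citing the relevant ingredient.

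For (1), I would just multiply out: $pq = (2n+1)(2n+3) = 4n^2 + 8n + 3$.

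For (2), I would apply the classical formula for the Seifert genus of a torus knot, $g(T_{a,b}) = (a-1)(b-1)/2$, which gives
\[
g(T_{p,q}) = \frac{(2n)(2n+2)}{2} = 2n(n+1) = 2n^2 + 2n.
\]
For the connected sum, the genus is additive, and $D = Wh(T_{2,3})$ is a Whitehead double, hence of Seifert genus $1$, so $g(T_{p,q} \cs D) = 2n^2 + 2n + 1$.

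For (3), I would use the standard identification $\text{Spin}^c(S^3_N(K)) \cong \zz/N$. With $N = pq = 4n^2 + 8n + 3$ odd, it is convenient to choose the symmetric range of representatives $-\tfrac{N-1}{2} \le s \le \tfrac{N-1}{2}$, and $(N-1)/2 = 2n^2 + 4n + 1$, which matches the stated bounds and gives exactly $pq$ values.

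For (4), I would appeal to the fact that the filtration levels appearing in $\widehat{CFK}(K)$ are supported in $[-g(K), g(K)]$, since the top filtration level carries a generator (it computes $\widehat{HF}$ of a fibered knot's monodromy in the fibered case, and the bound holds for any knot). Combined with (2), this gives the stated range $-2n(n+1) \le j \le 2n(n+1)$.

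None of these steps contains a real obstacle; the only thing that takes any thought is making sure the symmetric enumeration of \Spc--structures in (3) is matched to the conventions used in the remainder of the section (where $s$ must be paired with filtration levels in $\widehat{CFK}$ for the large-surgery formula). I would confirm this compatibility by noting that $(2n^2+4n+1) - 2n(n+1) = 2n+1 = p$, so the \Spc--structures near the extremes correspond precisely to the quotient complexes in which the relevant subcomplex $CFK^\infty_{i<0,j<s}$ begins to interact nontrivially with the filtration levels identified in (4), which is exactly what is needed for the $d$--invariant computations in the sequel.
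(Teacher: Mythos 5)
Your verification is correct: each item follows from the elementary facts you cite (multiplying out $pq$, the torus-knot genus formula $(p-1)(q-1)/2$ plus additivity of genus and $g(D)=1$, the identification of \Spc--structures on $S^3_{pq}(K)$ with $\zz/pq$ in its symmetric range since $pq$ is odd, and the support of $\widehat{\hfk}$ in Alexander gradings $[-g,g]$). The paper itself offers no proof, presenting the theorem as "the results of a few elementary calculations," and your write-up is exactly those calculations, so there is nothing to compare beyond noting agreement.
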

The main result of this section is the following.

\begin{theorem}\label{pqtorus}
$\bar{d}(S^3_{pq}(T_{p,q} \cs D), s)$ does not satisfy additivity as given in Theorem~\ref{obstructthm}.
\end{theorem}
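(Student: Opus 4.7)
The plan is to prove Theorem~\ref{pqtorus} by identifying, for arbitrary admissible $(p,q)$, a specific triple of \Spc--structures at which $\bar d(S^3_{pq}(T_{p,q}\cs D),\cdot)$ fails the additivity prediction of Theorem~\ref{obstructthm}. Since $S^3_{pq}(T_{p,q})$ is a connected sum of lens spaces, its $\bar d$--function \emph{does} satisfy the additive relation --- as was verified in the $(3,5)$ and $(5,7)$ computations of Section~\ref{secbasicexample} --- so the entire failure is concentrated in the correction term
\[
\Delta(s) \;=\; \bar d(S^3_{pq}(T_{p,q}\cs D),s)\, -\, \bar d(S^3_{pq}(T_{p,q}),s).
\]
Writing the isomorphism $\zz_{pq}\cong\zz_p\oplus\zz_q$ as $(a,b)\mapsto aq+bp$ and taking $(a,b)=((p-1)/2,(q-1)/2)=(n,n+1)$ --- the natural choice suggested by both worked examples --- the task becomes to show that $\Delta(nq+(n+1)p)-\Delta(nq)-\Delta((n+1)p)\ne 0$.

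First I would fix $CFK^\infty(T_{p,q})$ as the staircase complex determined by $\Delta_{T_{p,q}}(t)$, exactly as for $(p,q)=(3,5)$ in Figure~\ref{hf35-4.pdf}; torus knots being $L$--space knots, this complex is forced by the Alexander polynomial, whose relevant structure is supplied by Appendix~\ref{torusknotpoly}. Combining this with the reduced form of $CFK^\infty(D)\sim CFK^\infty(T_{2,3})$ of Figure~\ref{hf23figure}, I would identify the essential subcomplex $C'\subset CFK^\infty(T_{p,q})\otimes CFK^\infty(T_{2,3})$ generalizing Figure~\ref{hf35doublefigure}: the tensor product simply prepends and appends a single extra ``hook'' to the staircase, lengthening its support from $g=2n(n+1)$ to $g+1=2n^2+2n+1$. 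The subcomplex theorem stated just after Figure~\ref{hf35doublefigure} guarantees that $C'$ suffices for the computation of all $d$--invariants.

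Using the Ozsv\'ath--Szab\'o quotient
\[
CFK^+(S^3_{pq}(K),s) \;=\; CFK^\infty(S^3,K)\big/CFK^\infty(S^3,K)_{\{i<0,\,j<s\}}[-\eta_s],
\]
I would next compute $\tilde d$ for both surgeries in parallel. The key structural point --- visible in the difference chart Figure~\ref{dt57Ddiff} --- is that the extra hook perturbs only the minimal grading in quotients indexed by $s$ in a specific ``affected window,'' and changes $\tilde d(s)$ by exactly $-2$ there. Translated through $(a,b)\mapsto aq+bp$, it remains to verify two claims: that $s_0=nq+(n+1)p$ lies in the affected window, so that $\Delta(s_0)=\pm 2$ after renormalization against the Spin--structure value; and that the axis values $s_a=nq$ and $s_b=(n+1)p$ contribute corrections $\Delta(s_a),\Delta(s_b)\in\{0,\pm 2\}$ whose signed sum does not cancel $\Delta(s_0)$. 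Equivalently, I need to show that an \emph{odd} number of the three values $\Delta(s_0),\Delta(s_a),\Delta(s_b)$ are nonzero, matching the parity $(1,0,0)$ seen for $(3,5)$ and $(1,1,1)$ seen for $(5,7)$; Theorem~\ref{obstructthm} then yields the desired contradiction.

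The hard part will be pinning down the shape of the affected window for arbitrary $n$ --- specifically, determining for which $s$ the extra trefoil hook lowers $\tilde d$ by $2$. This amounts to a careful book\-keeping argument in the staircase $C'$: one must locate the critical cycles whose minimal surviving grading in $CFK^+$ changes when the hook is attached, and read off the corresponding \Spc--structures. Once this window is described in closed form in terms of $n$, $p$, and $q$, checking membership of the three \Spc--structures $s_0, s_a, s_b$ is elementary, and the two worked examples of Section~\ref{secbasicexample} already indicate that the outcome is uniformly $\Delta(s_0)-\Delta(s_a)-\Delta(s_b)=-2$, completing the proof.
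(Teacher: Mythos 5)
Your outline shares the paper's starting point (work with the difference of the two $\bar d$--functions, since $S^3_{pq}(T_{p,q})$ is additive), but the central step is missing: you explicitly defer ``pinning down the shape of the affected window for arbitrary $n$,'' and that determination is precisely the content of the theorem. Observing that the two worked examples give $\Delta(s_0)-\Delta(s_a)-\Delta(s_b)=-2$ is evidence, not an argument; nothing in the proposal shows that the extra hook changes $\tilde d$ by exactly $-2$ on a window containing $s_0$ but interacting with $s_a,s_b$ in the right way for all $n$. Moreover, your choice of test triple makes this harder than it needs to be: with $(a,b)=(n,n+1)$ one has $aq+bp=4n^2+6n+1\equiv -(2n+2)\pmod{pq}$, a \Spc--structure well inside the genus region, so evaluating $\Delta$ there requires knowing the staircase of $T_{p,p+2}$ (hence the Alexander polynomial coefficients) out to index roughly $2n+2$. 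Appendix~\ref{torusknotpoly} only controls the coefficients for $i\le (p-1)/2=n$, so the input you would need for your ``elementary check'' is not available from the paper, and would have to be proved separately.

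The paper avoids this by choosing the additivity instance coming from $(n+1)p-nq=1$. Then the two axis values $(n+1)p=2n^2+3n+1$ and $-nq=-(2n^2+3n)$ lie beyond the genera of both $T_{p,q}$ and $T_{p,q}\cs D$ (but within the \Spc--range), where the unshifted invariants $\tilde d$ of the two surgeries agree for trivial reasons, so those terms cancel identically. The whole question then reduces to comparing $\tilde d$ at $s=0$ and $s=1$, where the complexes near the diagonal are identified (via the $a_i=\pm1$ statement of Appendix~\ref{torusknotpoly}, which only concerns small $i$) with those of $T_{2,3}$ or $T_{2,5}$ according to the parity of $n$; a short computation gives a discrepancy of $2$ at exactly one of $s=0,1$, contradicting additivity. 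If you want to salvage your route, you would either need a closed form for the full Alexander polynomial of $T_{p,p+2}$ (to describe the window), or you should redirect your test triple to one whose off-axis entry sits at $s=\pm1$ and whose axis entries lie past the genus, which is exactly the paper's trick.
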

\begin{proof}
The space $S^3_{pq}(T_{p,q})$ satisfies the additive property as in Theorem~\ref{obstructthm}.  Suppose that  $S^3_{pq}(T_{p,q} \cs D)$ also satisfies  additivity property. Then the difference $\bar{d}(S^3_{pq}(T_{p,q}),(a,b))-\bar{d}(S^3_{pq}(T_{p,q} \cs D),(a,b))$ also satisfies the additivity property. We denote this difference by  $\bar{d}'(a,b)$ or $\bar{d}'(aq+bp)$. 
Note that it is unnecessary to add the grading shift $\eta$ to the amount we get from the diagram  when computing either of the values $\bar{d}(S^3_{pq}(T_{p,q}),(a,b))$ or $\bar{d}(S^3_{pq}(T_{p,q} \cs D),(a,b))$ since they have the same grading shift. Namely,
\begin{align*}
\bar{d}'(a,b) &= \tilde{d}(S^3_{pq}(T(p,q)),(a,b)) -\tilde{d}(S^3_{pq}(T_{p,q} \cs D),(a,b))\\
&-\tilde{d}(S^3_{pq}(T_{p,q}),0) +\tilde{d}(S^3_{pq}(T_{p,q} \cs D),0).
\end{align*}
From our choice of $p$ and $q$, we have $(n+1)p+(-n)q=1$. Thus, the additivity property implies the equality
\[
\bar{d}'(1)=\bar{d}'((n+1)p)+\bar{d}'(-nq),
\]
or equivalently,
\begin{align}
\tilde{d}(S^3_{pq}(T_{p,q}),1) &-\tilde{d}(S^3_{pq}(T_{p,q} \cs D),1) \nonumber \\
& =\tilde{d}(S^3_{pq}(T_{p,q}),(n+1)p) -\tilde{d}(S^3_{pq}(T_{p,q} \cs D),(n+1)p) \label{eqn:tilded}\\
&+\tilde{d}(S^3_{pq}(T_{p,q}),-nq) -\tilde{d}(S^3_{pq}(T_{p,q} \cs D),-nq) \nonumber \\
&-\tilde{d}(S^3_{pq}(T_{p,q}),0) +\tilde{d}(S^3_{pq}(T_{p,q} \cs D),0). \nonumber
\end{align}
Since $(n+1)p=2n^2+3n+1$ lies between the genus of  $T(p,q)$ (and of $T_{p,q} \cs D$) and the upper bound on the parameters for the \Spc--structures:
$$  2n^2 +2n +1 < 2n^2+3n +1 < 2n^2 +4n +1,$$
the values of the $\tilde{d}$--invariants are easily seen to be 0.
On the other hand, the number  $-nq$  is greater than the lower bound on the parameters for the \Spc--structures and less than the negative of the genus:
$$            -(2n^2 + 4n +1) < -(2n^2 +  3n) < -(2n^2  + 2n  + 1  )$$  and thus one sees that the $\tilde{d}$--invariants  take the same value $-2s = 2(2n^2+3n)$ for both $ T_{p,q}$ and $T_{p,q} \cs D$. 

Thus, in contradicting additivity, it remains to show that the equality $$ 
\tilde{d}(S^3_{pq}(T_{p,q}),1)  -\tilde{d}(S^3_{pq}(T_{p,q} \cs D),1) = 
-\tilde{d}(S^3_{pq}(T_{p,q}),0) +\tilde{d}(S^3_{pq}(T_{p,q} \cs D),0)$$ 
does not hold.

Now we will compute $\tilde{d}$ of both spaces for \Spc--structures $0$ and $1$.
Observe that within width 1 from the diagonal $j=i$, the complex $CFK^\infty(S^3,T_{p,q})$ looks like $CFK^\infty(S^3,T_{2,3})$ if $n$ is odd, or $CFK^\infty(S^3,T_{2,5})$ if $n$ is even.  This depends on the fact that near the origin the complex $CFK^\infty(S^3,K) $ looks like that of the $(2,k)$--torus knots.  In Appendix~\ref{torusknotpoly} we prove that the Alexander polynomial of $T_{p,p+2}$ is of the form   1+ $\sum_{i>0} a_i(t^{-i} + t^{i})$ where $a_i = \pm 1$ for $i\le (p-1)/2$.  As in the example of the previous section, this determines the ``zig-zag'' feature of the $CFK^\infty$ complex near the origin.  Tensoring with the trefoil complex does not alter this pattern.

The generators of the same grading $2l$ of $[x,-1,0]$ if $n$ is odd (or, $[x,0,0]$ if $n$ is even) lies above the anti-diagonal $i+j=-1$ (or, $i+j=0$). So, in order to compute $\tilde{d}(S^3_{pq}(T(p,q)),s)$ for $s=0,1$, we may assume in the computations that the complex we are considering is one of 
$$
\begin{cases}
CFK^\infty(S^3,T_{2,3}) & \text{if $n$ is odd}, \\
CFK^\infty(S^3,T_{2,5}) & \text{if $n$ is even}.
\end{cases}
$$
It is now easy to compute
$$
\tilde{d}(S^3_{pq}(T_{p,q}),s)=
\begin{array}{|c|c|c|} \hline
s & n \text{ odd} & n \text{ even} \\ \hline
1 & 2l+2 & 2l \\ \hline
0 & 2l & 2l \\ \hline
\end{array}.
$$
Near the diagonal $j=i$, the complex $CFK^\infty(S^3,T_{p,q} \cs D)$ looks like:
$$
\begin{cases}
CFK^\infty(S^3,T_{2,5}) & \text{if $n$ is odd}, \\
CFK^\infty(S^3,T_{2,3})[-2] & \text{if $n$ is even}.
\end{cases}
$$
The grading of $[x,-1,0]$ is $2l-2$ if $n$ is even and the grading of $[x,0,0]$ is $2l$ if $n$ is odd. Thus, we have
$$
\tilde{d}(S^3_{pq}(T_{p,q} \cs D),s)=
\begin{array}{|c|c|c|} \hline
s & n \text{ odd} & n \text{ even} \\ \hline
1 & 2l & 2l \\ \hline
0 & 2l & 2l-2 \\ \hline
\end{array}.
$$
We see that
$$
\tilde{d}(S^3_{pq}(T_{p,q}),s)-\tilde{d}(S^3_{pq}(T_{p,q} \cs D),s)=
\begin{array}{|c|c|c|} \hline
s & n \text{ odd} & n \text{ even} \\ \hline
1 & 2 & 0 \\ \hline
0 & 0 & 2 \\ \hline
\end{array}.
$$
This shows that (\ref{eqn:tilded}) cannot be satisfied. We conclude that the space $S^3_{pq}(T_{p,q} \cs D)$ does not satisfy the additive property of Theorem~\ref{obstructthm}.
\end{proof}
\subsection{The image of $\calk$ in $\Theta^3_\qq / \Phi(\oplus_{p \in \calp} \Theta^3_{\zz[1/p]})$ is infinite.}  This follows from the following result.

\begin{theorem}The spaces $N_{p,q} = S^3_{pq}(T_{p,q} \cs D) \cs -   S^3_{pq}(T_{p,q} ) \in \calk$ are distinct in the quotient $\Theta^3_\qq / \Phi(\oplus_{p \in \calp} \Theta^3_{\zz[1/p]})$.  
\end{theorem}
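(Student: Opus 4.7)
The plan is to adapt the obstruction argument of Theorem~\ref{obstructthm} to the spaces $N_{p,q}$, forcing an additivity condition on $\bar{d}$-invariants that is ruled out by the explicit Heegaard-Floer computation in Theorem~\ref{pqtorus}. Set $A = S^3_{pq}(T_{p,q} \cs D)$ and $B = S^3_{pq}(T_{p,q})$, so that $N_{p,q} = A \cs -B$ and the $p$-primary part of $H_1(N_{p,q})$ is $\zz_p^A \oplus \zz_p^B$, with the two summands coming from the $A$- and $-B$-factors. Suppose for contradiction that $N_{p,q} \cs -N_{p',q'} \in \Phi(\oplus_r \Theta^3_{\zz[1/r]})$ for some $(p,q) \ne (p',q')$ in the chosen index set; by the pairwise coprimality from the construction of the index set, $\gcd(pq, p'q') = 1$. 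Collecting terms by primary decomposition and absorbing $-N_{p',q'}$ together with the $\oplus_r \Theta^3_{\zz[1/r]}$-summands coprime to $pq$ into an auxiliary $M_*$, we obtain a rational homology $4$-ball $X$ with $\partial X = N_{p,q} \cs M_p \cs M_q \cs M_*$, where $|H_1(M_p)|$ divides a power of $p$, $|H_1(M_q)|$ divides a power of $q$, and $|H_1(M_*)|$ is coprime to $pq$.

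By Corollary~\ref{productthm}, the metabolizer for the $p$-primary linking form on $\zz_p^A \oplus \zz_p^B \oplus H_1(M_p)$ contains an element $u_p = (1, a^B, g_p)$; symmetrically there is $u_q = (1, b^B, g_q)$ in the $q$-primary metabolizer. Since $|H_1(A)|$ and $|H_1(B)|$ are odd, Theorem~\ref{lemmaextendspin} lets us fix a \Spc--structure on $X$ whose restriction to $N_{p,q}$ is the Spin structure. Following the four-term cancellation of Theorem~\ref{obstructthm}, sum the $d$-invariant vanishing identities for the metabolizer shifts $0$, $\alpha u_p$, $\beta u_q$, and $\alpha u_p + \beta u_q$ (with $\alpha \in \zz_p$, $\beta \in \zz_q$). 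The $M_p$, $M_q$, and $M_*$ contributions pair off and cancel, leaving
\[
\bar{d}(N_{p,q},(\alpha, \alpha a^B,\beta, \beta b^B)) - \bar{d}(N_{p,q},(\alpha, \alpha a^B,0,0)) - \bar{d}(N_{p,q},(0,0,\beta, \beta b^B)) = 0.
\]
Expanding via $\bar{d}(N_{p,q},\cdot) = \bar{d}(A,\cdot) - \bar{d}(B,\cdot)$ and using that $\bar{d}(B,\cdot)$ is additive on $\zz_p \oplus \zz_q$ (as noted in the proof of Theorem~\ref{pqtorus}, since $B$ is a connected sum of two lens spaces), the $B$-contribution collapses and the identity reduces to
\[
\delta_A(\alpha,\beta) := \bar{d}(A,(\alpha,\beta)) - \bar{d}(A,(\alpha,0)) - \bar{d}(A,(0,\beta)) = 0
\]
for every $\alpha \in \zz_p$ and $\beta \in \zz_q$.

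This contradicts the non-vanishing established inside the proof of Theorem~\ref{pqtorus}: under the identification $\zz_p \oplus \zz_q \cong \zz_{pq}$, $(a,b) \mapsto aq + bp$, the failed identity $\bar{d}'(1) = \bar{d}'((n+1)p) + \bar{d}'(-nq)$ translates directly into $\delta_A(-n, n+1) \ne 0$. The main obstacle I expect is the bookkeeping in the second step: one must verify that the ``$B$-coordinate drift'' produced by the metabolizer elements $u_p$ and $u_q$ (namely the $a^B$ and $b^B$ entries) is fully absorbed by the additivity of $\bar{d}(B,\cdot)$, rather than leaking into the relation on $\bar{d}(A,\cdot)$. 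Once this separation is arranged, the contradiction follows immediately from the Heegaard-Floer computation already carried out in Theorem~\ref{pqtorus}.
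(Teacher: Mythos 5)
Your proposal is correct and follows essentially the same route as the paper: reduce the assumed relation to a rational homology ball bounded by the surgery manifold together with $p$-, $q$-, and coprime pieces, run the metabolizer/\Spc/$d$-invariant four-term cancellation of Theorem~\ref{obstructthm}, and contradict the failure of additivity established in Theorem~\ref{pqtorus}. The only difference is cosmetic: the paper absorbs $-S^3_{pq}(T_{p,q})$ into the image of $\Phi$ at the outset (since it splits as a connected sum of lens spaces), whereas you carry it through the metabolizer bookkeeping and cancel its contribution at the end using additivity of $\bar{d}(S^3_{pq}(T_{p,q}),\cdot)$ --- the same splitting fact used in a slightly more laborious way.
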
 

\begin{proof} Observe that $S^3_{pq}(T_{p,q} \cs D) \cs -   S^3_{pq}(T_{p,q} ) \in \calk$, since the knots are topologically concordant.     We next  observe that these manifolds have the property that no linear combination with all coefficients $\pm 1$ is trivial in the quotient.  Suppose that some such linear combination was trivial.  Then focusing on any particular pair $(p,q)$, we would have that $S^3_{pq}(T_{p,q} \cs D) \# M_p \#M_q \# M_m = \partial X$ for a rational homology ball $X$, where the order of $M_p$ is  a product of prime factors of $p$, the order of $M_q$ is  a product of prime factors of $q$, and the order of $M_m$ is relatively prime to $pq$.  (This uses the fact that $S^3_{pq}(T_{p,q} ) $ does split as a connected sum.)

The existence of this connect sum decomposition implies the  additivity for $d$--invariants of  $S^3_{pq}(T_{p,q} \cs D)$ in a way that contradicts  Theorem~\ref{pqtorus}. 
\end{proof}

%%%%%%%SECTION%%%%%%%%%%%%%XXX
\section{Knot concordance}\label{concordance}
We denote by $\calc$ the classical smooth knot concordance group.  Levine~\cite{levine} defined the algebraic concordance group  $\calg$ and the rational algebraic concordance group, $\calg^\qq$.  He also defined a surjective homomorphism $\calc \to \calg$, proved that natural map $\calg \to \calg^{\qq}$ is injective, and proved that  $\calg^{\qq}$ is isomorphic to an infinite direct sum of groups isomorphic to $\zz, \zz_2$ and $\zz_4$.  He also proved that the image of $\calg$ in $\calg^\qq$ is isomorphic to a similar infinite direct sum.  
In~\cite{levine} it is observed that $\calg^\qq$ has a natural decomposition as a direct sum $\oplus \calg^\qq_{p(t)}$, where the $p(t)$ are symmetric irreducible rational polynomials.  We will not present the details here, but note that if the Alexander polynomial of $K$, $\Delta_K(t)$, is irreducible, then the image of $K$ in $\calg^\qq$ is in the $\calg^\qq_{\Delta(t)}$ summand.
Stoltzfus~\cite{stoltzfus} observed that the algebraic concordance group $\calg$ does not have a similar splitting.  Thus, there is not an immediate analog in concordance for the decompositions we have been studying for homology cobordism.  However, he did prove that in some cases such a splitting exists.  The following, Corollary 6.5 from~\cite{stoltzfus}, is stated in terms of knot concordance, but given the isomorphism of higher dimensional concordance and $\calg^\zz$, the same splitting theorem holds in the algebraic concordance group.

\begin{theorem} If $\Delta_K(t)$ factors as  $p(t)q(t)$ with $p(t)$ and $q(t)$ symmetric  and the resultant \emph{Res}$(p(t), q(t)) = 1$, then $K$ is concordant to a connected sum $K_1 \cs K_2$, with $\Delta_{K_1}(t) = p(t)$  and $\Delta_{K_2}(t) = q(t)$.
\end{theorem}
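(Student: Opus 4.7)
\emph{Strategy.} As emphasized in the preceding paragraph, the ``concordance'' in the statement is to be read in the sense of the algebraic knot concordance group $\calg$, or equivalently, by Kervaire--Levine, in the high-dimensional knot concordance group. Two knots are equivalent there precisely when their integer Seifert matrices are Witt equivalent in the sense of Levine, so the theorem reduces to producing a Witt equivalence $V \sim V_1 \oplus V_2$ with $\det(tV_i - V_i^T)$ equal to $p_i(t)$ up to a unit of $\zz\var$, and then realizing each $V_i$ as the Seifert matrix of a knot.

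\emph{The algebraic splitting.} Let $V$ be a $2g \times 2g$ Seifert matrix of $K$, and consider the Alexander module $A = \zz\var^{2g}/(tV - V^T)\zz\var^{2g}$ equipped with its Blanchfield pairing $\mathrm{Bl}\co A \times A \to \qq(t)/\zz\var$. The module $A$ is annihilated by $\det(tV-V^T) \doteq p(t)q(t)$. The hypothesis $\mathrm{Res}(p,q)=1$ says precisely that the ideal $(p(t),q(t)) \subset \zz[t]$ equals $\zz[t]$, so there is a Bezout identity $a(t)p(t) + b(t)q(t) = 1$ with $a,b \in \zz[t]$. Then $e_p = b(t)q(t)$ and $e_q = a(t)p(t)$ act on $A$ as orthogonal idempotents summing to $1$, giving an integral $\zz\var$-module decomposition $A = A_p \oplus A_q$ with $A_p = \ker(p \mid_A)$ and $A_q = \ker(q \mid_A)$. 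Because $p$ and $q$ are symmetric, any cross value $\mathrm{Bl}(x,y)$ with $x \in A_p$ and $y \in A_q$ is annihilated by both $p(t)$ and $q(t)$, hence by $1$, so the Blanchfield pairing splits orthogonally as well. Invoking the standard correspondence (Trotter, Kearton) between Witt classes of integer Seifert matrices and stable isometry classes of Blanchfield pairings, this orthogonal decomposition lifts to a Witt equivalence $V \sim V_1 \oplus V_2$ with $\det(tV_i - V_i^T) \doteq p_i(t)$. Levine's realization theorem now produces knots $K_1, K_2$ with Seifert matrices $V_1, V_2$; the Seifert matrix of $K_1 \cs K_2$ is Witt equivalent to $V$, yielding the claimed concordance in $\calg$.

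\emph{Main obstacle.} The entire weight of the proof lies in obtaining an \emph{integral} splitting of $A$ over $\zz\var$, as opposed to a merely rational splitting of $A \otimes \qq$. Over $\qq[t]$, any coprime factorization of the Alexander polynomial already produces a splitting, since $\qq[t]$ is a PID and the Chinese Remainder Theorem applies; but over $\zz[t]$ one needs an integer Bezout identity, and this is exactly the condition $\mathrm{Res}(p,q) = 1$. Without the resultant hypothesis the splitting survives only after tensoring with $\qq$, consistent with Stoltzfus's observation that $\calg$ itself admits no primary decomposition by Alexander polynomial while $\calg^\qq$ does; and one must also verify that the symmetry of $p$ and $q$ (which underwrites the orthogonality of the Blanchfield summands and the compatibility with the involution $t \mapsto t^{-1}$) is genuinely used in carrying the module-level splitting back down to the level of integer Seifert forms.
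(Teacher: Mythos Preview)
The paper does not prove this theorem at all: it is quoted verbatim as Corollary~6.5 of Stoltzfus~\cite{stoltzfus} and used as a black box, with the remark that ``given the isomorphism of higher dimensional concordance and $\calg^\zz$, the same splitting theorem holds in the algebraic concordance group.'' So there is no proof in the paper to compare your proposal against.

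That said, your sketch is essentially the argument Stoltzfus gives. The decisive step is exactly the one you isolate: the hypothesis $\mathrm{Res}(p,q)=1$ furnishes an \emph{integer} Bezout identity $a(t)p(t)+b(t)q(t)=1$, which produces orthogonal idempotents on the Alexander module over $\zz\var$ and hence an integral, Blanchfield-orthogonal splitting $A=A_p\oplus A_q$. Your identification of this as the main obstacle---the difference between the integral group $\calg$ and the rational group $\calg^\qq$, where coprimality alone suffices---is precisely the point the paper is making in citing Stoltzfus. The one place your outline is a bit compressed is the passage from a Blanchfield splitting back to a Witt-equivalence of integer Seifert forms: the Trotter--Kearton correspondence is between $S$-equivalence classes of Seifert matrices and isometry classes of Blanchfield pairings, so one needs a short additional argument (adding a metabolic summand, as Stoltzfus does) to realize $V_1\oplus V_2$ in the same $S$-equivalence class as $V$ rather than merely the same Witt class. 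This is routine but worth naming.
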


Here we observe that this result does not hold in dimension 3.\vskip.05in
\noindent{\bf Example.} Consider the ten  crossing knot $K = 10_{5}$.  It has Alexander polynomial $$\Delta = (1 - t + t^2) (1 - 2 t + 2 t^2 - t^3 + 2 t^4 - 2 t^5 + t^6).$$ These two factors are irreducible and have resultant 1. 

\begin{theorem} The knot $10_5$ is not concordant to any connected sum $K_1 \cs K_2$ where $\Delta_{K_1} = 1 - t + t^2  $ and $\Delta_{K_2}  =   1 - 2 t + 2 t^2 - t^3 + 2 t^4 - 2 t^5 + t^6$.
\end{theorem}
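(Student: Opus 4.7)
The plan is to translate the concordance problem into a $\qq$-homology cobordism problem for double branched covers and then invoke Theorem~\ref{obstructthm} in exactly the spirit of the lens space examples of Section~\ref{basiclensspace}. If $10_5$ were smoothly concordant to $K_1 \cs K_2$ with the prescribed Alexander polynomials, then the double branched covers would satisfy $\Sigma_2(10_5) \sim_\qq \Sigma_2(K_1) \cs \Sigma_2(K_2)$. Since $|\Delta_{K_1}(-1)|=3$ and $|\Delta_{K_2}(-1)|=11$, the respective first homologies are $\zz_3$ and $\zz_{11}$, so $\Sigma_2(10_5)$ would represent a class in the image of $\Theta^3_{\zz[1/3]} \oplus \Theta^3_{\zz[1/11]}$ under~$\Phi$.

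Because $H_1(\Sigma_2(10_5)) \cong \zz_{33} \cong \zz_3 \oplus \zz_{11}$ with $3$ and $11$ distinct odd square-free primes, Theorem~\ref{obstructthm} applies with $M = \Sigma_2(10_5)$, $m=3$, and $n=11$, and forces the additivity identity
\[
\bar{d}(\Sigma_2(10_5),(a,b)) \;=\; \bar{d}(\Sigma_2(10_5),(a,0)) + \bar{d}(\Sigma_2(10_5),(0,b))
\]
for every $(a,b) \in \zz_3 \oplus \zz_{11}$. The proof will be completed by exhibiting a single pair $(a,b)$ at which this equality fails, displayed in a $\bar{d}$--invariant chart analogous to Figures~\ref{dt35} and~\ref{dt35D}.

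The main obstacle is purely computational. The knot $10_5$ is not two-bridge — its Alexander polynomial $1 - 3t + 5t^2 - 5t^3 + 5t^4 - 5t^5 + 5t^6 - 3t^7 + t^8$ has coefficients of absolute value $5$ — so $\Sigma_2(10_5)$ is not a lens space, and the values of $\bar{d}$ cannot be read off directly from the recursion for $D(m,n,i)$ used in Theorem~\ref{lensspacethm}. I would instead present $\Sigma_2(10_5)$ as integer surgery on an explicit framed link obtained from a plat or Seifert--matrix presentation of $10_5$, and compute the required Heegaard--Floer $d$--invariants either from a resulting Heegaard diagram or by the surgery--formula (mapping cone) technology already deployed in Section~\ref{secbasicexample}. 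Once the full $3 \times 11$ table of $\bar{d}$--values is in hand, verifying that at least one non-axis entry disagrees with the sum of its axis projections is a finite arithmetic check, which produces the required contradiction.
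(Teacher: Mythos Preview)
Your overall strategy is exactly the paper's: pass to the double branched cover, observe that a concordance $10_5 \sim K_1 \cs K_2$ with $|\Delta_{K_1}(-1)|=3$ and $|\Delta_{K_2}(-1)|=11$ would force $\Sigma_2(10_5)$ to split in $\Theta^3_\qq$ as $M_3 \cs M_{11}$, apply Theorem~\ref{obstructthm}, and exhibit a failure of the additivity identity for $\bar d$.

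The one substantive error is your claim that $10_5$ is not two-bridge. The argument you give---that the Alexander polynomial has a coefficient of absolute value $5$---does not obstruct two-bridgeness; two-bridge knots can have arbitrarily large Alexander coefficients (already the twist knots have $\Delta = n t - (2n-1) + n t^{-1}$). In fact $10_5$ \emph{is} the two-bridge knot with fraction $33/13$, so $\Sigma_2(10_5) = L(33,13)$ is a lens space. Consequently the ``main obstacle'' you describe evaporates: the $d$--invariants come straight from the recursion $D(m,n,i)$ of Section~\ref{basiclensspace}, with no need for a surgery presentation, Heegaard diagram, or mapping-cone computation. The paper simply identifies the Spin--structure on $L(33,13)$ as $\spinc_6$ (the unique $\spinc_i$ whose $d$--value is not repeated), tabulates $33\,\bar d(L(33,13), 11a+3b)$ for $(a,b) \in \zz_3 \oplus \zz_{11}$, and reads off several entries (e.g.\ $(a,b)=(2,1)$) where $\bar d(a,b) \ne \bar d(a,0) + \bar d(0,b)$.
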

\begin{proof} The 2-fold branched cover of $K$ is the lens space $L(33,13)$.  If the desired concordance existed, then $L(33,13)$ would split in rational cobordism as a connected sum $M_3 \cs M_{11}$, with $H_1(M_3) = \zz_{3}$ and $H_1(M_{11}) = \zz_{11}$.  In order to compute the relevant $d$--invariants, one first identifies $\spinc_6$ as the Spin--structure $\spinc_*$ by computing that  the value of $d(L(33,13), \spinc_6)= 33$, a value that is not attained by any other \Spc--structure.  The values of the $d$--invariants, $d(L(33,13), (a,b)\cdot \spinc_*) - d(L(33,13),   \spinc_*)$ for $(a,b) \in \zz_3 \oplus \zz_{11}$ are given in the chart in Figure~\ref{dl33} (multiplied by 33 to clear denominators).

\begin{figure}[h]
$$\begin{array}{c|c|c|c|c|c|c|c|} 
& b= 0 &b=  1 & b=2 & b= 3&b= 4& b= 5  \\
\hline
 a=2   &\bf 22 &10 &40 & -20  & 28 & -14   \\
\hline 
 a=  0  &\bf 0 &\bf 54 &\bf 18 &\bf 24  &\bf 6 &\bf 30  \\
\hline
 a=  -1  & \bf 22 & 10 & 40 & 46 & 28 & 52   \\
\hline
\end{array}$$
\caption{$33\,d(L(33,13), 11a+3b) $}\label{dl33}
\end{figure}

%\vskip.1in
The next chart, in Figure~\ref{dl33diff}, presents the values 
\begin{align*}
\delta(L(33,13),(a,b)) &= d(L(33,13),(a,b)) - d(L(33,13),(a,0)) \\
 &- d(L(33,13),(0,b)) + d(L(33,13),(0,0)).
\end{align*}

\begin{figure}[h]
$$\begin{array}{c|c|c|c|c|c|c|c|} 
& b= 0 &b=  1 & b=2 & b= 3&b= 4& b= 5  \\
\hline
 a=2   & \bf 0 & 2 & 0 &  2  & 0 & 2   \\
\hline 
 a=  0  &\bf 0 &\bf 0 &\bf 0 &\bf 0  &\bf 0 &\bf 0  \\
\hline
 a=  -1  &\bf  0 & 2 &  0 &0 & 0 & 0   \\
\hline
\end{array}$$
\caption{$ \delta(L(33,13), (a,b)) $}\label{dl33diff}
\end{figure}
%\vskip.2in
The presence of the nonzero entries implies the nonsplittability of the manifold, as desired. 
\end{proof}

%\vskip.05in
\noindent{\bf Note.} In unpublished work~\cite{livingst} the second author constructed similar  but much more complicated  examples in the topological category.

%%%%%%%Section%%%%%%%%%%%%%%
\section{Topologically trivial bordism}\label{toptrivialbordism}

In~\cite{hlr} the quotient $\Theta^T_{\qq, spin} / \Theta^I_{\qq, spin}$ was studied.  Here, the cobordism group has been restricted to  spin 3--manifolds and spin bordisms  which have the rational homology of $S^3$.  The notation $\Theta^T_{\qq, spin}$ denotes the subgroup generated by representatives  which bound topological homology balls and $ \Theta^I_{\qq, spin}$ is generated by those that are cobordant to $\zz$--homology spheres.  (Note we have changed the notation from that of~\cite{hlr} to be consistent with the results of the current paper.  There is a similar result in~\cite{hlr} replacing $(\qq, spin)$, with $\zz_2$.  (Recall that every $\zz_2$ homology sphere is spin.)

Here we observe that Theorem~\ref{lemmaextendspin} permits us to generalize this result, eliminating the need to constrain the cobordism group to being spin or to use $\zz_2$ coefficients.  Let $\Theta^T_\qq$ denote the subgroup of $\Theta^3_\qq$ generated by rational homology spheres that are trivial in the topological rational cobordism group, that is, the kernel of $\calk$.

\begin{theorem}  The quotient group $\Theta^T_\qq / \Theta^3_\zz$ is infinitely generated. 
\end{theorem}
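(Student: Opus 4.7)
The plan is to reuse the infinite family of examples constructed in~\cite{hlr} and show that it remains infinitely generated in the unrestricted quotient $\Theta^T_\qq/\Theta^3_\zz$, using Theorem~\ref{lemmaextendspin} to eliminate the need for a Spin hypothesis on cobordisms.  Recall that~\cite{hlr} exhibits an infinite family $\{M_i\}$ of $\zz_2$--homology spheres, each bounding a topological rational homology 4--ball, and shows that these manifolds generate an infinitely generated subgroup of $\Theta^T_{\qq, spin}/\Theta^I_{\qq, spin}$ via a $d$--invariant obstruction applied to the unique Spin--structure $\spinc_0^{(i)}$ on each $M_i$.  Being a $\zz_2$--homology sphere forces $|H_1(M_i)|$ to be odd, which is precisely why this Spin--structure is unique and canonically lifts to a \Spc--structure.

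Suppose some finite linear combination $\#_i \epsilon_i M_i$ were trivial in $\Theta^T_\qq/\Theta^3_\zz$.  Then for some integer homology sphere $N$ and some smooth rational homology 4--ball $X$ we would have $\partial X = (\#_i \epsilon_i M_i) \# (-N)$.  The obstruction of~\cite{hlr} requires that each Spin--structure $\spinc_0^{(i)}$ lie in the image of the restriction map \Spc$(X) \to$ \Spc$(M_i)$; in~\cite{hlr} this was automatic because $X$ was assumed to be Spin.  In the present setting, $X$ need not be Spin, but Theorem~\ref{lemmaextendspin}, applied to the decomposition $\partial X = M_i \# (\text{rest})$ together with the odd--order hypothesis on $|H_1(M_i)|$, supplies precisely this conclusion for each index $i$.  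Note that $N \in \Theta^3_\zz$ is Spin and can safely be absorbed into the ``rest'' summand in each application.

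Applying Theorem~\ref{lemmaextendspin} in this fashion for each summand $M_i$, in a manner parallel to its use in the proof of Theorem~\ref{obstructthm} above, one recovers the full system of $d$--invariant additivity relations on which the~\cite{hlr} argument relies, now in the unrestricted smooth setting.  The subsequent $d$--invariant calculations that yield the contradiction are carried out explicitly in~\cite{hlr} and need not be reproduced here.  The only substantive technical obstacle in generalizing~\cite{hlr} was the Spin--structure extension issue, which Theorem~\ref{lemmaextendspin} resolves entirely; the remainder is a routine adaptation of the argument already present in the literature.
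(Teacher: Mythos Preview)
Your proposal is correct and follows essentially the same approach as the paper: both identify the Spin extension problem as the sole obstacle to porting the argument of~\cite{hlr} to the unrestricted rational setting, invoke Theorem~\ref{lemmaextendspin} (using that the $M_i$ have odd-order $H_1$) to resolve it, and then defer the remaining $d$--invariant computations to~\cite{hlr}. The only cosmetic difference is that the paper applies Theorem~\ref{lemmaextendspin} once to the full connected sum $N$ (whose $H_1$ is odd) rather than summand-by-summand, but this is immaterial.
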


We outline how the argument in~\cite{hlr} can be generalized.

In~\cite{hlr} there is a family of rational homology spheres constructed, $M_{p^2}$, for an infinite set of primes $p$.  These are constructed so that they  bound topological balls.  The proof of the theorem consists of showing  that no linear combination $N = \cs_i a_iM_{p_i^2} \cs M_0$ bounds a spin rational homology ball (or $\zz_2$ homology ball) $W$, where $M_0$ is  a $\zz$--homology sphere.  The existence of a unique Spin--structure was used to identify \Spc\  of the relevant manifolds with the second homology.

If all $p$ are odd, then there is a unique \Spc--structure on $N$ and according to Theorem~\ref{lemmaextendspin}, it is the restriction of a \Spc--structure on $W$.  
Given this, Proposition 2.1 of~\cite{hlr}, which required that $W$ be spin, continues to apply to identify the \Spc--structures on 
$N$ which extend to $W$ with a metabolizer of the linking form on   $H_1(N)$.  That identification is what is used to obstruct the existence of $W$ via $d$--invariants, as described in Thoerem 3.2 of~\cite{hlr}.  Thus, the remainder of the proof goes through as in that paper.

%%%%%%%APPENDIX%%%%%%%%%%%%%%
\appendix
\section{ Finding the $p_i$}\label{appendpi}
The proof of Theorem~\ref{pqtorus} requires a sequence of odd pairs $\{ p_i, p_i +2\}$ so that  the elements of the full set of $\{p_i\} \cup \{p_i +2\}$ are pairwise relatively prime and square free.  Since $p_i $ and $p_i +2$ are relatively prime, we need to choose the $p_i$ so that the set of all elements of $\{ p_i(p_i +2)\}$ are pairwise relatively prime and each element is square free.  If we let $p_i = n_i -1$, then $p_i(p_i+2) = n_i^2 -1$, and so we are seeking an infinite sequence of positive integers $\{n_i\}$ such that: % \marginpar{$n^2-1$ is not irreducible and the following theorem cannot be applied.}
\begin{enumerate}
\item  $n_i$ is even for all $i$.\vskip.05in
\item All elements of $\{n_i^2 -1\}$ are relatively prime.\vskip.05in

\item Each $n_i^2 -1 $ is square free.
\end{enumerate}
In Section~\ref{inftwotor} we need a sequence of integers $n_i$ such that $n_i = 0 \mod5$  with the property that the integers $20n_i^2 +8n_i +1$ are relatively prime and square free.
Here is a theorem that covers both cases.

\begin{theorem} Let $f(x)\in \zz[t]$ be an   quadratic polynomial with constant term 1 that is not the square of a linear polynomial.  Let $\alpha$  be a fixed integer and $s_n= \alpha n$ be an arithmetic sequence.   There exists an infinite set of $s_i$ such that values of $f(s_i)$ are pairwise relatively prime and square free. 
\end{theorem}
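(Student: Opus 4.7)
The plan is to build the sequence $\{s_i\}$ inductively. Given $s_1,\dots,s_k$ with $f(s_1),\dots,f(s_k)$ pairwise coprime and squarefree, I will produce $s_{k+1} = \alpha m_{k+1}$ such that $f(s_{k+1})$ is squarefree and coprime to every $f(s_i)$. Coprimality and squarefreeness will be handled separately and then combined through the Chinese Remainder Theorem.

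For coprimality, let $Q$ be the product of the distinct primes dividing some $f(s_i)$. For any prime $p \mid Q$, necessarily $p \nmid \alpha$: otherwise $f(\alpha m) \equiv f(0) = 1 \pmod{p}$ for every $m$, contradicting $p \mid f(\alpha m_i)$. Hence $f(\alpha x) \pmod{p}$ is a genuine quadratic with at most two roots in $\zz/p\zz$, so for every $p \mid Q$ with $p \geq 3$ there is a residue $r_p$ with $f(\alpha r_p) \not\equiv 0 \pmod{p}$; for $p = 2$ take $r_p = 0$, since $f(0) = 1$ is odd. Let $r$ be a CRT solution with $r \equiv r_p \pmod{p}$ for every $p \mid Q$. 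Then every $m \equiv r \pmod{Q}$ yields $f(\alpha m)$ coprime to $Q$.

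To obtain squarefreeness within this progression, I define $g(n) = f\bigl(\alpha(Qn + r)\bigr)$; this is a quadratic polynomial in $n$, and is squarefree as a polynomial because $f$ is not the square of a linear polynomial and $\alpha \neq 0$. I will appeal to the classical theorem that a squarefree polynomial $g \in \zz[n]$ of degree at most three with \emph{no fixed square divisor}---that is, such that for every prime $q$ there exists $n$ with $q^2 \nmid g(n)$---takes squarefree values on a set of $n$ of positive density. The substantive step is verifying the no-fixed-square-divisor hypothesis for $g$, which I split into three cases. For $p \mid Q$, since $p \mid Q$ one has $g(n) \equiv f(\alpha r) \not\equiv 0 \pmod{p}$ for all $n$, so a fortiori $p^2 \nmid g(n)$. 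For $q \nmid Q$ with $q \nmid \alpha$, the map $n \mapsto \alpha(Qn + r)$ is a bijection on $\zz/q^2\zz$, and choosing $n$ with $\alpha(Qn + r) \equiv 0 \pmod{q^2}$ gives $g(n) \equiv f(0) = 1 \pmod{q^2}$. For $q \mid \alpha$ with $q \nmid Q$, write $f(x) = 1 + ax + bx^2$ and $\alpha = q\alpha'$; a direct expansion shows $g(n) \equiv 1 + aq\alpha'(Qn + r) \pmod{q^2}$, which is $\equiv 1 \pmod{q}$, and therefore never $\equiv 0 \pmod{q^2}$.

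With the hypothesis verified, the squarefree theorem supplies infinitely many $n$ for which $g(n)$ is squarefree. Selecting any such $n$ and setting $s_{k+1} = \alpha(Qn + r)$ completes the inductive step. The main obstacle is the verification of the no-fixed-square-divisor condition, especially the third case above, where primes dividing $\alpha$ must be analyzed one power of $q$ beyond what the coprimality step requires; it is precisely the hypothesis $f(0) = 1$ that forces the relevant congruence to be nonzero.
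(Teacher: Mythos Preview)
Your proof is correct and follows the same inductive skeleton as the paper's: build the $s_i$ one at a time, and at each step apply the classical theorem on squarefree values of quadratics to a suitable auxiliary polynomial.  The paper's argument is considerably shorter because it exploits the hypothesis $f(0)=1$ more directly: with $P=\prod_{i<k} f(s_i)$ one simply takes $s_k=\alpha P n_k$, so that $f(s_k)\equiv f(0)=1\pmod p$ for every prime $p\mid P$ and coprimality is automatic---no CRT is needed.  Moreover, since $f(\alpha P n)$ again has constant term $1$, its coefficients have $\gcd$ equal to $1$, and for a quadratic this already precludes any fixed square divisor, so your three-case verification collapses to a single observation.  In effect your residue $r$ can always be chosen to be $0$, which recovers the paper's streamlined proof; your version would adapt to polynomials with other constant terms, but that generality is not needed here.  (One small remark: when you assert that $f(\alpha x)\bmod p$ is a ``genuine quadratic,'' the leading coefficient $b\alpha^2$ could vanish mod $p$ if $p\mid b$; however your conclusion that it has at most two roots is still valid, since the polynomial is nonzero mod $p$---indeed $f(0)=1$---and of degree at most two.)
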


\begin{proof}  It is known that if  $g(n)$ is a quadratic polynomial that is not a square of a linear polynomial and which has the  property that its coefficients have greatest common divisor one, then $g(n)$ is square free for an infinite set of $n$ (see, for example,~\cite{erdos}).  
We wish to construct the sequence of $s_i$ inductively.  To find $s_1$, let $f_1(n) = f(\alpha n)$, which is irreducible with constant term one.  Choose $n_1$ so that $f_1(n_1)$ is square free.  Let $s_1 = \alpha n_1$.
Assume that $s_i$ has been defined for $i < k$.  We  find $s_{k} $ with the desired properties as follows.   Let $P = \prod_{i=1}^{k-1} f(s_i)$.  Consider the function $f_k(n) = f(\alpha P n)$.  Again, this polynomial is irreducible with constant term one, so there exists an $n_k$ for which $f_k(n_k)$ is square free.  Since $f_k(n_k) = f(\alpha P n_k)$, we let $s_k = \alpha P n_k$.  Notice   that for each prime divisor $p$ of $P$, $f(\alpha P n) = 1 \mod p$, since evaluating $f$ at  $\alpha P n$ gives a quadratic polynomial  in $n$, with the quadratic term and linear term divisible by $P$ and the constant term one.  It follows that $f(s_k)$ is relatively prime to all $f(s_i), i < k$.
\end{proof}

%%%%%%%APPENDIX%%%%%%%%%%%%%%

\section{The Alexander polynomial of $T_{p,p+2}$.}\label{torusknotpoly}
Normalized to be symmetric, the Alexander polynomial of  a knot can be written in the form $\Delta_K(t) = a_0 +\sum_{i=1}^{n} a_i(t^{-i} + t^i)$, where $a_0 + 2\sum a_i = \pm 1$.  In Section~\ref{secexamplestopsplit1} we use the following fact.

\begin{theorem} If $K = T_{p,p+2}$ with $p$ odd then  $$\Delta_{T_{p,p+2}}(t) = a_0 +\sum_{i=1}^{(p^2-1)/2} a_i(t^{-i} + t^i),$$ where $a_i = \pm 1$  for $i \le (p-1)/2.$
\end{theorem}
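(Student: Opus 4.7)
The plan is to express $\Delta_{T_{p,q}}(t)$ via the Poincar\'e series of the numerical semigroup $S=\langle p,q\rangle$, specialize to $q=p+2$, and then read off the middle coefficients from the combinatorial structure of $S$. Starting from the classical identity
\[
\Delta_{T_{p,q}}(t)=\frac{(1-t)(1-t^{pq})}{(1-t^p)(1-t^q)},
\]
and using the standard fact that for $\gcd(p,q)=1$ one has $(1-t^{pq})/((1-t^p)(1-t^q))=P_S(t):=\sum_{n\in S}t^n$, one obtains $\Delta_{T_{p,q}}(t)=\sum_n c_n\,t^n$ with $c_n=\chi_S(n)-\chi_S(n-1)$. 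Thus the nonzero $c_n$ are exactly $\pm 1$, occurring precisely at the transitions between $S$ and its complement.

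Specializing to $q=p+2$ (where $\gcd(p,p+2)=1$ because $p$ is odd), every element of $S$ has the form $(a+b)p+2b$; setting $m=a+b$, the ``level-$m$'' contribution is the arithmetic progression $\{mp,\,mp+2,\,\ldots,\,mp+2m\}$. For $m\le(p-1)/2$ the maximum element $m(p+2)$ satisfies $m(p+2)<(m+1)p$ (equivalently $2m<p$), so every level-$m$ progression lies entirely in the single block $B_m=[mp,(m+1)p)$. It follows that for each $k\le(p-1)/2$,
\[
S\cap B_k=\{kp,\,kp+2,\,\ldots,\,kp+2k\},
\]
since levels $m>k$ start at $mp\ge(k+1)p$ and lower levels sit entirely below $kp$.

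I would then apply this to $k=(p-1)/2$, where $2k=p-1$: the semigroup occupies every even offset in $B_k$, so $\chi_S$ realizes the alternating pattern $1,0,1,0,\ldots,1$ across the $p$ positions of $B_k$. Moreover, the top element of level $k-1$ is $(k-1)p+2(k-1)=kp-3$, so $\chi_S(kp-1)=\chi_S(kp-2)=0$, and the alternation in $\chi_S$ starts cleanly. Consequently
\[
c_{kp+j}=\begin{cases}+1,&j\text{ even},\\ -1,&j\text{ odd,}\end{cases}\qquad 0\le j\le p-1,
\]
and every $c_n$ with $n\in B_{(p-1)/2}$ equals $\pm 1$.

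Finally, since $\deg\Delta_{T_{p,p+2}}=(p-1)(p+1)=p^2-1$, the palindromic normalization gives $a_i=c_{(p^2-1)/2+i}=c_{(p^2-1)/2-i}$. The range $0\le i\le(p-1)/2$ corresponds to indices $n\in[p(p-1)/2,(p-1)(p+2)/2]$, which is precisely $B_{(p-1)/2}$, so the preceding paragraph yields $a_i=\pm 1$ for $i\le(p-1)/2$. The only step requiring real care is the level-counting used to isolate $S\cap B_k$, and this reduces to the one-line inequality $2m<p$ for $m\le(p-1)/2$; apart from that the argument is bookkeeping, so I do not anticipate a substantive obstacle.
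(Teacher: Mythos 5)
Your argument is correct, and although it starts from the same identity $\Delta_{T_{p,q}}(t)=(1-t)(1-t^{pq})/((1-t^p)(1-t^q))$ that the paper uses, its key mechanism is genuinely different. The paper in effect ignores the numerator factor $1-t^{pq}$ and works with the representation counts $b_i=\#\{(x,y)\colon x,y\ge 0,\ xp+yq=i\}$; it then needs two lemmas to force the alternating $0,1$ pattern of $b_i$ just below the top degree: explicit representations of $g-j$ for $j\le (p-1)/2$ of the appropriate parity, and a uniqueness/adjacency lemma (no two distinct nonnegative solutions with targets $i,j\le g$, $|i-j|\le 1$), proved by bounding the Bezout solutions of $xp+yq=1$. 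You instead invoke the semigroup Hilbert-series identity $(1-t^{pq})/((1-t^p)(1-t^q))=\sum_{n\in S}t^n$ for $S=\langle p,q\rangle$, so the coefficients become differences of the indicator $\chi_S$ and the multiplicity question vanishes; the work is transferred to computing $S\cap[kp,(k+1)p)$ exactly via the level decomposition $ap+bq=(a+b)p+2b$, which for $k=(p-1)/2$ gives the full alternating pattern, with the observation that level $k-1$ tops out at $kp-3$ guaranteeing the clean start. Your window $[p(p-1)/2,(p-1)(p+2)/2]$ is indeed precisely the set of indices $n$ with $|n-g|\le(p-1)/2$, $g=(p^2-1)/2$, so the translation to the normalized coefficients is right. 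Comparing the two: the paper's route is self-contained, using nothing beyond Bezout; yours leans on a classical (and easily verified) semigroup fact but is cleaner, avoids the slightly delicate ``distinct solutions'' case analysis, and gives a bit more, namely the exact set $S$ below $p(p+1)/2$ and the precise sign pattern $c_{kp+j}=(-1)^j$ across the whole top block.
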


\noindent{\bf Note.} With more care, all the coefficients or $ \Delta_{T_{p,p+2}}(t)$ can be described in closed form.

\begin{proof}
As a polynomial (as opposed to the normalized Laurent polynomial) with nonzero constant term, the Alexander polynomial of $T_{p,q}$ is $(1-t^{pq})(1-t)/(1-t^p)(1-t^q)$.  Expanding each term of the denominator in a power series and noting that multiplying by the $t^{pq}$ term in the numerators does not affect terms of the product of degree less than $2g = (p-1)(q-1)$, the degree of the Alexander polynomial, we can focus on the expression:
$$(1-t) ( 1 + t^p + t^{2p} + t^{3p}\cdots)(1+t^q + t^{2q} +\cdots),$$ which we write as the product $$(1-t)\sum_{i=0}^{\infty} b_i t^i.$$  Here $b_i $ is the number of solutions to $xp + y q = i$, with $x, y \ge 0$.
In the case of interest, $q = p+2$ and the genus $g = (p^2-1)/2$.  We will now show  that for $i$ in the range $g- A \le i \le g$, the values $b_i$ are alternately 0 and 1, where $A$ is a constant to be determined.  Thus, using the fact that the Alexander polynomial is symmetric, upon multiplying by $(1-t)$ we have the coefficients of the Alexander polynomial are all $\pm 1$ near $t^g$.  
To show that the coefficients $b_i$ alternate between 0 and 1 for $g - A \le i \le g$, we first observe that in a given range of $i$, all $b_i \ge  1$ for $i$ even.   To see this, write $p= 2n+1$ and $q= 2n+3$; thus $g = 2n^2 + 2n$.  Consider the sum $$ \frac{n+j}{2}p +\frac{n-j}{2}q = 2n^2 +2n -j,$$ where $j$ is selected to have the same parity as $n$. (We require here that $j \le n$, that is, we need $A \le \frac{p-1}{2}$.)
To complete the argument, we next observe that the difference $|b_{i} - b_{j}| \le 1$ if $|i-j| \le 1$.  Suppose otherwise.   That is, suppose that there are {\it distinct} nonnegative solutions to equations:
$$ x p + y q = i$$
and $$x' p +y'q = j$$
with $x, y, x' , y' \ge 0$, $|i-j|\le 1$, and $i, j \le g$.
The conditions that $i\le g$ and $y \ge 0$ imply that $xp \le g = (pq - p -q -1)/2$, which imply that $x < (q-1)/2$.
We first consider the case that $i \ne j$.  After  possibly rordering, the difference would give 
$$(x- x') p + (y-y')q =1.$$
One solution to this equation is $$\frac{q-1}{2} p -\frac{p-1}{2}q = 1.$$ Every other solution is given by adding a multiple of $(-q,p)$ to the coefficient vector (note that  $-q(p) + p(q) = 0$ is a primitive solution since $p$ and $q$ are relatively prime).  Thus, the solutions with the smallest absolute values of the $x$--coordinate to the unital equation are the one above and 
$$-\frac{q+1}{2} p +\frac{p+1}{2} q =1.$$  That is, the smallest possible value for $(x-x')$ is $x-x' = \frac{q-1}{2}$.  But, since $x$ and $x'$ both are nonnegative and less than $\frac{q-1}{2}$, this is impossible. 
As an example, if $p = 21$ and $q=23$, (so $g = 220$) we have the solutions $$11(21) - 10(23)=1$$ and $$-12(21) + 11(23) =1.$$  with $g = 220$.  We also have $x(21) + y(23) \le 220$ which imply that $x \le 220 /21$, so $0 \le x \le 10$.  Similarly for $x'$, so it is not possible for $|x - x'| = 11$.
Finally, we consider the case $i = j$.  Thus, our coefficients would satisfy $$(x-x')p + (y-y')q =0.$$ This implies that $x - x' $ is a multiple of $q$.  But this would imply that they are equal, since under our assumptions, both are nonnegative and also $xp \le pq - p -q +1 \le pq$, so $x < q$ and $x' <q$.
\vskip.1in
In summary, if we write  the Alexander polynomial of the $T_{p,q}$ torus knot, with $q-p=2$ as $\pm 1$ as $a_0 + \sum_{i=1}^g  a_i(t^i +t^{-i})$, then for $i \le \frac{p-1}{2}$, we have shown that  $a_i = (-1)^i$.
\end{proof}
\vskip.1in

%%%%%%%BIBLIOGRAPHY%%%%%%%%%%%%%%

\vfill \eject

\end{document}